\theoremstyle{definition}
\newtheorem{thm}{Theorem}[section]
\newtheorem{lem}[thm]{Lemma}
\newtheorem{prp}[thm]{Proposition}
\newtheorem{dfn}[thm]{Definition}
\newtheorem{cor}[thm]{Corollary}
\newtheorem{cnj}[thm]{Conjecture}
\newtheorem{ntn}[thm]{Notation}
\newcommand{\bt}{\beta}
\newcommand{\gm}{\gamma}
\newcommand{\dt}{\delta}
\newcommand{\eps}{\varepsilon}
\newcommand{\zt}{\zeta}
\newcommand{\te}{\theta}
\newcommand{\sm}{\sigma}
\newcommand{\ph}{\varphi}
\newcommand{\om}{\omega}
\newcommand{\Z}{{\mathbb{Z}}}
\newcommand{\N}{{\mathbb{N}}}
\newcommand{\sint}{{\mathrm{int}}}
\newcommand{\diam}{{\mathrm{diam}}}
\newcommand{\dist}{{\mathrm{dist}}}
\newcommand{\supp}{{\mathrm{supp}}}
\newcommand{\card}{{\mathrm{card}}}
\newcommand{\invlim}{\displaystyle \lim_{\longleftarrow}}
\newcommand{\ts}[1]{{\textstyle{#1}}}
\newcommand{\set}[1]{\left\{#1\right\}}
\newcommand{\del}{\partial}
\newcommand{\JS}{\mathcal{Z}}
\title[Smallness and Comparison]{Smallness and Comparison 
Properties for Minimal Dynamical Systems}
\author{Julian Buck}
\address{Department of Mathematics, Francis Marion University, 
Florence, SC 29502}
\subjclass[2010]{Primary 37B05, Secondary 46L35.}
\date{27 June 2013}
\begin{document}

\begin{abstract}
We introduce the dynamic comparison property for minimal 
dynamical systems which has applications to the study of crossed 
product $C^{*}$-algebras. We demonstrate that this property holds 
for a large class of systems which includes all examples where the 
underlying space is finite-dimensional, as well as for an explicit 
infinite-dimensional example, showing that the it is strictly weaker 
than finite-dimensionality in general.
\end{abstract}

\maketitle 

\section{Introduction}\label{SectionIntro}

\indent

The role of topological dynamics in the classification program for 
nuclear $C^{*}$-algebras enjoys a rich and long history. 
Transformation group $C^{*}$-algebras associated to Cantor 
minimal systems were studied and classified in \cite{Put} and 
\cite{GPS}. The case for general finite-dimensional compact metric 
spaces was analyzed in \cite{QLinPhDiff}, \cite{HLinPh}, and 
\cite{TomsWinter}, where it is shown that the transformation group 
$C^{*}$-algebra $C^{*}(\Z,X,h)$ is classifiable so long as 
projections separate traces. In \cite{TomsWinter}, Toms and 
Winter additionally prove that $C^{*}(\Z,X,h)$ is stable under 
tensoring with the Jiang-Su algebra $\JS$, assuming only the 
finite-dimensionality of $X$.

Very little is known in the case where $X$ is an 
infinite-dimensional space. Giol and Kerr \cite{GK} have 
constructed examples of infinite-dimensional minimal dynamical 
systems such that the associated transformation group 
$C^{*}$-algebras have perforation in their K-theory and Cuntz 
semigroups (such examples lie outside the $\JS$-stable class). 
The existence of such minimal systems demonstrates the 
intractability of classifying $C^{*}$-algebras associated to 
general infinite-dimensional dynamical systems and indicates 
the need for regularity properties which rule out the type of 
behavior exhibited by their examples. In this paper we survey 
some existing regularity properties, and introduce a new one 
which holds for all finite-dimensional minimal systems as well 
as for at least some infinite-dimensional examples. Applications 
to crossed product $C^{*}$-algebras are referenced which will 
appear in subsequent papers.

We would like to thank N. Christopher Phillips for his numerous 
suggestions and insights, as much of it was completed at the 
University of Oregon under his supervision as part of the author's 
Ph.D. thesis. We would also like to thank Taylor Hines, David 
Kerr, and Ian Putnam for several helpful conversations and 
ideas related to aspects of this paper.

\section{Preliminaries}\label{SectionPreliminaries}

\begin{ntn}\label{MinDynSys}
Throughout, we let $X$ be an infinite compact metrizable space, 
and let $h \colon X \to X$ be a minimal homeomorphism. The 
corresponding minimal dynamical system $(X,h)$ will sometimes 
be denoted simply by $X$, with the homeomorphism $h$ 
understood. We denote by $M_{h}(X)$ the space of $h$-invariant 
Borel probability measures on $X$. Whenever necessary, it will be 
assumed that $X$ is a metric space with metric $d$. In this case, 
for $x \in X$ and $\eps > 0$, we will denote the $\eps$-ball 
centered at $x$ by 
\[
B(x,\eps) = \set{y \in X \colon d(x,y) < \eps}.
\]
We denote the boundary of a set $A \subset X$ by $\del A$. In 
particular, if $U \subset X$ is open then $\del U = \overline{U} 
\setminus U$, and if $C \subset X$ is closed then $\del C = C 
\setminus \sint (C)$.
\end{ntn}

\begin{dfn}\label{UnivNull}
A Borel set $E \subset X$ is called {\emph{universally null}} if 
$\mu (E) = 0$ for all $\mu \in M_{h}(X)$.
\end{dfn}

\begin{lem}\label{OpenPositiveMeas}
Let $(X,h)$ be as in Notation \ref{MinDynSys}. 
If $U \subset X$ is open and non-empty, then $\mu (U) > 0$ for all 
$\mu \in M_{h}(X)$. Moreover, if $C \subset X$ is closed and $\mu (C) 
< \mu (U)$ for all $\mu \in M_{h}(X)$, then $\inf_{\mu \in M_{h}(X)} 
[\mu (U) - \mu (C)] > 0$. In particular, $\inf_{\mu \in M_{h}(X)} (U) > 0$ 
for any non-empty open set $U \subset X$.
\end{lem}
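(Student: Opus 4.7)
The plan is to handle the three claims in order, with the first claim serving as the foundational estimate obtained from minimality and the remaining claims then following from a compactness argument on $M_{h}(X)$.

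For the first claim, I would use minimality to show that a non-empty open set $U$ has a uniform lower bound on its measure. Fix any $x \in X$; since the orbit $\set{h^{n}(x) : n \in \Z}$ is dense by minimality, the sets $\set{h^{-n}(U) : n \in \Z}$ form an open cover of $X$. By compactness of $X$, there exist finitely many integers $n_{1}, \dots, n_{k}$ with $X = \bigcup_{i=1}^{k} h^{-n_{i}}(U)$. For any $\mu \in M_{h}(X)$, invariance of $\mu$ under $h$ gives
\[
1 = \mu(X) \leq \ssum{i=1}^{k} \mu(h^{-n_{i}}(U)) = k \mu(U),
\]
so $\mu(U) \geq 1/k > 0$.

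For the second claim, the key observation is that $M_{h}(X)$ is compact in the weak-$*$ topology (it is a weak-$*$ closed subset of the compact set of Borel probability measures on $X$, and non-empty by the Krylov–Bogolyubov theorem), and that for open $U$ and closed $C$, the maps $\mu \mapsto \mu(U)$ and $\mu \mapsto -\mu(C)$ are each lower semicontinuous on $M_{h}(X)$. Thus $f(\mu) = \mu(U) - \mu(C)$ is lower semicontinuous and, by hypothesis, strictly positive on the compact set $M_{h}(X)$; so $f$ attains a strictly positive minimum, giving $\inf_{\mu \in M_{h}(X)} [\mu(U) - \mu(C)] > 0$. The third claim is then the special case $C = \emptyset$, combined with the first claim to ensure the hypothesis of the second is satisfied.

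The only point that requires a bit of care is the semicontinuity argument, since in the Portmanteau theorem the lower and upper semicontinuity of $\mu \mapsto \mu(U)$ and $\mu \mapsto \mu(C)$ respectively are usually stated for sequences; however, since $X$ is compact metrizable, $M_{h}(X)$ is metrizable in the weak-$*$ topology, so sequential and topological semicontinuity coincide and the infimum is attained. No other step presents a genuine obstacle.
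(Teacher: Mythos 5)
Your proof is correct and follows essentially the same overall strategy as the paper: establish positivity of $\mu(U)$ from minimality and $h$-invariance, then use weak-$*$ compactness of $M_h(X)$ together with lower semicontinuity of $\mu \mapsto \mu(U) - \mu(C)$ to pass from pointwise positivity to a positive infimum. The one place where you diverge, to your advantage, is the first claim: the paper writes $X = \bigcup_{n \in \Z} h^n(U)$ and derives a contradiction from $\mu(U)=0$ via countable subadditivity, whereas you extract a \emph{finite} subcover $X = \bigcup_{i=1}^{k} h^{-n_i}(U)$ by compactness and conclude $\mu(U) \geq 1/k$ directly. This gives a uniform lower bound independent of $\mu$, so your version of the first claim already delivers the third claim ($\inf_{\mu} \mu(U) > 0$) without any appeal to semicontinuity; the paper, by contrast, must route the third claim through the second. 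Your remark about metrizability of $M_h(X)$ (so sequential and topological semicontinuity coincide) is a legitimate and worthwhile point of care; the paper sidesteps it by citing Proposition 2.7 of \cite{ParToms} for the lower semicontinuity of $\mu \mapsto \mu(U)$ directly.
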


\begin{proof}
%

It is well-known that if $U \subset X$ is open and non-empty, then 
the minimality of $h$ implies that $X = \bigcup_{n=-\infty}^{\infty} 
h^{n}(U)$. Suppose that $\mu (U) = 0$ for some $\mu \in M_{h}(X)$. 
Then the $h$-invariance of $\mu$ implies that
\[
1 = \mu (X) = \mu \left( \bigcup_{n= -\infty}^{\infty} h^{n}
(U) \right)  \leq \sum_{n= -\infty}^{\infty} \mu (h^{n}(U)) =
\sum_{n= -\infty}^{\infty} \mu (U) = 0,
\]
a contradiction. Now, the map $\gm_{U} \colon M_{h}(X) \to 
[0,1]$ given by $\gm_{U}(\mu) = \mu (U)$ is a 
lower-semicontinuous function by Proposition 2.7 of 
\cite{ParToms}, and strictly positive by the above argument. 
By an entirely analogous argument as in the proof of 
Proposition 2.7 of \cite{ParToms}, if $C \subset X$ is closed 
the map $\gm_{C} \colon M_{h}(X) \to [0,1]$ given by $\gm_{C} 
(\mu) = \mu (C)$ is upper-semicontinuous. Define $\gm_{U,C} 
\colon M_{h}(X) \to [0,1]$ by $\gm_{U.C} (\mu) = \gm_{U} (\mu) 
- \gm_{C} (\mu) = \mu (U) - \mu (C)$. Since the sum of 
upper-semicontinuous functions is upper-semicontinuous, 
and the negative of a positive upper-semicontinuous function 
is lower-semicontinuous, it follows that $\gm_{U,C} = - 
([-\gm_{U}] + \gm_{C})$ is lower-semicontinuous. By 
assumption, $\gm_{U,C}$ is strictly positive. The compactness 
of $M_{h}(X)$ implies that $\gm_{U,C}$ achieves a lower bound 
$\dt > 0$ on $M_{h}(X)$. But then $\inf_{\mu \in M_{h}(X)} [\mu 
(U) - \mu (C)] \geq \dt > 0$, as claimed. The final observation 
follows immediately from the previous one by taking $C = 
\varnothing$.
\end{proof}

\begin{lem}\label{ArbSmallOpenSet}
Let $(X,h)$ be as in Notation \ref{MinDynSys}. 
For any $\eps > 0$ and any closed set $F \subset X$ with 
$F$ universally null, there is a non-empty 
open set $E \subset X$ such that $F \subset E$ and $\mu (E) 
< \eps$ for all $\mu \in M_{h}(X)$.
\end{lem}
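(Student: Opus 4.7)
The plan is to realize $E$ as an open $1/n$-neighborhood of $F$ for $n$ sufficiently large. Using the metric on $X$, set
\[
V_n = \set{x \in X : d(x,F) < 1/n},
\]
so each $V_n$ is open with $F \subset V_n$, and $\overline{V_n} \subset \set{x : d(x,F) \leq 1/n}$ by continuity of $x \mapsto d(x,F)$. Since $F$ is closed, $\bigcap_{n \geq 1} \overline{V_n} = F$. For every $\mu \in M_{h}(X)$, continuity of $\mu$ from above then yields $\mu(\overline{V_n}) \to \mu(F) = 0$ monotonically.

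To upgrade this pointwise vanishing to uniform vanishing over $M_{h}(X)$, I would invoke two ingredients: first, that each map $\gm_n \colon M_{h}(X) \to [0,1]$ defined by $\gm_n(\mu) = \mu(\overline{V_n})$ is upper-semicontinuous (this was established in passing during the proof of Lemma \ref{OpenPositiveMeas}); second, that $M_{h}(X)$ is compact in the weak-$*$ topology. Given $\eps > 0$, the sets
\[
K_n = \set{\mu \in M_{h}(X) : \gm_n(\mu) \geq \eps}
\]
are then closed in $M_{h}(X)$ and decreasing in $n$, with $\bigcap_n K_n = \varnothing$ by the previous paragraph. Compactness forces $K_N = \varnothing$ for some $N$, and taking $E = V_N$ produces an open set with $F \subset E$ and $\mu(E) \leq \mu(\overline{V_N}) < \eps$ for every $\mu \in M_{h}(X)$.

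The one remaining wrinkle is that $E$ must be non-empty. This is automatic whenever $F \neq \varnothing$. If $F = \varnothing$, pick any $x_0 \in X$ and run the argument with $V_n$ replaced by $B(x_0, 1/n)$; the singleton $\set{x_0}$ is universally null because minimality of $h$ on the infinite space $X$ precludes $h$-invariant atoms, so the identical Dini-type argument again delivers a non-empty open $V_N$ of universally small measure. The principal obstacle throughout is the passage from pointwise decay of $\gm_n$ to uniform decay, and the upper-semicontinuity from Lemma \ref{OpenPositiveMeas} combined with compactness of $M_{h}(X)$ supplies exactly this.
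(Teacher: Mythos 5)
Your proof is correct, and it takes a mildly but genuinely different technical route from the paper's. The paper sandwiches $F$ between the open neighborhoods $E_{n+1} \subset E_n$ using Urysohn functions $f_n$ with $f_n = 1$ on $\overline{E}_{n+1}$ and $\supp(f_n) \subset E_n$, then works with the induced weak-$*$ continuous affine maps $\widehat{f}_n(\mu) = \int f_n\,d\mu$; dominated convergence gives pointwise decay to $\widehat{f} = 0$, and Dini's theorem converts this to uniform decay on the compact set $M_h(X)$. You skip the bump functions entirely: you work directly with $\gm_n(\mu) = \mu(\overline{V_n})$, appeal to the upper-semicontinuity of $\mu \mapsto \mu(C)$ for closed $C$ (which the paper notes in passing in the proof of Lemma \ref{OpenPositiveMeas}, attributing it to the method of Proposition 2.7 of \cite{ParToms}), and then run a finite-intersection-property argument with the closed decreasing sets $K_n = \{\mu : \gm_n(\mu) \geq \eps\}$. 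The two uniformization steps — Dini for a decreasing sequence of continuous functions versus FIP for the closed superlevel sets of decreasing upper-semicontinuous functions — are two faces of the same compactness argument. What the paper's version buys is self-containedness: the continuity of $\widehat{f}_n$ is immediate from the definition of the weak-$*$ topology, whereas your version leans on the upper-semicontinuity of $\gm_n$, a fact the paper itself only cites rather than proves. What your version buys is brevity: no auxiliary $f_n$, no dominated convergence, just monotone continuity of the measure and compactness. Your handling of the $F = \varnothing$ wrinkle (needed to guarantee $E \neq \varnothing$) is also consistent with how the paper uses the lemma in Corollary \ref{MeasApproxZeroBdy}(\ref{SmallOpenInsideOpen}), where a singleton is substituted.
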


\begin{proof}
Define a sequence $(E_{n})_{n=0}^{\infty}$ of open sets by
$E_{n} = \set{x \in X \colon \dist (x,F) < 1/n}$. Then 
$\overline{E}_{n+1} \subset E_{n}$ for all $n \in \N$, and 
$\bigcap_{n=0}^{\infty} E_{n} = F$. Choose continuous functions 
$f_{n} \colon X \to [0,1]$ with $f_{n} = 1 $ on $\overline{E}_{n+1}$ 
and $\supp (f_{n}) \subset E_{n}$. Then $f_{n} \geq f_{n+1}$ for all 
$n \in \N$. Now each $f_{n}$ defines an affine function 
$\widehat{f}_{n}$ on $M_{h}(X)$
by
\[
\widehat{f}_{n}(\mu) = \int_{X} f_{n} \; d\mu.
\]
Applying the Dominated Convergence Theorem, we conclude that
\[
\lim_{n \to \infty} \widehat{f}_{n} (\mu) = \lim_{n \to \infty}
\int_{X} f_{n} \; d \mu = \int_{X} \lim_{n \to \infty} f_{n} \; d
\mu = \mu (F) = 0
\]
for all $\mu \in M_{h}(X)$. It follows that the monotone decreasing
sequence $(\widehat{f}_{n})_{n=1}^{\infty}$ of continuous functions 
converges pointwise to the continuous affine function $\widehat{f} = 
0$ on the compact set $M_{h}(X)$, and so Dini's Theorem implies that 
the convergence is uniform. Therefore, there is an $N \in \N$ such 
that $\widehat{f}_{N} (\mu) < \eps$ for all $\mu \in M_{h}(X)$. 
Finally, set $E = E_{N+1}$. Then $F \subset E$, and $f_{N} 
\vert_{\overline{E}} = 1$ implies that
\[
\mu (E) \leq \int_{X} f_{N} \; d \mu = \widehat{f}_{N} (\mu) <
\eps
\]
for all $\mu \in M_{h}(X)$.
\end{proof}

\begin{cor}\label{MeasApproxZeroBdy}
Let $(X,h)$ be as in Notation \ref{MinDynSys}.
\begin{enumerate}
\item\label{SmallOpenInsideOpen} For any $\eps > 0$ and any 
non-empty open set $U \subset X$, there is a non-empty open set $E 
\subset U$ such that $\mu (E) < \eps$ for all $\mu \in M_{h}(X)$.
\item\label{ClosedInsideOpen} For any $\eps > 0$ and any non-empty 
open set $U \subset X$ with $\del U$ universally null, there is a closed 
set $K \subset U$ with $\sint (K) \neq \varnothing$ such that $\mu 
(U \setminus K) < \eps$ for all $\mu \in M_{h}(X)$.
\item\label{OpenInsideOpen} For any $\eps > 0$ and any non-empty 
open set $U \subset X$ with $\del U$ universally null, there is an open 
set $E \subset U$ with $\overline{E} \subset U$ such that $\mu (U 
\setminus \overline{E}) < \eps$ for all $\mu \in M_{h}(X)$.
\item\label{OpenOutsideClosed} For any $\eps > 0$ and any closed set 
$K \subset X$ with $\del K$ universally null, there is an open set $E 
\subset X$ such that $K \subset E$ and $\mu (E \setminus K) < \eps$ 
for all $\mu \in M_{h}(X)$.
\end{enumerate}
\end{cor}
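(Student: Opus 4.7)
The strategy is to reduce each of the four parts to a single application of Lemma \ref{ArbSmallOpenSet} with an appropriately chosen closed, universally null set $F$. For part (1), fix any $x_{0} \in U$. Since $X$ is infinite and $h$ is minimal, the orbit of $x_{0}$ must be infinite (a finite orbit would be a non-empty proper closed $h$-invariant subset), and $h$-invariance gives $\mu(\set{h^{n}(x_{0})}) = \mu(\set{x_{0}})$ for every $n \in \Z$ and every $\mu \in M_{h}(X)$. Summing over the disjoint orbit points then forces $\mu(\set{x_{0}}) = 0$, so $\set{x_{0}}$ is universally null. Apply Lemma \ref{ArbSmallOpenSet} with $F = \set{x_{0}}$ to obtain an open set $E' \ni x_{0}$ with $\mu(E') < \eps$ for all $\mu$, and set $E = E' \cap U$.

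For parts (2) and (3), apply Lemma \ref{ArbSmallOpenSet} to $F = \del U$ to obtain an open $E_{0} \supset \del U$ with $\mu(E_{0}) < \eps$. For part (2), let $K = \overline{U} \setminus E_{0}$; this is closed, lies in $\overline{U} \setminus \del U = U$, and $\mu(U \setminus K) = \mu(U \cap E_{0}) < \eps$. To secure $\sint(K) \neq \varnothing$, fix $x_{0} \in U$ and $r > 0$ with $\overline{B(x_{0}, r)} \subset U$, so that $\overline{B(x_{0}, r)}$ and $\del U$ are disjoint compact sets at positive distance; then, by inspecting the construction inside the proof of Lemma \ref{ArbSmallOpenSet}, $E_{0}$ may be taken of the form $\set{x \colon \dist(x, \del U) < 1/(N+1)}$ with $N$ large enough that $E_{0} \cap \overline{B(x_{0}, r)} = \varnothing$, forcing $B(x_{0}, r) \subset \sint(K)$. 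For part (3), define $E = U \setminus \overline{E_{0}}$, which is open. Since $E$ is disjoint from the open set $E_{0}$, so is $\overline{E}$, whence $\overline{E} \cap \del U = \varnothing$ and $\overline{E} \subset \overline{U} \setminus \del U = U$. The estimate $\mu(U \setminus \overline{E}) \leq \mu(U \cap \overline{E_{0}}) \leq \mu(\overline{E_{0}})$ then requires the stronger bound $\mu(\overline{E_{0}}) < \eps$, which follows directly from the proof of Lemma \ref{ArbSmallOpenSet} since $f_{N} = 1$ on $\overline{E_{0}}$ gives $\mu(\overline{E_{0}}) \leq \widehat{f}_{N}(\mu) < \eps$.

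For part (4), apply Lemma \ref{ArbSmallOpenSet} to $F = \del K$ to produce an open $E_{0} \supset \del K$ with $\mu(E_{0}) < \eps$, and set $E = \sint(K) \cup E_{0}$. Then $E$ is open; since $K = \sint(K) \cup \del K$ and $\del K \subset E_{0}$, we have $K \subset E$; and $E \setminus K = E_{0} \setminus K \subset E_{0}$, so $\mu(E \setminus K) < \eps$. The principal technical obstacle I anticipate is in part (3): the statement of Lemma \ref{ArbSmallOpenSet} only yields $\mu(E_{0}) < \eps$, whereas the closure condition $\overline{E} \subset U$ forces us to control $\mu(\overline{E_{0}})$ instead. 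This is handled either by unpacking the lemma's proof as above, or, if one wishes to cite the lemma as a black box, by applying it with parameter $\eps/2$ and then invoking normality of $X$ to shrink to an intermediate open set whose closure still covers $\del U$ and has measure $< \eps$.
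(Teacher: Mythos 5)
Your proof is correct and follows essentially the same route as the paper: apply Lemma \ref{ArbSmallOpenSet} to the singleton $\set{x_{0}}$ for (1), to $\del U$ for (2)--(3), and to $\del K$ for (4), then form the required set by intersecting, subtracting, or adjoining the small open set. Your treatment of (1) is a small cleanup over the paper's (which replaces the $E_{n}$ in the lemma's proof by shrinking balls rather than simply intersecting the output of the lemma with $U$), and you correctly identify and handle the point where a black-box citation is insufficient --- needing $\mu(\overline{E}_{0}) < \eps$ in (3) and $\sint(K) \neq \varnothing$ in (2), both of which the paper likewise resolves by inspecting the lemma's construction (the paper instead derives (3) from (2) using that its $K$ is a regular closed set).
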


\begin{proof}
\begin{enumerate}
\item Fix any point $x_{0} \in U$, and let $F = \set{x_{0}}$, which 
is easily seen by minimality to satisfy $\mu (F) = 0$ for all $\mu \in 
M_{h}(X)$. Let $\dt > 0$ be such that $B(x_{0},\dt) \subset U$, and 
replace the sets $E_{n}$ in the proof of Lemma 
\ref{MeasApproxZeroBdy} by $E_{n} = B(x_{0},\dt/n)$. This ensures 
that the set $E = E_{N+1}$ satisfies $E \subset U$.
\item Let $F = \del U$, let $E = E_{N+1}$ be as in the proof of 
Lemma \ref{MeasApproxZeroBdy}, but with $N$ chosen so large 
that $X \setminus \overline{E} \neq \varnothing$, and set $K = 
\overline{U \cap (X \setminus \overline{E})}$. Note that $K \subset 
U$ since $\dist (X \setminus \overline{E}, X \setminus U) > 1/(N+1)$, 
and that $\sint(K) = U \cap (X \setminus \overline{E}) \neq \varnothing$. 
Finally, $U \setminus K \subset U \setminus (U \cap (X \setminus E) ) 
\subset E$ implies $\mu (U \setminus K) \leq \mu (E) < \eps$ for all 
$\mu \in M_{h}(X)$.
\item This is a convenient restatement of 
(\ref{ClosedInsideOpen}), with $\sint (K) = E$ and $K = \overline{E}$.
\item Let $F = \del K$, let $E$ be as in the proof of Lemma 
\ref{MeasApproxZeroBdy}, and set $U = K \cup E$. Then $U$ is open 
since $U = \sint (K) \cup \del (K) \cup E = \sint (K) \cup E$ (as $\del 
(K) \subset E$), and $\mu (U \setminus K) = \mu (E) < \eps$ for all 
$\mu \in M_{h}(X)$.
\end{enumerate}
\end{proof}

\section{Topologically Small and Thin Sets}

The following theorem is the well-known Rokhlin tower construction, 
where the space $X$ is decomposed in terms of a closed set $Y 
\subset X$ and the ``first return times to $Y$'' for the points of $X$. 
We show that a Rokhlin tower can be made compatible with some 
given partition of $X$ by sets with non-empty interior, in the sense 
that the interior of each level in the tower is contained in exactly 
one set of the partition.

\begin{thm}\label{RokhlinTower}
Let $(X,h)$ be as in Notation \ref{MinDynSys}. 
Let $Y \subset X$ be a closed set with $\sint (Y) \neq \varnothing$.
For $y \in Y$, define $r(y) = \min \set{m \geq 1 \colon h^{m}(y) \in 
Y}$. Then $\sup_{y \in Y} r(y) < \infty$, so there are finitely many 
distinct values $n(0) < n(1) < \cdots < n(l)$ in the range of $r$. 
For $0 \leq k \leq l$, set
\[
Y_{k} = \overline{ \set{ y \in Y \colon r(y) = n(k) } } 
\hspace{0.5 in} \textnormal{and} \hspace{0.5 in} 
Y_{k}^{\circ} = \sint (\set{ y \in Y \colon r(y) = n(k) }).
\]
Then:
\begin{enumerate}
\item the sets $h^{j}(Y_{k}^{\circ})$ are pairwise disjoint for $0 \leq
k \leq l$ and $0 \leq j \leq n(k) - 1$;
\item $\bigcup_{k=0}^{l} Y_{k} = Y$;
\item $\bigcup_{k=0}^{l} \bigcup_{j=0}^{n(k)-1} h^{j}(Y_{k}) = X$.
\end{enumerate}
Moreover, given any finite partition $\mathcal{P}$ of $X$
(consisting of sets with non-empty interior), there exist closed
sets $Z_{0},\ldots,Z_{m} \subset Y$ and non-negative integers $t(0) 
\leq t(1) \leq \cdots \leq t(m)$ such that with $Z_{k}^{(0)} = 
Z_{k} \setminus \del Z_{k}$ (which may be empty) for $0 \leq k \leq 
m$, we have:
\begin{enumerate}
\item the sets $h^{j}(Z_{k}^{(0)})$ are pairwise disjoint for $0
\leq k \leq m$ and $0 \leq j \leq t(k) - 1$;
\item $\bigcup_{k=0}^{m} Z_{k} = Y$;
\item $\bigcup_{k=0}^{m} \bigcup_{j=0}^{t(k)-1} h^{j}(Z_{k}) = X$;
\item for $0 \leq k \leq m$ and $0 \leq j \leq t(k) - 1$, the set
$h^{j}(Z_{k}^{(0)})$ is contained in exactly one $P \in 
\mathcal{P}$.
\end{enumerate}
\end{thm}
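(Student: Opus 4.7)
For the first (classical Rokhlin) half, the crux is bounding $r$ on $Y$. Since $\sint(Y) \neq \varnothing$, minimality gives the open cover $Y \subset \bigcup_{n \geq 1} h^{-n}(\sint(Y))$, and compactness of $Y$ extracts a finite subcover, so $r \leq N$ and the range of $r$ is finite with values $n(0) < \cdots < n(l)$. With $Y_k$ and $Y_k^{\circ}$ as defined, property (2) follows immediately from the partition $\{r = n(k)\}_k$ of $Y$ after passing to closures. For (1), a point $y \in Y_k^{\circ}$ has $r(y) = n(k)$ exactly, so $h^j(y) \notin Y$ for $1 \leq j \leq n(k)-1$; this rules out any collision $h^j(y) = h^{j'}(y')$ across distinct levels via $h^{-\min(j,j')}$. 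For (3), given $x \in X$, use minimality to pick the smallest $n \geq 0$ with $y := h^{-n}(x) \in Y$; minimality of $n$ forces $r(y) \geq n + 1$ when $x \notin Y$ (the case $x \in Y$ is handled by $n = 0$), placing $x$ at level $n \leq n(k(y)) - 1$ in the tower over $y$.

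For the refinement, enumerate $\mathcal{P} = \{P_1, \ldots, P_s\}$ and, for each admissible tuple $\tau = (k, i_0, \ldots, i_{n(k)-1})$, form the open subset of $Y$
\[
V_\tau = Y_k^{\circ} \cap \bigcap_{j=0}^{n(k)-1} h^{-j}(\sint(P_{i_j})),
\]
indexing points of $Y_k^{\circ}$ with orbit profile $\tau$. Only finitely many $V_\tau$ are non-empty; enumerate them as $V_{\tau_1}, \ldots, V_{\tau_M}$ in non-decreasing order of the associated return time $t(\nu) := n(k_{\tau_\nu})$, set $Z_\nu = \overline{V_{\tau_\nu}}$, and append finitely many auxiliary closed sets of empty $X$-interior (so $Z_\nu^{(0)} = \varnothing$) to cover the exceptional points of $Y$ whose orbit meets some $\del P_{i_j}$ or that lie in $\del\{r = n(k)\}$, securing (2). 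Property (3) is then inherited from Part 1, since boundary points are absorbed by the auxiliaries. For (1), the pairwise disjointness of the open sets $V_\tau$ implies $V_{\tau'} \cap \overline{V_\tau} = \varnothing$ and hence $\sint(\overline{V_\tau}) \cap \sint(\overline{V_{\tau'}}) = \varnothing$ for $\tau \neq \tau'$; combining this with the level-disjointness from Part 1 and the containment $Z_\nu^{(0)} \subset Y_{k_{\tau_\nu}}$ yields the required tower disjointness.

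The main technical obstacle is (4), which requires $h^j(\sint(\overline{V_{\tau_\nu}})) \subset P_{i_j}$ for each $j$. A point $y \in \sint(\overline{V_{\tau_\nu}})$ is a limit of some sequence $y_n \in V_{\tau_\nu}$, so continuity of $h^j$ yields only $h^j(y) \in \overline{\sint(P_{i_j})} \subset \overline{P_{i_j}}$. Promoting this to $h^j(y) \in P_{i_j}$ is delicate: it requires either a regularity assumption on the partition elements (for instance, each $P$ being closed or equal to the closure of its interior), or a careful argument isolating the ``excess'' $\sint(\overline{V_{\tau_\nu}}) \setminus V_{\tau_\nu}$ and pushing its image under each $h^j$ into the correct $P_{i_j}$, perhaps by absorbing the remaining boundary intersections into the empty-interior auxiliary sets constructed above. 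I expect this boundary analysis, and the bookkeeping needed to make the auxiliaries compatible with (1) and (3), to be the main work of the proof.
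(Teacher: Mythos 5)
Your approach is essentially the paper's: both refine the base of the Rokhlin tower by orbit profile. The paper intersects the sets $B_{j,P} = h^{-j}\bigl(h^{j}(Y_{k}) \cap P\bigr)$ and their relative complements in $Y_{k}$; you form $V_{\tau} = Y_{k}^{\circ} \cap \bigcap_{j} h^{-j}(\sint(P_{i_{j}}))$, which describes the same atoms. Both then pass to closures and sort by return time. One technical difference worth noting: the paper disjointifies sequentially via $Z_{0} = Z_{0}'$ and $Z_{k} = \overline{Z_{k}' \setminus \bigcup_{j<k} Z_{j}}$, which automatically produces a closed cover of $Y$ and keeps the return-time bookkeeping intact for property (3). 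You instead propose appending auxiliary closed sets of empty $X$-interior to absorb the exceptional set; but that exceptional set straddles several columns $Y_{k}$ with different return times, so the auxiliaries must themselves be subdivided by column and then rechecked against (3). The sequential disjointification sidesteps exactly this bookkeeping and is the cleaner route.

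More importantly, you have correctly located the one genuine subtlety, namely property (4). Your observation that a point $y \in \sint(\overline{V_{\tau}})$ only satisfies $h^{j}(y) \in \overline{\sint(P_{i_{j}})} \subset \overline{P_{i_{j}}}$, not necessarily $h^{j}(y) \in P_{i_{j}}$, is a real issue for an arbitrary finite partition into sets with non-empty interior. What you should know is that the paper's own proof does not address this either: after the disjointification it simply asserts that ``$Z_{0},\ldots,Z_{m}$ is a cover of $Y$ by closed sets with the desired properties'' with no argument at all for (4). So you have not overlooked an ingredient the paper supplies --- you have independently found where the paper's reasoning is thin. In the paper's only application of this theorem (the proof of Theorem \ref{TopSmallDCP}), the partition is $\{U_{0}, C, X \setminus (U_{0} \cup C)\}$ with $U_{0}$ open, $C$ closed, and both boundaries topologically $h$-small, hence nowhere dense; for such regular partition members the boundary excess you identify is nowhere dense and does not enter any $\sint(\overline{V_{\tau}})$, so (4) holds. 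Your instinct that either a regularity hypothesis on $\mathcal{P}$ or a boundary-absorption argument is needed is exactly right, and making that explicit would actually tighten the paper's exposition.
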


\begin{proof}
The finiteness of $r(y)$ and all statements concerning the sets
$Y_{k}$ are shown in \cite{QLinPh1}, \cite{QLinPh2}, and 
\cite{QLinPhDiff} (as well as other places). Now suppose we have 
a finite partition $\mathcal{P}$ of $X$ consisting ofsets with 
non-empty interior. For each $0 \leq k \leq l$, the set
\[
\mathcal{B}_{k} = \set{ h^{-j} \left( h^{j}(Y_{k}) \cap P \right) 
\colon 0 \leq j \leq n(k)-1, P \in \mathcal{P} }
\]
is a cover of $Y_{k}$ by a finite collection of sets with non-empty 
interior. Write $\mathcal{B}_{k} = \set{B_{1},\ldots,B_{N}}$ for an 
appropriate choice of $N \in \N$. Let $\mathcal{C}_{k}$ be the 
collection of all sets of the form $D = \bigcap_{i=1}^{m} C_{i}$, 
where each for each $i$, there is a $j \in \set{1,\ldots,N}$ such 
that either $C_{i} = B_{j}$ or $C_{i} = Y_{k} \setminus B_{j}$. Set 
$\mathcal{C}^{\circ} = \bigcup_{k=0}^{l} \mathcal{C}_{k}$ and 
$\mathcal{C} = \set{ \overline{D} \colon D \in \mathcal{C}^{\circ}}$, 
both of which are finite collections of sets. Write $\mathcal{C} = 
\set{Z_{0}',\ldots,Z_{m}'}$, and for $0 \leq i \leq 
m$, set $t(i) = n(k)$ where $Z_{i}' = \overline{D}$ and $D \in 
\mathcal{C}_{k}$. Without loss of generality, arrange the order of 
the sets $Z_{0}',\ldots,Z_{m}'$ so that $t(0) \leq t(1) \leq \cdots 
\leq t(m)$. Finally, define $Z_{k}$ and $Z_{k}^{(0)}$ for $0 \leq k 
\leq m$ by
\[
Z_{0} = Z_{0}', \hspace{0.5 in} Z_{k} = \overline{ Z_{k}'
\setminus \textstyle{\bigcup_{j=0}^{k-1} Z_{j} }}, \hspace{0.5 in} 
Z_{k}^{(0)} = Z_{k} \setminus \del Z_{k}.
\]
Then $Z_{0},\ldots,Z_{m}$ is a cover of $Y$ by closed sets with the 
desired properties.
\end{proof}

In applications of the Rokhlin tower construction to $C^{*}$-algebras, 
it is often technically important to have some control over the 
boundary $\del Y$ of the closed set $Y \subset X$ used as the base. 
In \cite{HLinPh} the sets employed are taken to have universally null 
boundaries, but this restriction will be too weak for our purposes. 
Instead, we need to insist the boundaries of the sets used be small 
in a more topological sense. In \cite{QLinPhDiff} this is accomplished 
by restricting to the situation where $X$ is a compact smooth 
manifold and $h$ is a minimal diffeomorphism, then requiring that 
$\del Y$ satisfy a certain transversality condition. Definition 
\ref{TopologicallySmall} that follows, which first appeared in 
\cite{PhTopSmall}, is an attempt to formulate an analogous property 
for the case of a more general compact metric space. For our 
purposes, we will often find it convenient to use another form of a 
smallness property for closed sets, which is given in Definition 
\ref{ThinSet}. The connection between these two definitions is 
considered in Proposition \ref{TopSmallIsThin}.

\begin{dfn}\label{TopologicallySmall}
Let $(X,h)$ be as in Notation \ref{MinDynSys}. A closed subset 
$F \subset X$ is said to be {\emph{topologically $h$-small}} if 
there is some $m \in \Z_{+}$ such that whenever $d(0), d(1), 
\ldots, d(m)$ are $m+1$ distinct elements of $\Z$, then 
$h^{d(0)}(F) \cap h^{d(1)}(F) \cap \cdots \cap h^{d(m)}(F) = 
\varnothing$. The smallest such constant $m$ is called the 
{\emph{topological smallness constant}}.
\end{dfn}

We assemble some basic results about topologically $h$-small 
sets.

\begin{lem}\label{TopSmallProps}
Let $(X,h)$ be as in Notation \ref{MinDynSys}.
\begin{enumerate}
\item\label{TopSmallBasic} Let $F \subset X$ be topologically 
$h$-small with topological smallness constant $m$, let $K \subset 
F$ be closed, and let $d \in \Z$. Then $K$ is topologically $h$-small 
with smallness constant at most $m$, and $h^{d}(F)$ is topologically 
$h$-small with smallness constant $m$.
\item\label{TopSmallIntersection} The intersection of arbitrarily 
many topologically small sets is topologically small.
\item\label{TopSmallUnion} Let $F_{1},\ldots,F_{n} \subset X$ be 
topologically $h$-small, where $F_{j}$ has topological smallness 
constant $m_{j}$ for $1 \leq j \leq n$. Then $F = \bigcup_{j=1}^{n} 
F_{j}$ is topologically $h$-small with smallness contant $m = 
\sum_{j=1}^{n} m_{j}$.
\end{enumerate}
\end{lem}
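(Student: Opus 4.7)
The plan is to prove each statement directly from Definition \ref{TopologicallySmall}, with parts (1) and (2) being essentially immediate and part (3) resting on a pigeonhole argument.

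For part (\ref{TopSmallBasic}), I would observe that for any distinct $d(0),\ldots,d(m) \in \Z$, the inclusion $K \subset F$ gives $h^{d(i)}(K) \subset h^{d(i)}(F)$ for each $i$, so
\[
\bigcap_{i=0}^{m} h^{d(i)}(K) \subset \bigcap_{i=0}^{m} h^{d(i)}(F) = \varnothing,
\]
yielding topological $h$-smallness for $K$ with some constant no larger than $m$. For $h^{d}(F)$, the key observation is that the shift $d(i) \mapsto d(i) + d$ is a bijection of $\Z$, so $\{d(0)+d,\ldots,d(m)+d\}$ is again a set of $m+1$ distinct integers, and
\[
\bigcap_{i=0}^{m} h^{d(i)}\bigl(h^{d}(F)\bigr) = \bigcap_{i=0}^{m} h^{d(i)+d}(F) = \varnothing.
\]
That $h^{d}(F)$ actually requires the full constant $m$ (not smaller) follows by applying the same observation with $h^{-d}$ in place of $h^{d}$.

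Part (\ref{TopSmallIntersection}) is then immediate: given any family $\{F_{\al}\}$ of topologically $h$-small closed sets, pick any one member $F_{\al_{0}}$ with smallness constant $m_{0}$. The intersection $\bigcap_{\al} F_{\al}$ is a closed subset of $F_{\al_{0}}$, and so is topologically $h$-small with constant at most $m_{0}$ by part (\ref{TopSmallBasic}).

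The main content is part (\ref{TopSmallUnion}), which I would handle by contradiction. Set $m = \sum_{j=1}^{n} m_{j}$, let $d(0),\ldots,d(m)$ be $m+1$ distinct integers, and suppose there exists $x \in \bigcap_{i=0}^{m} h^{d(i)}(F)$. For each $i$ pick $j(i) \in \{1,\ldots,n\}$ with $x \in h^{d(i)}(F_{j(i)})$, which is possible since $F = \bigcup_{j} F_{j}$. Now the $m+1$ indices $i$ are partitioned into $n$ classes according to the value of $j(i)$; if each class had cardinality at most $m_{j}$ for the corresponding $j$, we would have $m+1 \leq \sum_{j=1}^{n} m_{j} = m$, a contradiction. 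Hence there is some $j^{*}$ and indices $i_{0} < i_{1} < \cdots < i_{m_{j^{*}}}$ all mapping to $j^{*}$, giving $m_{j^{*}} + 1$ distinct integers $d(i_{0}),\ldots,d(i_{m_{j^{*}}})$ with
\[
x \in \bigcap_{k=0}^{m_{j^{*}}} h^{d(i_{k})}(F_{j^{*}}),
\]
contradicting topological $h$-smallness of $F_{j^{*}}$ with constant $m_{j^{*}}$. The only point that requires any care is the pigeonhole bookkeeping with non-uniform capacities $m_{j}$, but this is a short counting step rather than a genuine obstacle.
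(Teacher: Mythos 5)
Your proof is correct and follows essentially the same approach as the paper. Parts (1) and (2) are immediate exactly as you say, and your part (3) is the same pigeonhole argument the paper uses, merely phrased by contradiction (picking a specific assignment $i \mapsto j(i)$ of each translate index to a summand) rather than the paper's direct decomposition $\bigcap_{i=0}^{m} h^{d(i)}(F) = \bigcup_{s} \bigcap_{i=0}^{m} h^{d(i)}(F_{s(i)})$ over all functions $s\colon\{0,\ldots,m\}\to\{1,\ldots,n\}$; the two are equivalent.
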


\begin{proof}
\begin{enumerate}
\item This is immediate from the definition.
\item This follows immediately from (\ref{TopSmallBasic}), since 
the intersection of arbitrarily many closed sets is a closed subset 
of any one of them.
\item With $m = \sum_{j=1}^{n} m_{j}$, let $d(0),\ldots,d(m) \in \Z$ 
be $m+1$ distinct integers. Let $\mathcal{S}$ be the collection 
of all functions $s \colon \set{0,\ldots,m} \to \set{1,\ldots,n}$. By the 
pigeonhole principle, for any $s \in \mathcal{S}$ there is a $j \in 
\set{1,\ldots,n}$ such that $\card (s^{-1}(j)) > m_{j}$. Since $F_{j}$ 
has topological smallness constant $m_{j}$, it follows that 
$\bigcap_{i \in s^{-1}(j)} h^{i}(F_{j}) = \varnothing$. This implies 
that 
\[
\ts{ \bigcap_{j=0}^{m} h^{d(j)}(F_{s(j)}) \subset \bigcap_{i \in 
s^{-1}(j)} h^{i}(F_{j}) = \varnothing,}
\]
which gives 
\[
\ts{ \bigcap_{j=0}^{m} h^{d(j)} (F) = \bigcup_{s \in \mathcal{S}} 
\bigcap_{j=0}^{m} h^{d(j)} (F_{s(j)}) = \varnothing,}
\]
as required.
\end{enumerate}
\end{proof}

\begin{dfn}\label{ThinSet}
Let $(X,h)$ be as in Notation \ref{MinDynSys}. 
Let $F \subset X$ be closed and let $U \subset X$ be open. We write
$F \prec U$ if there exist $M \in \N$, $U_{0},\ldots,U_{M} \subset X$
open, and $d(0),\ldots,d(M) \in \Z$ such that:
\begin{enumerate}
\item $F \subset \bigcup_{j=0}^{M} U_{j}$;
\item $h^{d(j)}(U_{j}) \subset U$ for $0 \leq j \leq M$;
\item the sets $h^{d(j)}(U_{j})$ are pairwise disjoint for $0 \leq j
\leq M$.
\end{enumerate}
We say the closed set $F$ is {\emph{thin}} if $F \prec U$ for every
non-empty open set $U \subset X$.
\end{dfn}

It is clear that any closed subset of a thin set is thin, and hence 
the intersection of arbitrarily many thin sets is thin. It is also 
clear that if $F$ is thin, then so is $h^{n}(F)$ for any $n \in \Z$.

\begin{lem}\label{ThinHasThinNbd}
Let $(X,h)$ be as in Notation \ref{MinDynSys}. 
Suppose that $F \subset X$ is closed and $U \subset X$ is open with
$F \prec U$. Then there is an open set $V \subset X$ such that $F
\subset V$ and $\overline{V} \prec U$.
\end{lem}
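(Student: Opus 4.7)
The plan is to use normality of the ambient space $X$ (which is compact metrizable, hence normal) to produce an open set $V$ sandwiched between $F$ and the union of the witnessing open sets for $F \prec U$, and then to observe that the same witnesses from the original decomposition carry over verbatim to give $\overline{V} \prec U$.

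In more detail, I would first unpack the hypothesis $F \prec U$ using Definition \ref{ThinSet} to obtain $M \in \N$, open sets $U_{0}, \ldots, U_{M} \subset X$, and integers $d(0), \ldots, d(M) \in \Z$ such that $F \subset \bigcup_{j=0}^{M} U_{j}$, $h^{d(j)}(U_{j}) \subset U$ for each $j$, and the sets $h^{d(j)}(U_{j})$ are pairwise disjoint. Set $W = \bigcup_{j=0}^{M} U_{j}$, which is open and contains the closed set $F$. Since $X$ is a compact metrizable space, it is normal, so there exists an open set $V \subset X$ with $F \subset V \subset \overline{V} \subset W$.

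I then claim that $V$ is the desired open set. Clearly $F \subset V$, so it remains to verify $\overline{V} \prec U$. I would do this using exactly the same data $U_{0}, \ldots, U_{M}$ and $d(0), \ldots, d(M)$ as witnesses. Condition (1) of Definition \ref{ThinSet} for $\overline{V} \prec U$ says $\overline{V} \subset \bigcup_{j=0}^{M} U_{j}$, which holds by construction since $\overline{V} \subset W$. Conditions (2) and (3), namely $h^{d(j)}(U_{j}) \subset U$ and pairwise disjointness of the sets $h^{d(j)}(U_{j})$, are inherited directly from the original relation $F \prec U$ since no change was made to the $U_{j}$ or the $d(j)$.

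There is essentially no obstacle here beyond the invocation of normality; the main observation is simply that the witnesses of $F \prec U$ depend only on an open neighborhood of $F$ rather than on $F$ itself, so any closed set sitting inside $W = \bigcup_{j} U_{j}$ inherits the same witnesses. The shrinking from $W$ down to a closure $\overline{V} \subset W$ is a standard consequence of normality for closed sets inside open sets.
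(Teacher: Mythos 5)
Your proof is correct and follows essentially the same route as the paper: extract the witnessing data from $F \prec U$, sandwich a closed neighborhood $\overline{V}$ of $F$ inside $\bigcup_{j} U_{j}$, and reuse the same witnesses. The only cosmetic difference is that you invoke normality of $X$ to produce $V$, whereas the paper cites local compactness and the Hausdorff property; for a compact metrizable $X$ these both yield the required sandwich immediately.
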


\begin{proof}
Since $F \prec U$, there exist $M \in \N$, $U_{0},\ldots,U_{M} 
\subset X$ open, and $d(0),\ldots,d(M) \in \Z$ such that $F \subset
\bigcup_{j=0}^{M} U_{j}$ and such that the sets $h^{-d(j)}(U_{j})$
are pairwise disjoint subsets of $U$. Let $E = \bigcup_{j=0}^{M}
U_{j}$, and use $X$ locally compact Hausdorff to choose an open set 
$V$ with $\overline{V}$ compact satisfying $F \subset V \subset 
\overline{V} \subset E$. Then $\overline{V} \prec U$ using the same 
open sets $U_{j}$ and integers $d(j)$ as for $F$.
\end{proof}

\begin{lem}\label{ThinHasMeasZero}
Let $(X,h)$ be as in Notation \ref{MinDynSys}. If $F \subset X$ is thin, 
then $F$ is universally null.
\end{lem}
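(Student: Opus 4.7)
The plan is to fix $\mu \in M_h(X)$ and show $\mu(F) < \eps$ for arbitrary $\eps > 0$, from which $\mu(F) = 0$ follows. The engine will be the thinness of $F$ together with Corollary \ref{MeasApproxZeroBdy}(\ref{SmallOpenInsideOpen}), which produces open sets of arbitrarily small universal measure.

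First I would invoke Corollary \ref{MeasApproxZeroBdy}(\ref{SmallOpenInsideOpen}), applied with $U = X$, to obtain a non-empty open set $E \subset X$ with $\mu(E) < \eps$ for all $\mu \in M_h(X)$; in particular, this bound holds for the fixed $\mu$. Since $F$ is thin, $F \prec E$, so by Definition \ref{ThinSet} there exist an integer $M \in \N$, open sets $U_0, \ldots, U_M \subset X$ covering $F$, and integers $d(0), \ldots, d(M) \in \Z$ such that the translates $h^{d(j)}(U_j)$ are pairwise disjoint subsets of $E$.

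The key computation then combines countable subadditivity, $h$-invariance of $\mu$, and disjointness:
\[
\mu(F) \leq \sum_{j=0}^{M} \mu(U_j) = \sum_{j=0}^{M} \mu\bigl(h^{d(j)}(U_j)\bigr) = \mu\!\left( \bigcup_{j=0}^{M} h^{d(j)}(U_j) \right) \leq \mu(E) < \eps.
\]
Since $\eps > 0$ was arbitrary, $\mu(F) = 0$, and since $\mu$ was an arbitrary element of $M_h(X)$, $F$ is universally null.

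There is no serious obstacle here; the only thing to be careful about is that the open set $E$ furnished by Corollary \ref{MeasApproxZeroBdy}(\ref{SmallOpenInsideOpen}) may depend on $\eps$, but this is fine because the thinness hypothesis provides, for this very $E$, a disjoint family of translates packed inside $E$. The use of $h$-invariance of $\mu$ to turn the cover of $F$ into a disjoint family of subsets of $E$ is what makes the argument go through.
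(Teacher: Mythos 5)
Your proof is correct and follows essentially the same approach as the paper: use thinness to get $F \prec E$ for a small open set $E$, then combine countable subadditivity, $h$-invariance, and disjointness of the translates $h^{d(j)}(U_j)$ to bound $\mu(F)$ by $\mu(E) < \eps$. The only (inessential) difference is how the small open set is produced: the paper constructs it directly from freeness of the $h$-action by finding $N+1$ pairwise disjoint translates $U, h(U), \ldots, h^N(U)$ so that $\mu(U) \le 1/(N+1)$, while you invoke Corollary \ref{MeasApproxZeroBdy}(\ref{SmallOpenInsideOpen}) (which rests on Lemma \ref{ArbSmallOpenSet}, hence on Dini's theorem), a slightly heavier but perfectly valid route with no circularity, since that corollary only needs singletons to be universally null.
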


\begin{proof}
Let $\eps > 0$ be given, let $\mu \in M_{h}(X)$, and choose $N \in \N$ 
such that $1/N < \eps$. Since the action of $h$ on $X$ is free, there 
is a point $x \in X$ such that $x,h(x),\ldots,h^{N}(x)$ are distinct. 
Choose disjoint open neighborhoods $U_{0},\ldots,U_{N}$ of these 
points, and let $U = \bigcap_{j=0}^{N} h^{-j}(U_{j})$, which is an open 
neighborhood of $x$ such that $U,h(U),\ldots,h^{N}(U)$ are pairwise 
disjoint. Then using the $h$-invariance of $\mu$, it follows that 
\[
(N+1) \mu (U) = \sum_{j=0}^{N} \mu (h^{j}(U)) = \mu \left( 
\ts{ \bigcup_{j=0}^{N} h^{j}(U) } \right) \leq \mu (X) = 1,
\]
which gives $\mu (U) < 1/N < \eps$. Since $F$ is thin, we have $F 
\prec U$, and so there exist $M \in \N$, $U_{0},\ldots,U_{M} \subset 
X$ open, and $d(0),\ldots,d(M) \in \Z$ such that $F \subset 
\bigcup_{j=0}^{M} U_{j}$ and such that the sets $h^{d(j)}(U_{j})$ 
are pairwise disjoint subsets of $U$ for $0 \leq j \leq M$. Then 
again using the $h$-invariance of $\mu$, we have
\begin{align*}
\mu (F) \leq \mu \left( \ts{ \bigcup_{j=0}^{M} U_{j} } \right) \leq 
\sum_{j=0}^{M} \mu (U_{j}) &= \sum_{j=0}^{M} \mu ( h^{d(j)} (U_{j}) ) \\ 
&= \mu \left( \ts{ \bigcup_{j=0}^{M} h^{d(j)}(U_{j}) } \right) \leq \mu (U) 
< \eps.
\end{align*}
Since $\eps > 0$ was arbitrary, it follows that $\mu (F) = 0$.
\end{proof}

\begin{lem}\label{ThinUnionThin}
Let $(X,h)$ be as in Notation \ref{MinDynSys}.
\begin{enumerate}
\item If $F_{1},F_{2} \subset X$ are closed and $V_{1},V_{2} \subset 
X$ are open such that $F_{1} \prec V_{1}$, $F_{2} \prec V_{2}$, and 
$V_{1} \cap V_{2} = \varnothing$, then $F_{1} \cup F_{2} \prec V_{1} 
\cup V_{2}$.
\item The union of finitely many thin sets in $X$ is thin.
\end{enumerate}
\end{lem}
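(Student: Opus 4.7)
For part (1), the strategy is simply to concatenate the witnesses and let the disjointness of $V_1$ and $V_2$ handle the cross-disjointness. Since $F_1 \prec V_1$, choose $M_1 \in \N$, open sets $U_0^{(1)}, \ldots, U_{M_1}^{(1)} \subset X$, and integers $d^{(1)}(0), \ldots, d^{(1)}(M_1) \in \Z$ that witness this, and analogously choose $U_j^{(2)}$ and $d^{(2)}(j)$ for $0 \leq j \leq M_2$ witnessing $F_2 \prec V_2$. The combined lists witness $F_1 \cup F_2 \prec V_1 \cup V_2$: the union of all the $U_j^{(i)}$ covers $F_1 \cup F_2$, each translate $h^{d^{(i)}(j)}(U_j^{(i)})$ lies in $V_i \subset V_1 \cup V_2$, pairwise disjointness within each half is given, and disjointness across the two halves follows from $V_1 \cap V_2 = \varnothing$.

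For part (2), induction on the number of sets reduces the claim to the case of two thin sets $F_1, F_2$. Fix an arbitrary non-empty open set $U \subset X$. The plan is to split $U$ into two disjoint non-empty open subsets $V_1, V_2$, apply thinness of each $F_i$ to obtain $F_i \prec V_i$, then invoke part (1) to get $F_1 \cup F_2 \prec V_1 \cup V_2$; since any data witnessing this also witnesses $F_1 \cup F_2 \prec U$ (the disjoint translates still land inside $U$), this suffices.

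The only real substantive point is the splitting of $U$, which I would justify by showing that $X$ has no isolated points. Indeed, if $\{x\}$ were open, then $\{h^n(x)\}$ is open for every $n \in \Z$ since $h$ is a homeomorphism; if the orbit equals $X$ then $X$ is a finite discrete space (contradicting $X$ infinite), and otherwise picking $y \in X$ outside the orbit would force the dense orbit of $y$ to miss the open set $\{x\}$, a contradiction. Hence any $x \in U$ has a distinct point $y \in U$ arbitrarily close, and the metric topology on $X$ supplies disjoint open neighborhoods of $x$ and $y$ lying inside $U$, providing the required $V_1$ and $V_2$.

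There is no serious obstacle; the one point requiring attention is the no-isolated-points observation used to produce disjoint open subsets of an arbitrary non-empty open $U$. Everything else is a direct unpacking of Definition \ref{ThinSet} and a bookkeeping application of part (1).
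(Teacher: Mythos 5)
Your argument is correct and follows essentially the same route as the paper: for part (1) you concatenate the witnessing data and use disjointness of $V_1$ and $V_2$ to keep the translates pairwise disjoint, and for part (2) you split $U$ into two disjoint non-empty open subsets and invoke part (1). The paper takes for granted that $U$ contains two distinct points (citing minimality), whereas you supply the supporting no-isolated-points observation; both are fine.
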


\begin{proof}
To prove (1), simply observe that since $V_{1} \cap V_{2} = 
\varnothing$, the union of a pairwise disjoint collection of subsets 
of $V_{1}$ and a pairwise disjoint collection of subsets of $V_{2}$ 
is still pairwise disjoint.

For (2), it is sufficient to prove that the union of two thin sets 
is thin. Let $F_{1}, F_{2} \subset X$ be thin closed sets, and let 
$U \subset X$ be a non-empty open set. Since $h$ is minimal there 
must be distinct points $x_{1}, x_{2} \subset U$. Let $V_{1} \subset 
U$ and $V_{2} \subset U$ be disjoint open neighborhoods of $x_{1}$ 
and $x_{2}$ respectively. Then $F_{1} \prec V_{1}$ and $F_{2} \prec 
V_{2}$, and now part 1 implies that $F_{1} \cup F_{2} \prec V_{1} 
\cup V_{2} \subset U$, which proves that $F_{1} \cup F_{2}$ is thin.
\end{proof}

\begin{lem}\label{ThinWithClosedNbds}
Let $(X,h)$ be as in Notation \ref{MinDynSys}. 
Let $F \subset X$ be a thin closed set, and let $U \subset X$ be
open. Then there exist $M \in \N$, $F_{0},\ldots,F_{M} \subset X$
closed, and $d(0),\ldots,d(M) \in \Z$ such that:
\begin{enumerate}
\item $F \subset \bigcup_{j=0}^{M} F_{j}$;
\item $h^{d(j)}(F_{j}) \subset U$ for $0 \leq j \leq M$;
\item the sets $h^{d(j)}(F_{j})$ are pairwise disjoint for $0 \leq j
\leq M$.
\end{enumerate}
\end{lem}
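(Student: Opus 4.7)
The plan is to start from the definition of thinness to obtain an open cover with the desired disjoint-translate property, then use a shrinking argument to replace each open set in that cover by a closed set contained in it, preserving both the cover of $F$ and the translate disjointness for free.

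First, since $F$ is thin, we have $F \prec U$, and Definition \ref{ThinSet} supplies $M \in \N$, open sets $U_{0},\ldots,U_{M} \subset X$, and integers $d(0),\ldots,d(M) \in \Z$ such that $F \subset \bigcup_{j=0}^{M} U_{j}$, $h^{d(j)}(U_{j}) \subset U$ for each $j$, and the sets $h^{d(j)}(U_{j})$ are pairwise disjoint.

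Next, I would shrink the cover $\{U_{j}\}$ of $F$ to a cover whose members have closures inside the $U_{j}$. Since $X$ is compact metrizable (hence normal), one does this by the standard pointwise argument: for each $x \in F$, pick some index $j(x)$ with $x \in U_{j(x)}$ and an open neighborhood $W_{x}$ of $x$ with $\overline{W_{x}} \subset U_{j(x)}$; by compactness of $F$, extract a finite subcover $W_{x_{1}},\ldots,W_{x_{r}}$, and for each $j$ define
\[
V_{j} = \bigcup \set{ W_{x_{i}} \colon 1 \leq i \leq r, \,\, j(x_{i}) = j }.
\]
Then $\overline{V_{j}} \subset U_{j}$ (as the closure of a finite union of sets whose closures lie in $U_{j}$), and $F \subset \bigcup_{j=0}^{M} V_{j}$. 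Setting $F_{j} = \overline{V_{j}}$ produces closed sets satisfying $F \subset \bigcup F_{j}$ and, because $h^{d(j)}$ is a homeomorphism, $h^{d(j)}(F_{j}) = h^{d(j)}(\overline{V_{j}}) \subset h^{d(j)}(U_{j}) \subset U$; the pairwise disjointness of the $h^{d(j)}(F_{j})$ is inherited from that of the $h^{d(j)}(U_{j})$.

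There is no substantive obstacle in this argument; the whole proof is essentially an invocation of the shrinking lemma for finite open covers of a compact set in a normal space. The only minor care needed is to allow some of the resulting closed sets to be empty (when no $x_{i}$ gets assigned to a given $j$), which is harmless since the conclusion does not require the $F_{j}$ to be nonempty, and in any case one could simply discard indices with $V_{j} = \varnothing$.
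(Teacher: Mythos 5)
Your proof is correct and takes essentially the same approach as the paper: both begin with the definition of $F \prec U$, then invoke compactness of $F$ together with normality of $X$ to shrink the resulting open cover to one by sets whose closures lie inside the corresponding $U_{j}$, and finally take $F_{j}$ to be a finite union of these closures. The only cosmetic difference is that the paper defines $F_{j}$ as a union of closures while you take the closure of the union, which agree since the union is finite.
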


\begin{proof}
Since $F$ is thin, we have $F \prec U$, and so there exist $M \in 
\N$, $U_{0},\ldots,U_{M} \subset X$ open, and $d(0),\ldots,d(M) \in 
\Z$ such that $F \subset \bigcup_{j=0}^{M} U_{j}$ and the sets
$h^{d(j)}(U_{j})$ are pairwise disjoint subsets of $U$ for $0 \leq
j \leq M$. Now temporarily fix $j \in \set{0,\ldots,M}$. For each
$x \in U_{j}$, let $V_{x}^{(j)}$ be a neighborhood of $x$ such that
$V_{x}^{(j)} \subset \overline{V}_{x}^{(j)} \subset U_{j}$. Then
$\set{V_{x}^{(j)} \colon x \in U_{j}, 0 \leq j \leq M}$ is an open
cover for $F$, hence it contains a finite subcover. For $0 \leq j
\leq M$ let $\mathcal{S}_{j}$ be the (possibly empty) collection of
all sets $V_{x}^{(j)}$ that appear in the finite subcover for $F$,
and set $F_{j} = \bigcup_{V \in \mathcal{S}_{j}} \overline{V}$. Note
that $F_{j} = \varnothing$ if the collection $\mathcal{S}_{j}$ is
empty. Then each $F_{j}$ is closed (being the union of finitely 
many closed sets) and satisfies $F_{j} \subset U_{j}$. It follows 
that the sets $h^{d(j)}(F_{j})$ are pairwise disjoint subsets of $U$ 
for $0 \leq j \leq M$.
\end{proof}

\begin{lem}\label{DistinctIntegers}
Suppose that $d_{0},\ldots,d_{m}$ are $m+1$ distinct integers, 
and that $n_{1},n_{2}$ are distinct integers (but not necessarily 
distinct from the $d_{i}$). Then the set 
\[
\set{d_{i} + n_{j} \colon 0 \leq i \leq m, j = 1,2} 
\]
contains at least $m+2$ distinct integers.
\end{lem}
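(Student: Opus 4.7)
My plan is to view the set in question as the union of two translates of $D = \set{d_{0}, \ldots, d_{m}}$. Writing $D_{j} = D + n_{j}$ for $j = 1, 2$, the set in question equals $D_{1} \cup D_{2}$, so I need to show $\card(D_{1} \cup D_{2}) \geq m + 2$. Each $D_{j}$ has exactly $m+1$ elements, since translation by an integer is a bijection on $\Z$ and $D$ has $m+1$ distinct elements by hypothesis.

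The crux of the argument will be to rule out the possibility $D_{1} = D_{2}$. If this equality held, then $D = D + (n_{2} - n_{1})$, so $D$ would be invariant under translation by the nonzero integer $k = n_{2} - n_{1}$. Iterating the invariance would give $d_{0} + \ell k \in D$ for every $\ell \in \Z$, producing infinitely many distinct elements of $D$ and contradicting its finiteness.

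Once $D_{1} \neq D_{2}$ is established, the conclusion is immediate: since $D_{1}$ and $D_{2}$ have the same finite cardinality $m+1$ but are not equal, neither can be contained in the other, so one can choose $a \in D_{1} \setminus D_{2}$. Then $D_{2} \cup \set{a}$ is a subset of $D_{1} \cup D_{2}$ consisting of $m + 2$ distinct integers. There is no substantial obstacle here; the only point worth flagging is that the hypothesis $n_{1} \neq n_{2}$ (rather than any disjointness assumption between the $n_{j}$ and the $d_{i}$) is exactly what is needed, because only the nonzero shift between the two translates enters the argument.
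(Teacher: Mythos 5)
Your proof is correct, but it takes a genuinely different route from the paper's. The paper simply orders everything: assume without loss of generality $d_{0} < d_{1} < \cdots < d_{m}$ and $n_{1} < n_{2}$, and then observe that
\[
d_{0} + n_{1} < d_{1} + n_{1} < \cdots < d_{m} + n_{1} < d_{m} + n_{2}
\]
exhibits $m+2$ distinct members of the set explicitly. Your argument instead treats the set structurally as a union of two translates $D_{1} = D + n_{1}$ and $D_{2} = D + n_{2}$ of $D = \set{d_{0},\ldots,d_{m}}$, rules out $D_{1} = D_{2}$ by noting that a nonempty finite set of integers cannot be invariant under a nonzero translation, and then concludes from cardinality considerations. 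Both proofs are complete and short. The paper's argument is more elementary and produces an explicit witness chain, which is in the spirit of how the lemma is applied in Proposition \ref{TopSmallIsThin}. Your argument is slightly more abstract and has the virtue of generalizing verbatim to finite subsets of any torsion-free abelian group (or indeed any group with no nontrivial finite-order elements acting on itself by translation), where a total order may not be available; it trades the explicit witness for a nonconstructive pigeonhole step. Your closing remark correctly identifies that only $n_{1} \neq n_{2}$ is used, which is also implicit in the paper's ordering argument.
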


\begin{proof}
Without loss of generality, suppose that $d_{0} < d_{1} < \cdots < 
d_{m}$ and $n_{1} < n_{2}$. Then we have 
\[
d_{0} + n_{1} < d_{1} + n_{1} < \cdots < d_{m} + n_{1} < d_{m} + 
n_{2},
\]
which provides $m + 2$ distinct integers in the set $\set{d_{i} + 
n_{j} \colon 0 \leq i \leq m, j = 1,2}$.
\end{proof}

\begin{prp}\label{TopSmallIsThin}
Let $(X,h)$ be as in Notation \ref{MinDynSys}. 
If $F \subset X$ is topologically $h$-small, then $F$ is thin.
\end{prp}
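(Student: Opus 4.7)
The plan is to proceed by induction on the topological smallness constant $m$ of $F$. The base case $m = 0$ forces $F = \varnothing$, which is vacuously thin.

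For the inductive step, assume the proposition for every closed set of smallness constant at most $m - 1$, and let $F$ have smallness constant $m \geq 1$. The crucial structural observation, which combines the smallness of $F$ with Lemma \ref{DistinctIntegers}, is that for every nonzero $n \in \Z$ the closed set $F \cap h^{n}(F)$ has smallness constant at most $m - 1$. Indeed, if distinct integers $d_{0}, \ldots, d_{k}$ gave
$
\bigcap_{i=0}^{k} h^{d_{i}}(F \cap h^{n}(F)) = \bigcap_{i, j} h^{d_{i} + n_{j}}(F) \neq \varnothing
$
with $n_{0} = 0$ and $n_{1} = n$, then Lemma \ref{DistinctIntegers} would produce at least $k + 2$ distinct exponents, so the smallness hypothesis on $F$ would force $k + 2 \leq m$. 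By the inductive hypothesis, each $F \cap h^{n}(F)$ (with $n \neq 0$) is therefore thin.

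Given a non-empty open $U \subset X$, I would first use minimality and compactness to fix $N$ with $X = \bigcup_{n=0}^{N} h^{-n}(U)$, and then apply the shrinking lemma for compact normal spaces to the finite open cover $\{h^{-n}(U)\}_{n=0}^{N}$ of $F$ to produce closed sets $C_{n} \subset F \cap h^{-n}(U)$ with $\bigcup_{n} C_{n} = F$. The shifted images $h^{n}(C_{n}) \subset U$ then give a closed cover of $F$ with the correct shifts; their pairwise intersections satisfy $h^{n}(C_{n}) \cap h^{n'}(C_{n'}) \subset h^{n}(F \cap h^{n'-n}(F))$, so they lie in thin closed sets by the key observation. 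To upgrade to a genuinely disjoint cover, I would apply Lemma \ref{ThinWithClosedNbds} to each thin overlap: it can be covered by finitely many closed sets whose shifted images are pairwise disjoint inside a prescribed open subset of $U$. By minimality, $U$ contains infinitely many distinct points and hence admits arbitrarily many pairwise disjoint open sub-regions; one sub-region can be allocated to each overlap cover and a larger sub-region reserved for the non-overlap portions of the $h^{n}(C_{n})$'s. Finally, pairwise disjoint closed sets inside $U$ can be thickened to pairwise disjoint open neighborhoods inside $U$ using normality of $X$, producing the desired open cover witnessing $F \prec U$.

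The hardest part is the bookkeeping in the refinement step: the overlap sets for different pairs $(n, n')$ are not disjoint from each other, so they must be peeled off one pair at a time, each time applying Lemma \ref{ThinWithClosedNbds} to a set of strictly smaller smallness constant (handled by the inductive hypothesis) and carefully tracking which portion of $U$ remains available for subsequent shifted images. Since there are only $\binom{N+1}{2}$ pairs of shifts, I expect the procedure to terminate after finitely many iterations with the required disjoint cover.
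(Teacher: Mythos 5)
Your proposal follows essentially the same inductive strategy as the paper, and the central structural observation is identical: Lemma \ref{DistinctIntegers} forces $F \cap h^{n}(F)$ (respectively $h^{j}(F) \cap h^{k}(F)$ in the paper's notation) to have strictly smaller smallness constant, so the inductive hypothesis makes the overlaps thin. The paper's base case is $m=1$ rather than $m=0$, and it covers $F$ by preimages $h^{-n_{j}}(Z_{1})$ of a small fixed open $Z_{1} \subset U$ rather than by shrinking the cover $\set{h^{-n}(U)}$, but these are cosmetic differences.

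The one place you express real doubt --- the ``peeling one pair at a time'' bookkeeping --- is unnecessary, and the paper shows why. The definition of $\prec$ only requires a cover, not a partition, so the overlap sets $D_{n,n'}$ may freely intersect each other. One allocates to each pair $(n,n')$ its own open sub-region $S_{n,n'} \subset U$ (taken inside a region $V_{2}$ disjoint from the region $V_{1}$ reserved for the residual pieces), covers $D_{n,n'}$ by open sets whose shifts land disjointly in $S_{n,n'}$, and then performs a single simultaneous removal: let $D = \bigcup_{n,n'} h^{-n}(D_{n,n'})$, choose an open $W$ with $D \subset W \subset \overline{W} \subset W_{0}$ where $W_{0}$ is the union of the preimages of those overlap covers, and set $C_{n}' = C_{n} \cap (X \setminus W)$. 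The translates $h^{n}(C_{n}')$ are automatically pairwise disjoint: any point of $h^{n}(C_{n}') \cap h^{n'}(C_{n'}')$ would have its $h^{-n}$-preimage in $D_{n,n'} \subset W$, contradicting membership in $C_{n}'$. Thus no iterated peeling or shrinking-constant tracking is needed; a single removal of a neighborhood of the union of all overlaps suffices. With this clarification your argument closes the gap and matches the paper's proof in substance.
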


\begin{proof}
The proof is by induction on the smallness constant $m$. First 
consider the case where the smallness constant is $m = 1$. Then
given $j,k \in \Z$ with $j \neq k$, we have $h^{j}(F) \cap h^{k}(F)
= \varnothing$. Let $U \subset X$ be open and non-empty, and let 
$V_{0} \subset U$ be open and non-empty with $\overline{V}_{0} 
\subset X$. By Lemma \ref{OpenPositiveMeas}, $\set{ h^{n}(V_{0}) 
\colon n \in \Z}$ is an open cover for $F$, so there exists a finite 
subcover $\set{h^{-d(0)}(V_{0}),\ldots,h^{-d(M)}(V_{0})}$. Set 
$F_{j} = F \cap \overline{h^{-d(j)}(V_{0})}$. Then the sets $h^{d(j)}
(F_{j})$ are closed, disjoint (since $h^{d(j)}(F_{j}) \subset 
h^{d(j)}(F)$ and these sets are disjoint) and satisfy $h^{d(j)}
(F_{j}) \subset \overline{V}_{0} \subset U$. Since $X$ is normal, 
there exist disjoint open sets $W_{0},\ldots,W_{M} \subset X$ such 
that $h^{d(j)}(F_{j}) \subset W_{j}$. Finally, for $0 \leq j \leq M$ 
set $U_{j} = h^{-d(j)} (W_{j} \cap U)$. Then $F \subset 
\bigcup_{j=0}^{M} U_{j}$, and the sets $h^{d(j)}(U_{j})$ are pairwise 
disjoint (being subsets of the $W_{j}$) and contained in $U$.

Now let $m \geq 1$, and suppose that closed sets which are 
topologically $h$-small with smallness constant $m$ are thin. Let $F 
\subset X$ be topologically $h$-small with smallness constant $m+1$. 
For $j,k \in \Z$ with $j \neq k$, define $F_{j,k} = h^{j}(F) \cap 
h^{k}(F)$. We claim that the sets $F_{j,k}$ are topologically 
$h$-small with smallness constant $m$. To see this, let $d_{0},
\ldots,d_{m}$ be $m+1$ distinct integers, and let $j,k \in \Z$ with 
$j \neq k$. By Lemma \ref{DistinctIntegers}, the set $\set{d_{i} + l 
\colon, 0 \leq i \leq m, l = j,k}$ contains at least $m+2$ distinct 
integers. It follows that 
\[
h^{d_{0}}(F_{j,k}) \cap \cdots \cap h^{d_{m}} (F_{j,k}) = 
\bigcap_{i=0}^{m} ( h^{d_{i} + j}(F) \cap h^{d_{i} + k}(F) ) = 
\varnothing,
\]
which proves the claim. Now choose disjoint, non-empty open sets 
$V_{1},V_{2} \subset U$, and choose disjoint, non-empty open sets 
$Z_{1},Z_{2}$ with $\overline{Z}_{1} \subset V_{1}$ and 
$\overline{Z}_{2} \subset V_{2}$. By Lemma \ref{OpenPositiveMeas}, 
the collection $\set{h^{n}(Z_{1}) \colon n \in \Z}$ is an open cover 
for $F$, so it contains a finite subcover $\set{h^{-n_{0}}(Z_{1}),
\ldots,h^{-n_{K}} (Z_{1})}$. Set $T = \set{(j,k) \colon 0 \leq j < k 
\leq K}$ and for each $(j,k) \in T$ define $D_{j,k} = h^{n_{j}}(F) 
\cap h^{n_{k}}(F) \cap \overline{Z}_{1}$, which is a closed subset 
of $F_{n_{j},n_{k}}$. By the earlier claim, $D_{j,k}$ is 
topologically $h$-small with smallness constant $m$, and so it is 
thin by the induction hypothesis. Choose pairwise disjoint open 
sets $S_{j,k} \subset Z_{2}$ for $(j,k) \in T$. Since each $D_{j,k}$ 
is thin, there exist $M(j,k) \in \N$, $U_{j,k,0}^{(0)}, \ldots, 
U_{j,k,M(j,k)}^{(0)} \subset X$ open, and $d_{j,k}(0), \ldots, 
d_{j,k}(M(j,k)) \in \Z$ such that:
\begin{enumerate}
\item $D_{j,k} \subset \bigcup_{i=0}^{M(j,k)} U_{j,k,i}^{(0)}$; 
\item $h^{d_{j,k}(i)} ( U_{j,k,i}^{(0)} ) \subset S_{j,k}$;
\item the sets $h^{d_{j,k}(i)} ( U_{j,k,i}^{(0)} )$ are 
pairwise disjoint for $0 \leq i \leq M(j,k)$.
\end{enumerate}
Now set 
\[
D = \bigcup_{(j,k) \in T} h^{-n_{j}} ( D_{j,k} ) 
\hspace{0.5 in} \textnormal{and} \hspace{0.5 in} 
W_{0} = \bigcup_{(j,k) \in T} h^{-n_{j}} \left( \ts{ \bigcup_{i=0}^{ 
M(j,k) } U_{j,k,i}^{(0)} } \right).
\]
Then $D$ is closed, $W_{0}$ is open, and $D \subset W_{0}$. Choose 
$W \subset X$ open such that $D \subset W \subset \overline{W} 
\subset W_{0}$. For $0 \leq j \leq K$, set $F_{j} = h^{-n_{j}}( 
\overline{Z}_{1} ) \cap (X \setminus W) \cap F$, which is closed. 
Let $x \in F$ and suppose $x \not \in W$. For some $j \in \set{0,
\ldots,K}$, we have $x \in h^{-n_{j}} (Z_{1})$. Then $x \in F$, $x 
\in h^{-n_{j}} (\overline{Z}_{1})$, and $x \in X \setminus W$, so $x 
\in F_{j}$. It follows that $\set{F_{0},\ldots,F_{K},W}$ covers $F$. 
Next suppose that $x \in h^{n_{j}}(F_{j}) \cap h^{n_{k}}(F_{k})$ for 
some $(j,k) \in T$. Then there are $x_{j} \in F_{j}$ and $x_{k} \in 
F_{k}$ such that $h^{n_{j}}(x_{j}) = x = h^{n_{k}}(x_{k})$. Since 
$F_{j}, F_{k} \subset F$ we certainly have $x \in h^{n_{j}}(F) \cap 
h^{n_{k}}(F)$. Moreover, $x_{j} = h^{-n_{j}}(x) \in h^{-n_{j}} ( 
\overline{Z}_{1} )$, which gives $x \in \overline{Z}_{1}$. It 
follows that $x \in D_{j,k}$, and so also $x_{j} = h^{-n_{j}}(x) \in 
h^{-n_{j}}(D_{t_{j,k}}) \subset W$. This implies $x_{j} \not \in 
F_{j}$, a contradiction. Therefore, the sets $h^{n_{j}} (F_{j})$ are 
pairwise disjoint. Since $h^{n_{j}}(F_{j}) \subset 
\overline{Z}_{1}$, they are all subsets of $V_{1}$. Using the 
normality of $X$, choose non-empty pairwise disjoint open sets 
$U_{0}^{(0)}, \ldots, U_{K}^{(0)} \subset X$ such that 
$h^{n_{j}}(F_{j}) \subset U_{j}^{(0)} \subset V_{1}$. For an 
appropriate $M \in \N$, re-index the sets 
\[
\set{ h^{-n_{0}}(U_{0}^{(0)}),\ldots,h^{-n_{K}}(U_{K}^{(0)}) } \cup 
\set{ h^{-n_{j}} ( U_{j,k,i}^{(0)}) \colon (j,k) \in T, 0 \leq 
i \leq M(j,k) }
\]
and
\[
\set{ n_{0}, \ldots, n_{K} } \cup \set{ n_{j} + d_{j,k}(i) 
\colon (j,k) \in T, 0 \leq i \leq M(j,k) }
\]
as $\set{ U_{0}, \ldots, U_{M} }$ and $\set{ d(0), \ldots, d(M) }$ 
respectively. Then $F \subset \bigcup_{i=0}^{M} U_{j}$ and the sets 
$h^{d(j)}(U_{j})$ are pairwise disjoint subsets of $U$ for $0 \leq j 
\leq M$. It follows that $F$ is thin, completing the induction.
\end{proof}

\begin{cor}\label{TopSmallMeasZero}
Let $(X,h)$ be as in Notation \ref{MinDynSys}. Let $F \subset X$ be 
closed and topologically $h$-small. Then $F$ is universally null.
\end{cor}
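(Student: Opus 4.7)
The plan is to observe that this corollary follows immediately by chaining together the two results established just prior: Proposition \ref{TopSmallIsThin} and Lemma \ref{ThinHasMeasZero}. There is essentially no work to do beyond invoking these.

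More precisely, let $F \subset X$ be closed and topologically $h$-small. First I would apply Proposition \ref{TopSmallIsThin} to conclude that $F$ is thin in the sense of Definition \ref{ThinSet}. Then I would apply Lemma \ref{ThinHasMeasZero} to the thin set $F$ to conclude that $\mu(F) = 0$ for every $\mu \in M_{h}(X)$, which is exactly the statement that $F$ is universally null in the sense of Definition \ref{UnivNull}.

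There is no real obstacle here: the nontrivial content has already been absorbed into the proof of Proposition \ref{TopSmallIsThin} (the inductive argument on the smallness constant, building disjoint translates inside arbitrarily small open sets) and into Lemma \ref{ThinHasMeasZero} (using freeness of the action to produce an open set of measure less than any prescribed $\eps$, then transporting $F$ inside it via $h$-invariance of $\mu$). Consequently the proof I would write is only one or two sentences long, purely a citation of the two previous results.
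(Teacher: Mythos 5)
Your proposal is exactly the paper's proof: invoke Proposition \ref{TopSmallIsThin} to get that $F$ is thin, then Lemma \ref{ThinHasMeasZero} to conclude $F$ is universally null. Correct and identical in approach.
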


\begin{proof}
This follows immediately from Proposition \ref{TopSmallIsThin} and 
Lemma \ref{ThinHasMeasZero}.
\end{proof}

The next lemma will play a crucial role in the proof of our main 
result. It provides a strong decomposition property for thin sets.

\begin{lem}\label{LeftoverCoverSpace}
Let $(X,h)$ be as in Notation \ref{MinDynSys}. 
Let $\eps > 0$ be given, and let $F \subset X$ be thin. Then for any
non-empty open set $U \subset X$ there exist $M \in \N$, closed sets
$F_{j} \subset X$ for $0 \leq j \leq M$, open sets $T_{j},V_{j},
W_{j} \subset X$ for $0 \leq j \leq M$, continuous functions $f_{0},
\ldots, f_{M} \colon X \to [0,1]$, and $d(0),\ldots,d(M) \in \Z$ 
such that:
\begin{enumerate}
\item $F \subset \bigcup_{j=0}^{M} F_{j}$;
\item $h^{-d(j)}(F_{j}) \subset T_{j} \subset \overline{T}_{j}
\subset V_{j} \subset \overline{V}_{j} \subset W_{j} \subset U$
for $0 \leq j \leq M$;
\item $\sum_{j=0}^{M} f_{j} = 1$ on $\bigcup_{j=0}^{M}
h^{d(j)}( \overline{V}_{j} )$;
\item $\supp (f_{j} \circ h^{-d(j)}) \subset W_{j}$ for $0 \leq j
\leq M$;
\item the sets $W_{j}$ are pairwise disjoint and $\sum_{j=0}^{M} \mu
(W_{j}) < \eps$ for all $\mu \in M_{h}(X)$.
\end{enumerate}
\end{lem}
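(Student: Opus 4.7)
The strategy has four ingredients: use Corollary \ref{MeasApproxZeroBdy}(\ref{SmallOpenInsideOpen}) to replace $U$ with a smaller open target of uniformly small measure; use thinness of $F$ (via Lemma \ref{ThinWithClosedNbds}) to pack disjoint compressed copies of $F$ into that target; use normality of $X$ to insert the nested open chain $T_{j} \subset \overline{T}_{j} \subset V_{j} \subset \overline{V}_{j} \subset W_{j}$; and use a standard subordinate partition of unity to build the functions $f_{j}$. First I would apply Corollary \ref{MeasApproxZeroBdy}(\ref{SmallOpenInsideOpen}) to $U$ to obtain a non-empty open $U_{0} \subset U$ with $\mu(U_{0}) < \eps$ for every $\mu \in M_{h}(X)$; this ultimately secures condition (5).

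Next, since $F$ is thin we have $F \prec U_{0}$, and Lemma \ref{ThinWithClosedNbds} delivers $M \in \N$, closed sets $F_{0}, \ldots, F_{M} \subset X$, and integers $d(0), \ldots, d(M) \in \Z$ (negated if needed to match the sign convention in (2)) such that $F \subset \bigcup_{j=0}^{M} F_{j}$ and the sets $h^{-d(j)}(F_{j})$ are pairwise disjoint closed subsets of $U_{0}$. Because $X$ is normal and these closed sets sit inside the open set $U_{0}$, I can choose pairwise disjoint open $W_{j}$ with $h^{-d(j)}(F_{j}) \subset W_{j} \subset U_{0}$, and then, by two further applications of normality inside each $W_{j}$, nest open sets $V_{j}$ and $T_{j}$ so that $h^{-d(j)}(F_{j}) \subset T_{j} \subset \overline{T}_{j} \subset V_{j} \subset \overline{V}_{j} \subset W_{j}$. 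This gives (2), and (5) is then immediate from pairwise disjointness:
\[
\ts{\sum_{j=0}^{M}} \mu(W_{j}) \;=\; \mu \left( \ts{\bigcup_{j=0}^{M} W_{j}} \right) \;\leq\; \mu(U_{0}) \;<\; \eps
\]
for all $\mu \in M_{h}(X)$.

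For the partition of unity, I would consider the closed set $K := \bigcup_{j=0}^{M} h^{d(j)}(\overline{V}_{j})$ (a finite union of closed sets, since each $h^{d(j)}$ is a homeomorphism), covered by the finite open family $\{h^{d(j)}(W_{j})\}_{j=0}^{M}$, and apply the standard partition-of-unity construction subordinate to a finite open cover of a closed set in a normal space to produce continuous $f_{j} \colon X \to [0,1]$ with $\supp(f_{j}) \subset h^{d(j)}(W_{j})$ and $\sum_{j=0}^{M} f_{j} = 1$ on $K$. Condition (3) is then immediate, and condition (4) follows from the identity $\supp(f_{j} \circ h^{-d(j)}) = h^{d(j)}(\supp f_{j})$ together with the chosen support constraint. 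The main obstacle is keeping the directions of the dynamics straight: the compressed pieces of $F$ must land inside $U_{0} \subset U$ while the partition of unity lives on the spread-out set that covers $F$, so the sign conventions in (2), (3), and (4) have to be tracked consistently when the thinness data from Lemma \ref{ThinWithClosedNbds} is imported.
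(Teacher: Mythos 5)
Your proof follows exactly the route the paper takes: shrink $U$ to a uniformly small open target via Corollary \ref{MeasApproxZeroBdy}, obtain a closed cover of $F$ with pairwise disjoint translates inside that target from Lemma \ref{ThinWithClosedNbds}, nest the open sets $T_j \subset \overline{T}_j \subset V_j \subset \overline{V}_j \subset W_j$ by normality, and finish with a partition of unity subordinate to the cover $\set{h^{d(j)}(W_j)}$ of $\bigcup_j h^{d(j)}(\overline{V}_j)$. You also implicitly correct a slip in the paper's proof, which writes ``$F_j \subset W_j \subset E$'' where ``$h^{-d(j)}(F_j) \subset W_j \subset E$'' is clearly what is meant.

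The final sentence of your argument, however, has a sign problem. The identity you invoke, $\supp(f_j \circ h^{-d(j)}) = h^{d(j)}(\supp f_j)$, is correct, but combined with your chosen support constraint $\supp f_j \subset h^{d(j)}(W_j)$ it only gives $\supp(f_j \circ h^{-d(j)}) \subset h^{2d(j)}(W_j)$, which is not $W_j$. To land in $W_j$ one needs $\supp f_j \subset h^{-d(j)}(W_j)$, and that is incompatible with a partition of unity summing to $1$ on $\bigcup_j h^{d(j)}(\overline{V}_j)$. This is not your invention: the paper's own proof makes the identical assertion (that $\supp g_j \subset h^{d(j)}(W_j)$ gives $\supp(g_j \circ h^{-d(j)}) \subset W_j$), and conditions (3)--(4) of the lemma as written already pull in opposite directions. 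The consistent repair is to set $d(j) = k(j)$ rather than $-k(j)$ when importing the data from Lemma \ref{ThinWithClosedNbds}, so that $h^{d(j)}(F_j) \subset T_j \subset \cdots \subset W_j$, build the partition of unity near $\bigcup_j h^{-d(j)}(\overline{V}_j)$ with $\supp f_j \subset h^{-d(j)}(W_j)$, and then $\supp(f_j \circ h^{-d(j)}) = h^{d(j)}(\supp f_j) \subset W_j$ as required; the same sign change must then be carried through the use of this lemma in the proof of Theorem \ref{TopSmallDCP}.
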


\begin{proof}
Since $U$ is open and non-empty, Corollary 
\ref{MeasApproxZeroBdy} implies there is a non-empty open set 
$E \subset U$ with $\mu (E) < \eps$ for all $\mu \in M_{h}(X)$. 
Since $F$ is thin, we can apply Lemma \ref{ThinWithClosedNbds} to
$F$ and $E$, which implies there exist $M \in \N$, $F_{0},\ldots,
F_{M} \subset X$ closed, and $k(0),\ldots,k(M) \in \Z$ such that $F 
\subset \bigcup_{j=0}^{M} F_{j}$ and such that the sets $h^{k(j)}
(F_{j})$ are pairwise disjoint subsets of $E$. For $0 \leq j \leq 
M$, we set $d(j) = - k(j)$. Since $X$ is normal, we may choose for 
$0 \leq j \leq M$ open sets $W_{j}$ with $F_{j} \subset W_{j} 
\subset E$ such that the $W_{j}$ are pairwise disjoint. Now we can 
use the compactness of $X$ to obtain open sets $T_{j},V_{j} \subset 
X$ such that
\[
h^{-d(j)} (F_{j}) \subset T_{j} \subset \overline{T}_{j} \subset
V_{j} \subset \overline{V}_{j} \subset W_{j}.
\]
For $0 \leq j \leq M$ choose continuous functions $g_{j} \colon X
\to [0,1]$ such that $g_{j} = 1$ on $h^{d(j)} (\overline{V}_{j})$
and $\supp(g_{j}) \subset h^{d(j)} (W_{j})$. Then $\sum_{j=0}^{M}
g_{j}(x) \geq 1$ for all $x \in \bigcup_{j=0}^{M} h^{d(j)}
(\overline{V}_{j})$. By the continuity of the $g_{j}$, there is an 
open set $Q \subset X$ such that $\bigcup_{j=0}^{M} h^{d(j)} 
(\overline{V}_{j}) \subset Q$ and $\sum_{j=0}^{M} g_{j}(x) \geq 
\ts{\frac{1}{2}}$ for all $x \in Q$. Choose a continuous function 
$f \colon X \to [0,1]$ such that $f = 1$ on $\bigcup_{j=0}^{M} 
h^{d(j)}(\overline{V}_{j})$ and $\supp (f) \subset Q$. Now, for 
$0 \leq j \leq M$, define continuous functions $f_{j} \colon X \to 
[0,1]$ by 
\[ 
f_{j}(x) = \begin{cases} f(x) g_{j}(x) \left( \sum_{i=0}^{M} 
g_{i}(x) \right)^{-1} & \textnormal{if} \; x \in Q \\ 0 & 
\textnormal{if} \; x \not \in Q \end{cases} 
\]
Then for any $x \in \bigcup_{j=0}^{M} 
h^{d(j)}(\overline{V}_{j})$, we have 
\[
\sum_{j=0}^{M} f_{j}(x) = \sum_{j=0}^{M} f(x) g_{j}(x) \ts{\left( 
\sum_{i=0}^{M} g_{i}(x) \right)^{-1}}= \ts{\left( \sum_{i=0}^{M} 
g_{i}(x) \right)^{-1}} \sum_{j=0}^{M} g_{j}(x) = 1.
\]
In particular, $\sum_{j=0}^{M} f_{j} = 1$ on $\bigcup_{j=0}^{M} 
h^{d(j)}(T_{j})$. Moreover, $\supp (f_{j}) = \supp (g_{j}) \subset 
h^{d(j)}(W_{j})$, which implies that $\supp (f_{j} \circ h^{-d(j)}) 
= \supp (g_{j} \circ h^{-d(j)}) \subset W_{j}$. Finally, as the 
$W_{j}$ are pairwise disjoint subsets of $E$ for $0 \leq j \leq M$, 
it follows that for any $\mu \in M_{h}(X)$, we have
\[
\sum_{j=0}^{M} \mu (W_{j}) = \mu \left( \ts{ \bigcup_{j=0}^{M} W_{j} }
\right) \leq \mu (E) < \eps,
\]
which completes the proof.
\end{proof}

\section{Small Boundary Properties}\label{SectionSBPs}

In \cite{GK}, Giol and Kerr established that classification 
of transformation group $C^{*}$-algebras by their Elliott invariants 
is intractable for minimal dynamical systems having positive mean 
dimension (in the sense of \cite{LiWe}). For this reason, we only 
wish to consider systems with strong enough regularity properties 
to imply the system has mean dimension zero. For minimal dynamical 
systems, the following definition is equivalent to mean dimension 
zero by Theorem 6.2 of \cite{Lind} and Theorem 5.4 of \cite{LiWe}.

\begin{dfn}\label{SBP}
Let $(X,h)$ be as in Notation \ref{MinDynSys}. We say $(X,h)$ has 
the {\emph{small boundary property}} if for every $x \in X$ and every 
open neighborhood $U$ of $x$, there is an open neighborhood $V$ 
of $x$ such that $V \subset U$ and $\del V$ is universally null.
\end{dfn}

As noted earlier, being universally null might be seen as a weak form 
of smallness. We thus introduce a new definition where this is replaced 
with one of our stronger topological conditions.

\begin{dfn}\label{TSBP}
We say $(X,h)$ has the {\emph{topological small boundary property}} 
if whenever $F,K \subset X$ are disjoint compact sets, then there exist 
open sets $U,V \subset X$ such that $F \subset U$, $K \subset V$, 
$\overline{U} \cap \overline{V} = \varnothing$, and $\del U$ is 
topologically $h$-small.
\end{dfn}

Definition \ref{TSBP} first appeared in \cite{PhTopSmall} as a purely 
topological analogue of a transversality property for manifolds.

\begin{lem}\label{TSBPEquiv}
Let $(X,h)$ be as in Notation \ref{MinDynSys}.
\begin{enumerate}
\item\label{TopSmallSingle} If $(X,h)$ has the topological small 
boundary property, then whenever $F \subset X$ is a compact set, 
there is an open set $U \subset X$ such that $F \subset U$ and 
$\del U$ is topologically $h$-small.
\item\label{TopSmallPoint} $(X,h)$ has the topological small 
boundary property if and only if for any $x \in X$ and any open 
neighborhood $U$ of $x$, there is an open neighborhood $V$ of 
$x$ such that $V \subset \overline{V} \subset U$ and $\del V$ is 
topologically $h$-small.
\end{enumerate}
\end{lem}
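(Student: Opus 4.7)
The plan is to deduce both statements directly from the topological small boundary property (TSBP), using a finite-cover argument together with Lemma \ref{TopSmallProps}(\ref{TopSmallUnion}) on finite unions of topologically $h$-small sets, and one application of normality.

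For part (\ref{TopSmallSingle}), I would apply TSBP to the pair $(F,\varnothing)$ of disjoint compact sets; this immediately produces an open $U\supset F$ with $\del U$ topologically $h$-small. (If one prefers to avoid $K=\varnothing$, dispatch $F=X$ trivially with $U=X$, and for $F\neq X$ apply TSBP with $K=\{y\}$ for any $y\in X\setminus F$.)

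For the forward direction $(\Rightarrow)$ of part (\ref{TopSmallPoint}), given $x\in X$ and an open neighborhood $U$ of $x$, apply TSBP to $F=\{x\}$ and $K=X\setminus U$, yielding open sets $V\ni x$ and $W\supset X\setminus U$ with $\overline{V}\cap\overline{W}=\varnothing$ and $\del V$ topologically $h$-small. Since $X\setminus U\subset \overline{W}$, this forces $\overline{V}\cap(X\setminus U)=\varnothing$, whence $\overline{V}\subset U$. For the reverse direction $(\Leftarrow)$, let $F,K\subset X$ be disjoint compact sets; for each $x\in F$ apply the point-neighborhood hypothesis to the open neighborhood $X\setminus K$ of $x$, obtaining an open $V_x\ni x$ with $\overline{V_x}\subset X\setminus K$ and $\del V_x$ topologically $h$-small. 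Extract a finite subcover $V_{x_1},\dots,V_{x_n}$ of $F$ and set $U=\bigcup_{i=1}^n V_{x_i}$. Then $F\subset U$, $\overline{U}\subset X\setminus K$, and $\del U\subset \bigcup_{i=1}^n \del V_{x_i}$, which is topologically $h$-small by Lemma \ref{TopSmallProps}(\ref{TopSmallUnion}). Since $\overline{U}$ and $K$ are disjoint closed subsets of the compact Hausdorff (hence normal) space $X$, a standard Urysohn-type separation provides open sets $U^*\supset \overline{U}$ and $V\supset K$ with $\overline{U^*}\cap\overline{V}=\varnothing$; in particular $\overline{U}\cap\overline{V}=\varnothing$, so the pair $(U,V)$ witnesses TSBP for $(F,K)$.

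There is no substantial obstacle in this argument; the only technical subtlety is the distinction between separating disjoint closed sets by disjoint open sets versus by open sets with disjoint closures, which is handled by the one normality step at the very end of the reverse direction.
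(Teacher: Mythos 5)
Your proof is correct and follows essentially the same route as the paper: apply the TSBP to $(F,\varnothing)$ for part (1); for (2), use $F=\{x\}$, $K=X\setminus U$ in the forward direction, and in the converse cover $F$ by small neighborhoods from the point-version, pass to a finite subcover, and invoke Lemma \ref{TopSmallProps}(\ref{TopSmallUnion}) on finite unions. The only cosmetic difference is that the paper applies normality up front (separating $F$ and $K$ into disjoint open sets $E,W$ and then covering $F$ inside $E$), whereas you cover $F$ inside $X\setminus K$ first and do the normality step afterward to produce $V$ with $\overline{U}\cap\overline{V}=\varnothing$; both arrangements are equally valid.
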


\begin{proof}
\begin{enumerate}
\item This follows immediately by applying the condition of the 
topological small boundary property to the compact sets $F$ and 
$K = \varnothing$, then disregarding the open set $V$ obtained.
\item If $(X,h)$ has the topological small boundary property, $x \in 
X$, and $U$ is an open neighborhood of $x$, then by the topological 
small boundary property, there exist open sets $V, W \subset X$ such 
that $\set{x} \subset V$, $X \setminus U \subset W$, $\overline{V} 
\cap \overline{W} = \varnothing$, and $\del V$ is topologically 
$h$-small. Since $X \setminus U \subset \overline{W}$, it follows that 
$\overline{V} \subset U$, as required. For the converse, let $F, K 
\subset X$ be compact. Since $X$ is normal, there exist open sets 
$E, W \subset X$ such that $F \subset E$, $K \subset W$, and $E 
\cap W = \varnothing$. By assumption, for each $x \in F$ there is an 
open set $U_{x} \subset X$ such that $x \in U_{x} \subset 
\overline{U}_{x} \subset E$ and $\del U_{x}$ is topologically 
$h$-small. Since $F$ is compact the open cover $\set{U_{x} \colon 
x \in X}$ contains a finite subcover $\set{U_{x_{1}},\ldots,U_{x_{n}}}$ 
for $F$. Set $U = \bigcup_{j=1}^{n} U_{x_{j}}$. Then $F \subset U 
\subset \overline{U} \subset W$, and $\del U$ is topologically 
$h$-small, since $\bigcup_{j=1}^{n} \del U_{x_{j}}$ is topologically 
$h$-small by Lemma \ref{TopSmallProps}(\ref{TopSmallUnion}) and 
$\del U \subset \bigcup_{j=1}^{n} \del U_{x_{j}}$. Finally, the 
compactness and local compactness of $K$ imply there is an open 
set $V$ with $K \subset V \subset \overline{V} \subset W$. Then 
$\overline{U} \cap \overline{V} = \bigcup_{j=1}^{n} \overline{U}_{x_{j}} 
\cap \overline{V} \subset E \cap W = \varnothing$. It follows that $(X,h)$ 
has the topological small boundary property.
\end{enumerate}
\end{proof}

\begin{cor}\label{TSBPImpliesSBP}
Let $(X,h)$ be as in Notation \ref{MinDynSys}. If $(X,h)$ has the 
topological small boundary property, then $(X,h)$ has the small 
boundary property. Consequently, $(X,h)$ has mean dimension 
zero.
\end{cor}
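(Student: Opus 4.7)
The corollary is essentially a packaging of tools already established, so the plan is short. The key observation is that the only substantive difference between the topological small boundary property and the small boundary property is the strength of the smallness condition imposed on $\partial V$: the former requires topological $h$-smallness, the latter only universal nullness. Since topological $h$-smallness has already been shown to imply universal nullness, the implication should fall out with no real work.

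Concretely, I would proceed as follows. Fix $x \in X$ and an open neighborhood $U$ of $x$. Apply Lemma \ref{TSBPEquiv}(\ref{TopSmallPoint}) to obtain an open neighborhood $V$ of $x$ with $V \subset \overline{V} \subset U$ (so in particular $V \subset U$ as required by Definition \ref{SBP}) and $\partial V$ topologically $h$-small. Now invoke Corollary \ref{TopSmallMeasZero} to conclude that $\partial V$ is universally null. Since $x$ and $U$ were arbitrary, this verifies the small boundary property for $(X,h)$.

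For the second assertion, the remark immediately preceding Definition \ref{SBP} records that for minimal dynamical systems the small boundary property is equivalent to mean dimension zero, by Theorem 6.2 of \cite{Lind} together with Theorem 5.4 of \cite{LiWe}. Hence having the small boundary property immediately yields $\mathrm{mdim}(X,h) = 0$.

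There is no real obstacle here; the entire content has been pre-assembled in Lemma \ref{TSBPEquiv} and Corollary \ref{TopSmallMeasZero}. The only point to take care with in the write-up is to cite Lemma \ref{TSBPEquiv}(\ref{TopSmallPoint}) rather than Definition \ref{TSBP} directly, since Definition \ref{TSBP} is phrased in terms of separating disjoint compact sets rather than shrinking a neighborhood of a single point.
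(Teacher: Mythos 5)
Your proof is correct and follows exactly the paper's route: apply Lemma \ref{TSBPEquiv}(\ref{TopSmallPoint}) to get a neighborhood with topologically $h$-small boundary, then pass to universal nullness via Corollary \ref{TopSmallMeasZero}, and cite the Lindenstrauss/Lindenstrauss--Weiss equivalence for the mean dimension claim. You have simply written out the steps the paper leaves compressed into one sentence.
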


\begin{proof}
Since Corollary \ref{TopSmallMeasZero} implies that topologically 
$h$-small sets are universally null, this follows immediately from 
Proposition \ref{TSBPEquiv}(\ref{TopSmallPoint}).
\end{proof}

\begin{prp}\label{FinDimTSBP}
Let $(X,h)$ be as in Notation \ref{MinDynSys}. Assume in addition 
that $\dim (X) = d < \infty$. Then $(X,h)$ has the topological small 
boundary property.
\end{prp}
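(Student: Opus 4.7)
By Lemma~\ref{TSBPEquiv}(\ref{TopSmallPoint}), it suffices to show that for every $x \in X$ and every open neighborhood $U$ of $x$, there is an open $V$ with $x \in V \subset \overline{V} \subset U$ and $\del V$ topologically $h$-small. I would argue by induction on $d = \dim X$.

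In the base case $d = 0$, the space $X$ is totally disconnected (since it is compact metrizable), so $x$ has a clopen neighborhood $V \subset U$, and $\del V = \varnothing$ is vacuously topologically $h$-small.

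For the inductive step with $d \geq 1$, I would first use the small-inductive-dimension characterization of covering dimension for compact metrizable spaces to pick an initial neighborhood $V_{0}$ of $x$ with $\overline{V_{0}} \subset U$ and $\dim \del V_{0} \leq d - 1$. The crux is to refine this to a neighborhood whose boundary is topologically $h$-small. I would parameterize a family of candidate neighborhoods $V_{r} = \{y \in X \colon f(y) < r\}$ for a suitable continuous function $f \colon X \to [0,1]$ with $f(x) = 0$ and $\{y \colon f(y) \leq r_{0}\} \subset U$, and argue that for a residual set of parameters $r \in (0, r_{0})$, the level-set boundary $\del V_{r} \subset \{y \colon f(y) = r\}$ is topologically $h$-small with smallness constant at most $d+1$. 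Here freeness of $h$ (which holds since $h$ is a minimal homeomorphism of the infinite compact metrizable space $X$, so no point is periodic) together with a dimension-theoretic intersection estimate should show that for each fixed $(d+2)$-tuple of distinct integers $n_{0}, \ldots, n_{d+1}$, the set of $r$ for which $\bigcap_{j=0}^{d+1} h^{n_{j}}(\del V_{r}) \neq \varnothing$ is a meager subset of $(0, r_{0})$. Since there are only countably many such tuples, a Baire category argument produces a single $r$ for which $\del V_{r}$ has the required smallness property, and one sets $V = V_{r}$.

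The main obstacle I foresee is the topological transversality step: in the smooth setting this follows from standard transversality of level sets, but in a general compact metrizable space one must substitute a dimension-theoretic intersection inequality, roughly of the form $\dim(A \cap B) \leq \dim A + \dim B - \dim X$ at generic parameter values, and carry out the bookkeeping for all $(d+2)$-fold intersections simultaneously. Once that is in place, the inductive step closes, and Lemma~\ref{TSBPEquiv}(\ref{TopSmallPoint}) promotes the local construction at each point to the full topological small boundary property.
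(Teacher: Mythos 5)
Your proposal identifies what the crux of the argument is, but it does not resolve it, and the paper's proof does not attempt to either: it simply invokes Lemma~3.7 of Kulesza \cite{Kul}, which is precisely the hard transversality result you flag as the main obstacle. Kulesza's lemma produces a closed cover of $X$ by small sets with nonempty interior whose boundaries satisfy the $(d+1)$-fold disjointness property for all tuples of distinct powers of $h$; the paper's proof is just two lines once that lemma is available: pick the element of the cover containing $x$ and use its interior together with Lemma~\ref{TSBPEquiv}(\ref{TopSmallPoint}).

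The gap in your sketch is concrete. The inequality $\dim(A \cap B) \leq \dim A + \dim B - \dim X$ ``at generic parameters'' that your inductive step rests on is not a theorem in the category of compact metrizable spaces. It is a transversality phenomenon available for submanifolds of smooth manifolds (or in certain controlled settings like ANRs with extra structure), but a general compact finite-dimensional metric space gives you no way to wiggle a level set $\{f = r\}$ into general position with respect to the translates $h^{n_j}(\{f = r\})$, which are all images of the \emph{same} set; varying $r$ moves them all simultaneously. What one can say by Hurewicz-type results is that for all but countably many $r$ the single level set $\{f = r\}$ has dimension at most $d-1$, but controlling a $(d+2)$-fold intersection of dynamical translates is an entirely different order of difficulty, and it is exactly the content of Kulesza's argument. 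A further, minor, point: after you choose $V_0$ with $\dim \del V_0 \leq d-1$, your sketch never actually invokes the inductive hypothesis on $\del V_0$, so the ``induction on $d$'' framing does no work; what you are really proposing is a single Baire category argument, and that is the step left unjustified. To repair the proposal along these lines you would, in effect, have to reprove Kulesza's Lemma~3.7; citing it directly, as the paper does, is the efficient route.
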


\begin{proof}
Let $x \in X$ and let $U \subset X$ be an open neighborhood of 
$X$. Let $\eps = \dist (x, X \setminus U) > 0$. We apply Lemma 
3.7 of \cite{Kul} with $i > 1/\eps$ there to obtain closed sets 
$F_{1},\ldots,F_{n} \subset X$ which cover $X$ such that for each 
$1 \leq j \leq n$, $\sint (F_{j}) \neq \varnothing$, $\diam  (F_{j}) < 
1/i$, $\overline{\sint (F_{j})} = F_{j}$, and such that, whenever 
$k(0),\ldots,k(d) \in \Z$ are distinct, then 
\[
h^{k(0)}(\del F_{j}) \cap h^{k(1)}(\del F_{j}) \cap \cdots \cap h^{k(d)} 
(\del F_{j}) = \varnothing.
\]
It follows that each $\del F_{j}$ is topologically $h$-small. Now, 
there is some $M \in \set{0,\ldots,n}$ such that $x \in F_{M}$. Then 
$\diam (F_{M}) < 1/i < \eps$ implies that, for any $y \in F_{M}$, we 
have $d(x,y) < \dist (x, X \setminus U)$. It follows that $y \in U$, and 
thus $F_{M} \subset U$. Then $V = \sint (F_{j})$ is an open 
neighborhood of $x$ such that $V \subset \overline{V} \subset U$ 
and $\del V$ is topologically $h$-small. Now Lemma 
\ref{TSBPEquiv}(\ref{TopSmallPoint}) implies that $(X,h)$ has 
the topological small boundary property.
\end{proof}

\begin{prp}\label{ProductTSBP}
Let $(X_{n},h_{n})_{n=1}^{\infty}$ be a sequence of dynamical 
systems consisting of infinite compact metrizable spaces $X_{n}$ 
and homeomorphisms $h_{n} \colon X_{n} \to X_{n}$. Let $X = 
\prod_{n \in \N} X_{n}$ and $h = \prod_{n \in \N} h_{n}$. Assume 
that each system $(X_{n},h_{n})$ has the topological small 
boundary property. Then the dynamical system $(X,h)$ has the 
topological small boundary property. If each system 
$(X_{n},h_{n})$ is minimal, then so is $(X,h)$.
\end{prp}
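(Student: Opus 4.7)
The plan for the topological small boundary property is to verify the pointwise criterion of Lemma \ref{TSBPEquiv}(\ref{TopSmallPoint}). Fix $x = (x_{n})_{n \in \N} \in X$ and an open neighborhood $U$ of $x$. Since cylinder sets form a base for the product topology, I can shrink $U$ and assume $U = \prod_{n \in \N} U_{n}$ where $U_{n} = X_{n}$ for all $n$ outside some finite set $S \subset \N$, and $U_{n}$ is open in $X_{n}$ for $n \in S$. For each $n \in S$, invoke the topological small boundary property of $(X_{n},h_{n})$ to produce an open set $V_{n}$ with $x_{n} \in V_{n} \subset \overline{V_{n}} \subset U_{n}$ and $\del V_{n}$ topologically $h_{n}$-small in $X_{n}$; for $n \notin S$ set $V_{n} = X_{n}$. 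Then $V = \prod_{n} V_{n}$ is an open neighborhood of $x$ with $\overline{V} = \prod_{n} \overline{V_{n}} \subset U$.

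The key step is to describe $\del V$ and show that it is topologically $h$-small. A point $(y_{n}) \in \overline{V} \setminus V$ must satisfy $y_{m} \in \del V_{m}$ for some $m$, and since $\del V_{n} = \varnothing$ whenever $n \notin S$, this forces such an $m$ to lie in $S$. Thus
\[
\del V \subset \bigcup_{m \in S} A_{m}, \quad \text{where } A_{m} = \Bigl(\ts{\prod_{n \neq m}} \overline{V_{n}}\Bigr) \times \del V_{m}.
\]
By Lemma \ref{TopSmallProps}(\ref{TopSmallUnion}) it suffices to show each $A_{m}$ is topologically $h$-small. If $k_{m}$ is the topological smallness constant of $\del V_{m}$ under $h_{m}$, and $d(0),\ldots,d(k_{m})$ are distinct integers, then because $h$ acts coordinatewise the intersection $\bigcap_{i=0}^{k_{m}} h^{d(i)}(A_{m})$ projects into the $m$-th factor into $\bigcap_{i=0}^{k_{m}} h_{m}^{d(i)}(\del V_{m}) = \varnothing$, so the intersection itself is empty. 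Hence $A_{m}$ is topologically $h$-small with constant at most $k_{m}$, and then $\del V$ is topologically $h$-small by Lemma \ref{TopSmallProps}(\ref{TopSmallBasic}) and (\ref{TopSmallUnion}). Applying Lemma \ref{TSBPEquiv}(\ref{TopSmallPoint}) concludes the topological small boundary property for $(X,h)$.

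The minimality assertion is the part I expect to be the main obstacle. Infinite products of minimal homeomorphisms under the product map need not be jointly minimal in general (consider two copies of the same irrational circle rotation, where the diagonal is a proper invariant closed set), so the argument cannot be a purely formal deduction from coordinatewise minimality. Modulo an implicit disjointness or independence hypothesis on the systems $(X_{n},h_{n})$, the natural approach is to fix a countable base of cylinder sets, fix $x \in X$, and for each basic open $U$ find $k \in \Z$ with $h^{k}(x) \in U$ by combining the minimality in the finitely many coordinates that constrain $U$ with a Baire-category or simultaneous recurrence argument. Without an extra hypothesis this will fail, so I would expect the full statement to require either a strengthening of the assumption on the factors or a restriction to a subclass where joint minimality automatically holds.
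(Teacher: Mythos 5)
Your proof of the topological small boundary property follows the same route as the paper's: shrink to a cylinder neighborhood $U = \prod U_{n}$, apply the pointwise criterion of Lemma~\ref{TSBPEquiv}(\ref{TopSmallPoint}) in each coordinate in the finite support, and take $V = \prod V_{n}$. Where you differ is in how you control the smallness constant of $\del V$. The paper asserts without argument that $\del V$ has topological smallness constant $\max \set{m_{n} \colon 1 \leq n \leq N}$, but this is not justified and is in fact dubious: a point of $\del V$ lands in the boundary ``via'' some coordinate, and under different translates that coordinate can change, so the intersection $\bigcap_{i=0}^{m} h^{d(i)}(\del V)$ need not be empty when $m$ is only $\max m_{n}$. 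Your decomposition $\del V \subset \bigcup_{m \in S} A_{m}$ pins down a single coordinate for each piece, shows each $A_{m}$ has constant at most $k_{m}$, and then correctly applies Lemma~\ref{TopSmallProps}(\ref{TopSmallUnion}) to obtain a (larger but valid) bound $\sum_{m \in S} k_{m}$. So your version of this part is more careful than the paper's, and the conclusion is the same.

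Your concern about the minimality sentence is well-founded, and you should not try to prove it: the paper's own proof is silent on minimality, and the assertion is false as stated. Your diagonal counterexample (two copies of the same irrational rotation, where the diagonal in $S^1 \times S^1$ is a proper closed invariant set) shows that coordinatewise minimality does not imply minimality of the product. In the only place the paper uses this proposition with minimal factors, Corollary~\ref{HinesExample}, the minimality of the infinite product is secured separately by the rational independence of the rotation angles $\te_{n}$, not by the general proposition. The correct reading is that the proposition establishes the topological small boundary property for the product, and minimality must be verified independently for the system under consideration.
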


\begin{proof}
Let $x \in X$ and let $U \subset X$ be a neighborhood of $x$. 
For each $n \in \N$, let $\pi_{n} \colon X \to X_{n}$ denote the 
projection map. We may write $U = \prod_{n \in \N} U_{n}$, 
where each $U_{n} \subset X_{n}$ is open and $U_{n} = X_{n}$ 
for all but finitely many choices of $n$. Without loss of generality 
we may assume that there is an $N \in \N$ such that $U_{n} \neq 
X_{n}$ for $1 \leq n \leq N$ and $U_{n} = X_{n}$ for $n \geq N$.
For each $1 \leq n \leq N$, $U_{n} = \pi_{n}(U)$ is an open 
neighborhood of $x$ in $X_{n}$, and the topological small 
boundary property for $(X_{n},h_{n})$ implies that there exists 
an open neighborhood $V_{n} \subset X_{n}$ of $\pi_{n}(x)$ 
such that $V_{n} \subset \overline{V}_{n} \subset U_{n}$ with 
$\del V_{n}$ topologically $h$-small. Let $m_{n}$ denote the 
topological smallness constant for $\del V_{n}$. Define an 
open neighborhood $V \subset X$ of $x$ by 
\[
V = \left( \ts{ \prod_{n \leq N} V_{n} } \right) \times \left( 
\ts{ \prod_{n > N} X_{n} } \right).
\]
Then $x \in V \subset \overline{V} \subset U$ and $\del V$ is 
topologically $h$-small, with topological smallness constant 
$m = \max \set{m_{n} \colon 1 \leq n \leq N}$.
\end{proof}

Taylor Hines \cite{Hines} has constructed an example of a 
minimal infinite-dimensional dynamical system which is of the 
form considered in Proposition \ref{ProductTSBP}. Since that 
work has not yet been published, we describe the system here.

\begin{cor}\label{HinesExample}
There is a minimal dynamical system $(X,h)$ with $\dim (X) 
= \infty$ that has the topological small boundary property.
\end{cor}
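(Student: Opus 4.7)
The plan is to deduce the corollary from Propositions \ref{FinDimTSBP} and \ref{ProductTSBP}, taking as input the sequence of dynamical systems constructed by Hines \cite{Hines}. Hines produces a sequence $(X_n, h_n)_{n=1}^{\infty}$ of minimal dynamical systems on infinite compact metrizable spaces such that each $X_n$ is finite-dimensional, the dimensions $\dim(X_n)$ are unbounded, and the product system $(X,h) = \prod_{n \in \N}(X_n, h_n)$, with $h = \prod_n h_n$, is itself minimal.

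Granting this input, the verification is short. Since each $X_n$ is finite-dimensional, Proposition \ref{FinDimTSBP} implies that each $(X_n, h_n)$ has the topological small boundary property. Proposition \ref{ProductTSBP}, applied to this sequence, then yields that $(X,h)$ is a minimal dynamical system with the topological small boundary property. To see that $\dim(X) = \infty$, fix base points $y_m \in X_m$ for each $m \in \N$; for any fixed $n$, the map $x \mapsto (y_1,\ldots,y_{n-1},x,y_{n+1},\ldots)$ is a topological embedding of $X_n$ into $X$, so the subspace theorem for covering dimension gives $\dim(X) \geq \dim(X_n)$. Unboundedness of the $\dim(X_n)$ then forces $\dim(X) = \infty$.

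The main obstacle lies entirely in Hines' construction: producing $(X_n, h_n)$ whose product is actually minimal. Minimality of individual factors does not in general propagate to a product; already for two copies of a single irrational rotation $(S^1, R_\alpha)$, the product $(S^1 \times S^1, R_\alpha \times R_\alpha)$ fails to be minimal, since every line of slope one is invariant. Hines' construction must therefore arrange the factors to be dynamically independent enough that no nontrivial closed invariant subsets appear in the infinite product. Our role here is simply to observe that the topological small boundary property, unlike some other regularity properties, survives the passage to an infinite product of minimal finite-dimensional systems, so that once such a Hines sequence is in hand the corollary is immediate.
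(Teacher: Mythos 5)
Your scaffolding is the same as the paper's: apply Proposition~\ref{FinDimTSBP} to each factor and then Proposition~\ref{ProductTSBP} to the product. But you stop short of actually producing the example and instead cite Hines for a sequence with properties you specify; the paper does not leave this to the reference. It gives the explicit system: choose irrational rotation numbers $\{\theta_n\}$ that are (together with $1$) rationally independent, let $h_{\theta_n}$ be the corresponding rotation on $S^1$, and take $X = \prod_n S^1$ with $h = \prod_n h_{\theta_n}$. Minimality is then not something to be ``granted'' from an outside construction --- it follows because $(X,h)$ is the inverse limit of the finite tori $(\mathbb{T}^n, \prod_{j\leq n} h_{\theta_j})$, each of which is minimal by rational independence, and an inverse limit of minimal systems is minimal. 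So you correctly identify where the genuine work lies but do not do it; the paper does.

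Two further remarks. First, your observation that minimality of the factors does not propagate to the product (the $R_\alpha \times R_\alpha$ example) is valid and in fact points at a real defect: Proposition~\ref{ProductTSBP} asserts that a product of minimal systems is minimal, but its proof does not address minimality and the statement is false in that generality. The Corollary survives only because of the specific rational-independence/inverse-limit structure, not because of the general product claim. Second, your argument that $\dim(X) = \infty$ requires $\dim(X_n)$ to be unbounded, which does not hold for the paper's sequence of circles; the fix in that setting is to embed the partial products $\mathbb{T}^n$ (rather than single factors $X_n$) into $X$, which already gives $\dim(X) \geq n$ for all $n$.
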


\begin{proof}
Let $\set{ \te_{n} \colon n \in \N}$ be a collection of irrational 
numbers $\te_{n}$ that is rationally independent. For $n \in 
\N$ let $h_{\te_{n}} \colon S^{1} \to S^{1}$ be given by 
$h_{\te_{n}} (\zt) = e^{2 \pi i \te_{n}} \zt$. For $n \in \N$, let 
$X_{n} = \prod_{j=1}^{n} S^{1}$ and $h_{n} = \prod_{j=1}^{n} 
h_{\te_{j}}$. Then each system $(X_{n},h_{n})$ is minimal by 
the rational independence of the irrational numbers $\te_{j}$. 
Define $(X,h) = \invlim (X_{n},h_{n})$, where the connecting 
maps $X_{n+1} \to X_{n}$ are the obvious projection maps. 
Then $X = \prod_{n \in \N} S^{1}$ and $h = \prod_{n \in \N} 
h_{\te_{n}}$. Each $X_{n}$ is finite-dimensional, and so 
$(X_{n},h_{n})$ has the topological small boundary property 
by Proposition \ref{FinDimTSBP}. Now the result follows by 
Proposition \ref{ProductTSBP}.
\end{proof}

\begin{lem}\label{TopSmallInnerApprox}
Suppose that $(X,h)$ has the topological small boundary 
property, and let $\eps > 0$ be given. Then for any closed 
set $F \subset X$ and any open set $U \subset X$ with $F 
\subset U$, there exist $K \subset X$ closed and $V \subset 
X$ open such that $F \subset K \subset V \subset \overline{V} 
\subset U$ and such that $\mu (V \setminus K) < \eps$ for 
all $\mu \in M_{h}(X)$.
\end{lem}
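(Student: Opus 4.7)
The plan is to sandwich $F$ inside an open set with topologically $h$-small (hence universally null) boundary, and then perform an interior approximation using the measure-theoretic tools already assembled in Section \ref{SectionPreliminaries}.

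First, I would apply the topological small boundary property directly to the disjoint compact sets $F$ and $X \setminus U$. Definition \ref{TSBP} then produces open sets $W, W' \subset X$ with $F \subset W$, $X \setminus U \subset W'$, $\overline{W} \cap \overline{W'} = \varnothing$, and $\partial W$ topologically $h$-small. The containment $X \setminus U \subset \overline{W'}$ together with $\overline{W} \cap \overline{W'} = \varnothing$ forces $\overline{W} \subset U$, so this $W$ is the desired thick neighborhood of $F$ inside $U$. (If $U = X$ the compact set $X \setminus U$ is empty and one may simply take $W = X$, which has empty boundary.)

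Next, I would invoke Corollary \ref{TopSmallMeasZero} to promote topological smallness of $\partial W$ to universal nullity. This is precisely the hypothesis needed to apply Corollary \ref{MeasApproxZeroBdy}(\ref{ClosedInsideOpen}) to the open set $W$, which yields a closed set $K_{0} \subset W$ with $\sint(K_{0}) \neq \varnothing$ and $\mu(W \setminus K_{0}) < \eps$ for every $\mu \in M_{h}(X)$.

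Finally, I would set $V = W$ and $K = F \cup K_{0}$. Then $K$ is closed, $F \subset K \subset V$ (since both $F$ and $K_{0}$ lie in $W$), and $\overline{V} = \overline{W} \subset U$ from the first step. The measure estimate follows from $V \setminus K \subset W \setminus K_{0}$, which gives $\mu(V \setminus K) \leq \mu(W \setminus K_{0}) < \eps$. Since every step is a direct application of a tool already established in the excerpt, I do not anticipate any real obstacle: the whole argument is a short chaining of the topological small boundary property, Corollary \ref{TopSmallMeasZero}, and Corollary \ref{MeasApproxZeroBdy}.
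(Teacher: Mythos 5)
Your proof is correct. Both you and the paper begin identically: apply Definition \ref{TSBP} to the disjoint compact sets $F$ and $X \setminus U$, obtaining $W$ open with $F \subset W \subset \overline{W} \subset U$ and $\del W$ topologically $h$-small. Where the two arguments diverge is in the second half. The paper constructs the measure estimate by hand, introducing the shrinking annuli $E_n = \set{x \colon \dist(x, \del W) < \dt/2^n}$ around $\del W$, invoking Dini's theorem (via the argument from Lemma \ref{ArbSmallOpenSet}) to find $N$ with $\mu(E_N) < \eps$, and then taking $K = \overline{W} \setminus E_N$, $V = W \cup E_N$ so that $V \setminus K = E_N$ exactly. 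You instead observe that the needed inner approximation has already been packaged as Corollary \ref{MeasApproxZeroBdy}(\ref{ClosedInsideOpen}), which applies to $W$ once Corollary \ref{TopSmallMeasZero} converts topological smallness of $\del W$ into universal nullity; then $V = W$ and $K = F \cup K_0$ finish the job. Your route is shorter and reuses the preliminary machinery more efficiently, while the paper's explicit construction gives finer control over the geometry of $V \setminus K$ (it is literally a thin collar around $\del W$), which is not needed for this statement but reflects the style of later arguments in the paper. Either version is a valid proof.
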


\begin{proof}
Let $\eps$, $F$, and $U$ be as given in the statement of the 
Lemma. The topological small boundary property implies 
there are open sets $W,T \subset X$ such that $F \subset W$, 
$X \setminus U \subset T$, $\overline{W} \cap \overline{T} = 
\varnothing$, and $\del W$ is topologically $h$-small. The 
compactness of $F$ and $\overline{W}$ imply that 
\[
\dt = \min \set{ \dist (F, X \setminus W), \dist (\overline{W}, X \setminus 
U) } > 0.
\]
For $n \geq 1$ define $E_{n} = \set{x \in X \colon \dist (x, \del W) < 
\dt/2^{n}}$. An argument entirely analogous to that given in the proof 
of Lemma \ref{ArbSmallOpenSet} implies that there is an $N \in \N$ 
such that $\mu (E_{N}) < \eps$ for all $\mu \in M_{h}(X)$. Now set 
$K = \overline{W} \setminus E_{N}$ and $V = W \cup E_{N}$. If $x 
\in F$ then $\dist (x, \del W) = \dist (x, X \setminus W) \geq \dt > 
\dt/2^{N}$, and so $x \not \in E_{N}$, which implies that $F \subset 
K$. Since $\del W \subset E_{N}$ we have $V = W \cup E_{N} = 
\overline{W} \cup E_{N}$, and so $K \subset \overline{W} \subset V$. 
Let $x \in \overline{E}_{N}$. Then 
\[
\dist (x,X \setminus U) = \dist (x,\del U) \geq \dist (\overline{W}, 
\del U) - \dist (x, \del W) > \dt - \dt/2^{N} > 0,
\]
which implies that $x \in U$. This shows that $\overline{E}_{N} 
\subset U$, and as $\overline{W} \subset U$, it follows that 
$\overline{V} = \overline{W} \cup \overline{E}_{N} \subset U$. Finally, 
$V \setminus K = E_{N}$ by construction, and so we have $\mu 
(V \setminus K) < \eps$ for all $ \mu \in M_{h}(X)$.
\end{proof}

\begin{prp}\label{TopSmallTopSmallInnerApprox}
Suppose that $(X,h)$ has the topological small boundary property, 
and let $\eps > 0$ be given. Then for any closed set $F \subset X$ 
and any open set $U \subset X$ with $F \subset U$, there is an 
closed set $K \subset X$ and an open set $V \subset X$ such that 
\begin{enumerate}
\item $F \subset \sint (K) \subset K \subset V \subset \overline{V} 
\subset U$;
\item $\del K$, $\del U$ are topologically $h$-small;
\item $\mu (V \setminus K) < \eps$ for all $\mu \in M_{h}(X)$.
\end{enumerate}
\end{prp}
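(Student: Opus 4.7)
The plan is to strengthen Lemma \ref{TopSmallInnerApprox} by arranging for the boundaries of the sets $K$ and $V$ themselves to be topologically $h$-small, not merely universally null. The idea is to invoke the topological small boundary property twice: once around $F$ to build $V_1$ with $\del V_1$ topologically $h$-small, and once around $\del V_1$ to build a neighborhood with both topologically $h$-small boundary and small universal measure.

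First, applying Lemma \ref{TSBPEquiv}(\ref{TopSmallPoint}) pointwise to each $x \in F$ inside $U$, I obtain open neighborhoods $W_x$ with $\overline{W}_x \subset U$ and $\del W_x$ topologically $h$-small. Compactness of $F$ yields a finite subcover, and setting $V_1 = \bigcup_{i} W_{x_i}$ gives $F \subset V_1 \subset \overline{V}_1 \subset U$ and $\del V_1 \subset \bigcup_{i} \del W_{x_i}$ topologically $h$-small by Lemma \ref{TopSmallProps}(\ref{TopSmallUnion}). In particular $\del V_1$ is universally null by Corollary \ref{TopSmallMeasZero}.

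Using the Dini argument from the proof of Lemma \ref{ArbSmallOpenSet}, I next choose $N \in \N$ so large that $E_N := \set{x \in X : \dist (x, \del V_1) < 1/N}$ satisfies $\mu (E_N) < \eps$ for all $\mu \in M_h(X)$ and moreover $1/N < \ts{\frac{1}{2}} \dist (F, X \setminus V_1)$. Then I refine $E_N$: for each $y \in \del V_1 \subset E_N \cap U$, Lemma \ref{TSBPEquiv}(\ref{TopSmallPoint}) applied inside the open set $E_N \cap U$ produces an open neighborhood $N_y$ of $y$ with $\overline{N}_y \subset E_N \cap U$ and $\del N_y$ topologically $h$-small. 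A finite subcover of the compact set $\del V_1$ yields an open set $E = \bigcup_{j} N_{y_j}$ satisfying $\del V_1 \subset E$, $\overline{E} \subset E_N \cap U$, and $\del E \subset \bigcup_{j} \del N_{y_j}$ topologically $h$-small by Lemma \ref{TopSmallProps}(\ref{TopSmallUnion}).

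I would then set $K = \overline{V}_1 \setminus E$ (closed) and $V = V_1 \cup E$ (open); verification is direct. One computes $V \setminus K = E$, so $\mu (V \setminus K) \leq \mu (E_N) < \eps$, and $\overline{V} = \overline{V}_1 \cup \overline{E} \subset U$. The choice $1/N < \ts{\frac{1}{2}} \dist (F, X \setminus V_1)$ puts a small ball around each point of $F$ inside $V_1 \setminus \overline{E}$, giving $F \subset \sint (K)$. Since $\del \overline{V}_1 \subset \del V_1 \subset E$, the identity $\sint (A \cap B) = \sint (A) \cap \sint (B)$ yields $\del K \subset \del E$, and $\del V \subset \del V_1 \cup \del E$ is standard; both are topologically $h$-small by Lemma \ref{TopSmallProps}(\ref{TopSmallUnion}). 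The main obstacle is synchronizing the two applications of the topological small boundary property with the measure-theoretic approximation so that $E$ inherits both a small universal measure and a topologically $h$-small boundary; this is accomplished by first selecting $N$ to bound $\mu (E_N)$ and then confining the second application of the topological small boundary property to $E_N \cap U$.
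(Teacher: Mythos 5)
Your proof is correct, but it takes a genuinely different route from the paper's. The paper first invokes Lemma \ref{TopSmallInnerApprox} to get a measure-controlled pair $K_0 \subset V_0$ whose boundaries are \emph{not} controlled, and then sandwiches a closed set with topologically $h$-small boundary between $K_0$ and $V_0$ (applying the topological small boundary property twice, to $K_0$/$X \setminus V_0$ and then to $\overline{E}_0$/$\overline{E}_1$), relying on the inclusions $K_0 \subset K \subset V \subset V_0$ to inherit the measure bound. You instead build a single open set $V_1$ with $F \subset V_1 \subset \overline{V}_1 \subset U$ whose boundary is \emph{already} topologically $h$-small, run the Dini argument on the universally null set $\del V_1$ to obtain $E_N$ small in measure, and then apply the topological small boundary property a second time to refine $E_N$ to an open $E \supset \del V_1$ with $\del E$ topologically $h$-small; you finish by taking $K = \overline{V}_1 \setminus E$ and $V = V_1 \cup E$. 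Your construction has the pleasant feature that $V \setminus K = E$ exactly, so the measure estimate is immediate, and $\del K \subset \del E$ and $\del V \subset \del V_1 \cup \del E$ are both topologically $h$-small by Lemma \ref{TopSmallProps}(\ref{TopSmallUnion}). What the paper's route buys is modularity — it reuses Lemma \ref{TopSmallInnerApprox} as a black box; what yours buys is transparency of the measure control and a cleaner identification of the set $V \setminus K$. The key verification you compress (``$\del K \subset \del E$'') does go through: any point of $\del K$ lying in $\del \overline{V}_1 \subset \del V_1 \subset E$ has a full open neighborhood inside $E$, hence cannot be a limit of $K = \overline{V}_1 \setminus E$; spelling that out would strengthen the write-up.
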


\begin{proof}
By Lemma \ref{TopSmallInnerApprox}, there is a closed set $K_{0} 
\subset X$ and an open set $V_{0} \subset X$ such that $F \subset 
K_{0} \subset V_{0} \subset \overline{V}_{0} \subset U$ and such 
that $\mu (V_{0} \setminus K_{0}) < \eps$ for all $\mu \in M_{h}(X)$. 
The topological small boundary property implies there exist open 
sets $E_{0}, E_{1} \subset X$ such that $K_{0} \subset E_{0}$, 
$X \setminus V_{0} \subset E_{1}$, $\overline{E}_{0} \cap 
\overline{E}_{1} = \varnothing$, $\del E_{0}$ and is topologically 
$h$-small. We also immediately obtain that $\overline{E}_{0} 
\subset V_{0}$. Using the topological small boundary property 
again, we obtain open sets $W_{0}, W_{1} \subset X$ such that 
$\overline{E}_{0} \subset W_{0}$, $\overline{E}_{1} \subset W_{1}$, 
$\overline{W}_{0} \cap \overline{W}_{1} = \varnothing$, and $\del 
W_{1}$ is topologically $h$-small. Set $K = \overline{E}_{0}$ and 
$V = X \setminus \overline{W}_{1}$. 

To see that (1) is satisfied, observe first that $F \subset K_{0} \subset 
E_{0} = \sint (K) \subset K$. Next, $\overline{W}_{0} \cap 
\overline{W}_{1} = \varnothing$ implies that $\overline{W}_{0} 
\subset V$, and so $K = \overline{E}_{0} \subset W_{0} \subset 
\overline{W}_{0} \subset V$. Finally, if $x \in V$ then $x \not \in 
\overline{W}_{1}$, and so $x \not \in X \setminus V_{0}$, which 
implies that $x \in V_{0}$. This gives $V \subset V_{0}$, and so 
$\overline{V} \subset \overline{V}_{0} \subset U$. Together, these 
three observations give (1). For (2), we observe that by construction,
 $\del K = \del \overline{E}_{0} = \del E_{0}$ and $\del V = \del (X 
 \setminus \overline{W}_{1}) = \del W_{1}$. Therefore, both $\del K$ 
 and $\del V$ are topologically $h$-small. Finally, the containments 
 $K_{0} \subset K \subset V \subset V_{0}$ imply that $\mu (V 
 \setminus K) \leq \mu (V_{0} \setminus K_{0}) < \eps$ for all $\mu \in 
 M_{h}(X)$. This gives (3) and completes the proof.
\end{proof}

\begin{cor}\label{TopSmallRegularity}
Suppose that $(X,h)$ has the topological small boundary property.
\begin{enumerate}
\item\label{TopSmallInnerRegular} For any $\eps > 0$ and any 
open set $U \subset X$ with $\del U$ universally null, there is an 
open set $V \subset X$ such that $\overline{V} \subset U$, $\del V$ 
is topologically $h$-small, and $\mu (U \setminus \overline{V}) < 
\eps$ for all $\mu \in M_{h}(X)$.
\item\label{TopSmallCptOuterRegular} For any $\eps > 0$, any 
open set $U \subset X$, and any closed set $F \subset U$ with 
$\del F$ universally null, there is an open set $V \subset X$ such 
that $F \subset V \subset \overline{V} \subset U$, $\del V$ is 
topologically $h$-small, and $\mu (V \setminus F) < \eps$ for all 
$\mu \in M_{h}(X)$.
\item\label{TopSmallOuterRegular} For any $\eps > 0$ and any 
closed set $F \subset X$ with $\del F$ universally null, there is an 
open set $V \subset X$ such that $F \subset V$, $\del V$ is 
topologically $h$-small, and $\mu (V \setminus F) < \eps$ for all 
$\mu \in M_{h}(X)$.
\end{enumerate}
\end{cor}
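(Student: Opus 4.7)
The plan is to derive each of the three statements by combining Proposition \ref{TopSmallTopSmallInnerApprox} with the appropriate part of Corollary \ref{MeasApproxZeroBdy}. Proposition \ref{TopSmallTopSmallInnerApprox} already packages the topological small boundary property into a strong approximation result, so the real work in this corollary is just threading together the measure-approximation results (which use only that $\del U$ or $\del F$ is universally null) with the topological strengthening that the TSBP affords.

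For (\ref{TopSmallInnerRegular}), I would first apply Corollary \ref{MeasApproxZeroBdy}(\ref{OpenInsideOpen}) to $U$ to produce an open set $E \subset X$ with $\overline{E} \subset U$ and $\mu(U \setminus \overline{E}) < \eps$ for all $\mu \in M_h(X)$. Then I would apply Proposition \ref{TopSmallTopSmallInnerApprox} to the closed set $\overline{E}$ inside $U$ with any positive parameter, obtaining closed $K$ and open $V$ with $\overline{E} \subset K \subset V \subset \overline{V} \subset U$ and $\del V$ topologically $h$-small. The containment $\overline{E} \subset \overline{V}$ immediately gives $U \setminus \overline{V} \subset U \setminus \overline{E}$, so the measure bound transfers.

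For (\ref{TopSmallCptOuterRegular}), I would begin with Corollary \ref{MeasApproxZeroBdy}(\ref{OpenOutsideClosed}) applied to $F$ to produce an open set $E_0 \supset F$ with $\mu(E_0 \setminus F) < \eps/2$; intersecting with $U$ if necessary, set $E = E_0 \cap U$, which is open, contains $F$, lies in $U$, and satisfies $\mu(E \setminus F) < \eps/2$. Then apply Proposition \ref{TopSmallTopSmallInnerApprox} to the closed set $F$ inside the open set $E$, with parameter $\eps/2$. This yields a closed $K$ and open $V$ with $F \subset \sint(K) \subset K \subset V \subset \overline{V} \subset E \subset U$, $\del V$ topologically $h$-small, and $\mu(V \setminus K) < \eps/2$. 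Splitting $V \setminus F = (V \setminus K) \cup (K \setminus F)$ and noting $K \setminus F \subset E \setminus F$, I get $\mu(V \setminus F) < \eps/2 + \eps/2 = \eps$ for every $\mu \in M_h(X)$.

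Part (\ref{TopSmallOuterRegular}) is then the special case of (\ref{TopSmallCptOuterRegular}) with $U = X$, since the condition $\overline{V} \subset U$ becomes automatic. The only real obstacle here is bookkeeping: ensuring that the containments line up so that the measure-smallness passes from the initial $E$ (which controls $U \setminus \overline{E}$ or $E \setminus F$) through to the final $V$ produced by Proposition \ref{TopSmallTopSmallInnerApprox}, and choosing the $\eps/2$ splits so that the combined error does not exceed $\eps$. All the genuine topological-smallness work has already been absorbed into Proposition \ref{TopSmallTopSmallInnerApprox}, and no further use of the Rokhlin-tower or thin-set machinery is needed here.
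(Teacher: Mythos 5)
Your proposal is correct and follows essentially the same route as the paper: reduce each statement to an application of Corollary \ref{MeasApproxZeroBdy} (parts (\ref{ClosedInsideOpen})/(\ref{OpenInsideOpen}) and (\ref{OpenOutsideClosed})) followed by Proposition \ref{TopSmallTopSmallInnerApprox}, with (3) obtained from (2) by taking $U = X$. The only cosmetic difference is in (2), where your $\eps/2$ split is unnecessary: since $V \subset \overline{V} \subset E$, one has $V \setminus F \subset E \setminus F$ directly, so the single application of \ref{MeasApproxZeroBdy}(\ref{OpenOutsideClosed}) with tolerance $\eps$ already yields the bound.
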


\begin{proof}
\begin{enumerate}
\item By Corollary \ref{MeasApproxZeroBdy}(\ref{ClosedInsideOpen}), 
there is a closed set $K \subset U$ such that $\mu (U \setminus K) < 
\eps$ for all $\mu \in M_{h}(X)$. Applying Proposition 
\ref{TopSmallTopSmallInnerApprox} to $K$ and $U$, we obtain an 
open set $V \subset X$ such that $K \subset V \subset \overline{V} 
\subset U$ and $\del V$ is topologically $h$-small. Then $K \subset 
\overline{V}$ implies that $\mu (U \setminus \overline{V}) \leq \mu 
(U \setminus K) < \eps$ for all $\mu \in M_{h}(X)$. 
\item By Corollary \ref{MeasApproxZeroBdy}(\ref{OpenOutsideClosed}), 
there is an open set $E \subset X$ such that $F \subset E$ and $\mu 
(E \setminus F) < \eps$ for all $\mu \in M_{h}(X)$. Applying Proposition 
\ref{TopSmallTopSmallInnerApprox} to $F$ and $U \cap E$ (which is 
an open set containing $F$), we obtain an open set $V \subset X$ 
such that $F \subset V \subset \overline{V} \subset U \cap E \subset U$ 
and $\del V$ is topologically $h$-small. Then $V \subset U \cap E 
\subset E$ implies that $\mu (V \setminus F) \leq \mu (E \setminus F) < 
\eps$ for all $\mu \in M_{h}(X)$.
\item This follows immediately from (\ref{TopSmallCptOuterRegular}) 
by taking $U = X$.
\end{enumerate}
\end{proof}

\section{The Dynamic Comparison Property}\label{SectionDCP}

In Lemma 2.5 of \cite{GW}, it is shown that if $(X,h)$ is a Cantor 
minimal system and $A,B \subset X$ are compact-open sets with 
$\mu (A) < \mu (B)$ for all $\mu \in M_{h}(X)$, then in fact there is 
a decomposition $\bigcup A_{i}$ of $A$ such that the sets $A_{i}$ 
can be translated disjointly into $B$. The following definition is an 
attempt to formulate a similar condition for more general minimal 
dynamical systems. In general, we cannot expect non-trivial 
compact-open sets to exist, and so we must instead use open and 
closed sets with some assumed good behavior of their boundaries.

\begin{dfn}\label{DCP}
Let $(X,h)$ be as in Notation \ref{MinDynSys}. 
We say $(X,h)$ has the {\emph{dynamic comparison property}} if 
whenever $U \subset X$ is open and $C \subset X$ is closed with 
$\del C$, $\del U$ universally null and $\mu (C) < \mu (U)$ for every 
$\mu \in M_{h}(X)$, then there are $M \in \N$, continuous functions 
$f_{j} \colon X \to [0,1]$ for $0 \leq j \leq M$, and $d(0),\ldots,d(M) \in 
\Z$ such that $\sum_{j=0}^{M} f_{j} = 1$ on $C$, and such that the 
sets $\supp (f_{j} \circ h^{-d(j)})$ are pairwise disjoint subsets of $U$ 
for $0 \leq j \leq M$.
\end{dfn}

The next lemma gives a condition that implies the dynamic 
comparison property holds for systems with the topological small 
boundary property, and is easier to verify because of the assumed 
additional structure for the closed and open sets involved.

\begin{lem}\label{DCPSimplification}
Let $(X,h)$ be as in Notation \ref{MinDynSys} and assume that 
$(X,h)$ has the topological small boundary property. 
Suppose that $X$ has the property that if whenever $F \subset X$
is closed with $\sint (F) \neq \varnothing$ and $\del F$ 
topologically $h$-small, $E \subset X$ is open, and there exists an 
open set $E_{0} \subset E$ with $\overline{E}_{0} \subset E$,
$\overline{E_{0}} \cap F = \varnothing$, $\del E_{0}$ topologically 
$h$-small, and $\mu (F) < \mu (E_{0})$ for every $\mu \in M_{h}(X)$, 
then there exist $M \in \N$, continuous functions $f_{j} \colon X \to 
[0,1]$ for $0 \leq j \leq M$, and $d(0),\ldots,d(M) \in \Z$ such that 
$\sum_{j=0}^{M}f_{j} = 1$ on $F$, and such that the sets $\supp 
(f_{j} \circ h^{-d(j)})$ are pairwise disjoint subsets of $E$ for $0 \leq 
j \leq M$. Then $(X,h)$ has the dynamic comparison property.
\end{lem}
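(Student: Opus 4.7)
The plan is to apply the simplified hypothesis not to $(C,U)$ directly, but to many pieces of a Rokhlin-tower decomposition of $C$ simultaneously, each paired with its own disjoint ``slot'' inside $U$. First I would set $\delta := \inf_{\mu \in M_h(X)}[\mu(U) - \mu(C)] > 0$ by Lemma~\ref{OpenPositiveMeas}, dispose of the trivial case $C = \varnothing$, and fix $\eta \in (0,\delta/10)$. Via Corollary~\ref{TopSmallRegularity}(\ref{TopSmallOuterRegular}) I would obtain an open $W \supset C$ with $\del W$ topologically $h$-small and $\mu(W \setminus C) < \eta$, and set $F := \overline{W}$; then $\del F \subset \del W$ is topologically $h$-small by Lemma~\ref{TopSmallProps}(\ref{TopSmallBasic}), $\sint(F) \supset W \neq \varnothing$, and $\mu(F) < \mu(C) + \eta$. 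Corollary~\ref{TopSmallRegularity}(\ref{TopSmallInnerRegular}) applied to $U$ gives an open $E$ with $\overline{E} \subset U$, $\del E$ topologically $h$-small, and $\mu(E) > \mu(U) - \eta$; in particular $E \setminus F$ is nonempty of measure at least $\delta - 2\eta$.

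Inside $E \setminus F$ I would use the TSBP (via Lemma~\ref{TSBPEquiv}(\ref{TopSmallPoint})) together with the argument of Lemma~\ref{ThinHasMeasZero} to pick a closed $Y \subset E \setminus F$ with $\sint(Y) \neq \varnothing$, $\del Y$ topologically $h$-small, and $\mu(Y) < \eta$. Applying Theorem~\ref{RokhlinTower} with base $Y$ and partition $\mathcal{P} := \{F, X \setminus F\}$ (both members have nonempty interior because $\mu(F) < 1$ for small $\eta$) produces closed $Z_0,\ldots,Z_m \subset Y$, interiors $Z_k^{(0)}$, and heights $t(0) \leq \cdots \leq t(m)$ so that each open level $h^j(Z_k^{(0)})$ lies entirely in $F$ or entirely in $X \setminus F$, the levels are pairwise disjoint, and $N := \bigcup_{k,j} h^j(\del Z_k)$ is universally null (each $\del Z_k$ is a finite union of translates of $\del Y$ and $\del F$, hence topologically $h$-small by Lemma~\ref{TopSmallProps}). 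Writing $L_F := \{(k,j) : h^j(Z_k^{(0)}) \subset F\}$, the total-mass estimate $\sum_{L_F} \mu(Z_k) = \mu(F) < \mu(U) - \delta + \eta$ leaves room in $U$, and I would carve pairwise disjoint open slots $E_{k,j} \subset U$ with $\del E_{k,j}$ topologically $h$-small, $\overline{E}_{k,j} \cap h^j(Z_k) = \varnothing$, and $\mu(E_{k,j}) > \mu(Z_k) + \eta'$ for small $\eta' > 0$, together with inner approximations $E_{0,k,j} \subset E_{k,j}$ satisfying $\mu(E_{0,k,j}) > \mu(Z_k)$.

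For each $(k,j) \in L_F$ I would then invoke the simplified hypothesis with $F := h^j(Z_k)$, $E := E_{k,j}$, and $E_0 := E_{0,k,j}$, obtaining continuous $\tilde{f}_{k,j,l}$ and integers $d_{k,j,l}$ with $\sum_l \tilde{f}_{k,j,l} = 1$ on $h^j(Z_k)$ and with $\supp(\tilde{f}_{k,j,l} \circ h^{-d_{k,j,l}})$ pairwise disjoint subsets of $E_{k,j}$. A continuous partition of unity $\{\chi_{k,j}\}_{L_F}$ on a closed neighborhood of $C$, subordinate to topologically well-behaved thickenings of the levels $h^j(Z_k^{(0)})$ that cover $C$ (including the universally null residual $N$), then allows setting $f_{k,j,l} := \chi_{k,j} \tilde{f}_{k,j,l}$; one checks $\sum f_{k,j,l} = \sum \chi_{k,j} = 1$ on $C$, and the supports $\supp(f_{k,j,l} \circ h^{-d_{k,j,l}}) \subset E_{k,j} \subset U$ are pairwise disjoint across all indices because the $E_{k,j}$ are. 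After re-indexing this yields the DCP conclusion. The hardest part will be the carving step---simultaneously achieving pairwise disjointness, topologically $h$-small boundaries, measure domination over each $\mu(Z_k)$, and separation from each compact $h^j(Z_k)$---which is where minimality, the measure bound $\mu(C) < \mu(U)$, and the topological small boundary property must cooperate most delicately.
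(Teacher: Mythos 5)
The proposal misses the central mechanism of the paper's proof and in its place imports the heavier Rokhlin-tower machinery that actually belongs in Theorem~\ref{TopSmallDCP}, not in this reduction lemma. The paper's proof is a one-shot reduction: the real obstacle to invoking the simplified hypothesis directly is the requirement $\overline{E_{0}} \cap F = \varnothing$, which fails whenever $C$ and $U$ overlap. The paper resolves this by choosing a ring of open sets $G_{0} \subset G_{1} \subset G_{2}$ inside an inner approximation $U_{0}$ of $U$ with $C \cap \overline{V} \subset G_{0}$, handling the piece $C \cap G_{0}$ trivially with one extra function supported in $G_{2} \subset U$ (with $d(M+1)=0$), and taking $F_{0} = C \setminus G_{0}$, which can then be thickened to a closed $F = \overline{W}$ disjoint from a shrunk target $E$. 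The hypothesis is applied exactly once, to this $F$ and $E$, and the two resulting families are merged by a normalization. Your plan instead applies the hypothesis separately to each Rokhlin level of $F$, which is strategically backwards: the Rokhlin decomposition is the content of the main theorem, and this lemma exists precisely so that the theorem only has to produce the sets $F$, $E$, $E_{0}$.

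Beyond the strategic mismatch, two steps you flag as delicate are in fact genuine gaps. First, the carving of pairwise disjoint $E_{k,j} \subset U$ with $\mu(E_{k,j}) > \mu(Z_{k})$ \emph{for every} $\mu \in M_{h}(X)$ is not merely delicate; it can fail. Since the $E_{k,j}$ must be a single fixed disjoint family while the distribution of mass among the $Z_{k}$ varies with $\mu$, one can in general find a measure concentrating on one $h^{j}(Z_{k})$ more than any preassigned disjoint slot can hold, even though the aggregate bound $\mu(F) < \mu(U)$ holds. The paper avoids this entirely because it never needs per-level measure domination. Second, the partition-of-unity step does not cover $C$. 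You need $\sum \chi_{k,j} = 1$ on $C$ with each $\chi_{k,j}$ supported where $\sum_{l} \tilde{f}_{k,j,l} = 1$, i.e., inside the closed level $h^{j}(Z_{k})$. But $C$ meets the residual set $N$ of tower-level boundaries, where no single level contains a neighborhood; the $\chi_{k,j}$ cannot simultaneously sum to one there and remain subordinate to the levels. (In Theorem~\ref{TopSmallDCP} the paper handles exactly this residual via Lemma~\ref{LeftoverCoverSpace}, producing additional small-measure slots in $U \setminus \overline{U}_{0}$; you have not reproduced that mechanism.) In short: the decomposition-plus-tower idea is the right shape for the main theorem but the wrong one for this lemma, and it leaves two unresolved obstructions that the paper's much shorter argument sidesteps entirely.
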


\begin{proof}
Let $U \subset X$ be open and let $C \subset X$ be closed with $\mu 
(C) < \mu (U)$ for every $\mu \in M_{h}(X)$. Set $\dt = \inf_{\mu \in 
M_{h}(X)} [ \mu (U) - \mu (C) ] > 0$. By Corollary 
\ref{TopSmallRegularity}(\ref{TopSmallInnerRegular}), there is an 
open set $U_{0} \subset U$ with $\overline{U}_{0} \subset U$, $\del 
U_{0}$ topologically $h$-small, and $\mu (U \setminus 
\overline{U}_{0}) < \dt/3$ for all $\mu \in M_{h}(X)$. For any $\mu \in 
M_{h}(X)$, we have 
\[
\mu (U_{0}) - \mu (C) = [\mu (U) - \mu (C)] - [\mu (U) - \mu (U_{0}) ] 
< \dt - \dt/3 = 2 \dt/3 > 0.
\]
First suppose that $C \subset \overline{U}_{0}$. Set $M = 0$ and 
$d(0) = 0$, and choose a continuous function $f_{0} \colon X \to [0,1]$ 
such that $f_{0} = 1$ on $\overline{U}_{0}$ and $\supp (f_{0}) \subset 
U$. Then $\sum_{j=0}^{M} f_{j} = f_{0} = 1$ on $C$, and $\supp (f_{0} 
\circ h^{-d(0)}) = \supp (f_{0}) \subset U$ as required. 

So we may assume that $C \cap (X \setminus \overline{U}_{0}) \neq 
\varnothing$. By Corollary 
\ref{TopSmallRegularity}(\ref{TopSmallInnerRegular}), there is an open 
set $V \subset U_{0}$ such that $\overline{V} \subset U_{0}$, $\del V$ 
is topologically $h$-small, and $\mu (U_{0} \setminus \overline{V}) < 
\dt/3$ for all $\mu \in M_{h}(X)$. For any $\mu \in M_{h}(X)$, we have 
\[
\mu (V) - \mu (C) = [\mu (U_{0}) - \mu (C)] - [\mu (U_{0}) - \mu (V)] > 
2\dt/3 - \dt/3 = \dt/3 > 0.
\]
Now, set $\eps = \inf_{\mu \in M_{h}(X)} [ \mu (V) - \mu (C) ] > 0$. 
Applying Corollary 
\ref{TopSmallRegularity}(\ref{TopSmallCptOuterRegular}) three 
times, we obtain open sets $G_{0},G_{1}, G_{2} \subset X$ such that
\[
C \cap \overline{V} \subset G_{0} \subset \overline{G}_{0}
\subset G_{1} \subset \overline{G}_{1} \subset G_{2} \subset
\overline{G}_{2} \subset U_{0},
\]
with $\del G_{i}$ topologically $h$-small for $i = 0,1,2$ (and so also 
$\del G_{i}$ universally null for $i = 0,1,2$ by Corollary 
\ref{TopSmallMeasZero}), $\mu (G_{0} \setminus C \cap \overline{V}) 
< \eps/4$, $\mu (G_{1} \setminus \overline{G}_{0}) < \eps/4$, and 
$\mu (G_{2} \setminus \overline{G}_{1}) < \eps$ for all $\mu \in 
M_{h}(X)$. Set $F_{0} = C \setminus G_{0}$, $E_{0} = U \setminus 
\overline{G}_{1}$, and $E_{1} = V \setminus \overline{G}_{2}$. Then:
\begin{enumerate}
\item $F_{0}$ is closed and non-empty, since $C \cap (X \setminus 
\overline{U}_{0}) \neq \varnothing$ implies $C \cap (X \setminus 
G_{0}) \neq \varnothing$; 
\item $E_{1}$ and $E_{0}$ are both open and non-empty, and 
by construction we have $E_{1} \subset \overline{E}_{1} \subset 
E_{0}$; 
\item $\overline{E}_{1} \cap F_{0} = \varnothing$; 
\item $\del F_{0}$, $\del E_{0}$, and $\del E_{1}$ are universally 
null, being subsets of the universally null sets $\del C \cup \del 
G_{0}$, $\del U \cup \del G_{1}$, and $\del V \cup \del G_{2}$ 
respectively;
\item Observing $C \cap \overline{V} \subset C \cap G_{0}$, for 
every $\mu \in M_{h}(X)$ we have    
\begin{align*}
\mu (E_{1}) - \mu (F_{0}) &= \mu (V \setminus \overline{G}_{2}) - \mu
(C \setminus G_{0}) \\
&= \mu (V) - \mu (V \cap \overline{G}_{2}) - [ \mu(C) - \mu (C \cap
G_{0}) ] \\
&\geq [ \mu(V) - \mu(C) ] + \mu (C \cap G_{0}) \\
&\hspace{0.52 in} - [\mu (C \cap V) + \mu (G_{2} \setminus 
\overline{G}_{1}) + \mu (G_{1} \setminus \overline{G}_{0}) + \mu 
(G_{0} \setminus (C \cap \overline{V})) ] \\
&\geq \eps - [ \mu (G_{2} \setminus \overline{G}_{1}) + \mu (G_{1}
\setminus \overline{G}_{0}) + \mu (G_{0} \setminus C \cap
\overline{V}) ] \\
&> \eps - 3\eps/4 \\
&= \eps/4 \\
&> 0.
\end{align*}
\end{enumerate}
Now Corollary \ref{TopSmallRegularity}(\ref{TopSmallInnerRegular}) 
gives an open set $E \subset E_{1}$ such that $\overline{E} \subset 
E_{1}$, $\del E$ is topologically $h$-small, and $\mu (E_{1} 
\setminus \overline{E}) < \eps/16$ for all $\mu \in M_{h}(X)$. 
Since $F_{0}$ is disjoint from $\overline{E}$, there is an open 
set $W_{0} \subset X$ such that $F_{0} \subset W_{0}$ and $W_{0} 
\cap \overline{E} = \varnothing$. Corollary 
\ref{TopSmallRegularity}(\ref{TopSmallCptOuterRegular}) then 
implies that there is an open set $W \subset X$ such that $F_{0} 
\subset W \subset \overline{W} \subset W_{0}$, $\del W$ is 
topologically $h$-small, and $\mu (W \setminus F_{0}) < \eps/16$ 
for all $\mu \in M_{h}(X)$. Now set $F = \overline{W}$, 
which satisfies $\sint (F) \neq \varnothing$, $\del F$ topologically 
$h$-small (which in particular gives $\mu (F) = \mu (W)$), and 
$\overline{E} \cap F = \varnothing$. For any $\mu \in M_{h}(X)$, 
we have 
\begin{align*}
\mu (E) - \mu (F) &= \mu (E) - \mu (W) \\ 
&> [\mu (E_{1}) - \eps/16] - [\mu (F_{0}) + \eps/16] \\
&= [\mu (E_{1}) - \mu (F_{0})] - \eps/8 \\ 
&> \eps/8 \\ 
&> 0.
\end{align*}
It follows that the sets $F$ and $E$ satisfy the conditions for the 
property given in the statement of the Lemma. Therefore, there exist 
$M \in \N$, continuous functions $f_{0},\ldots,f_{M} \colon X \to [0,1]$, 
and $d(0),\ldots,d(M) \in \Z$ such that $\sum_{j=0}^{M} f_{j} = 1$ 
on $F$, and such that the sets $\supp (f_{j} \circ h^{-d(j)})$ are 
pairwise disjoint subsets of $E$ for $0 \leq j \leq M$. Choose a 
continuous function $f_{M+1} \colon X \to [0,1]$ such that $f_{M+1} = 
1$ on $\overline{G}_{1}$ and $\supp (f_{M+1}) \subset G_{2}$, and 
set $d(M+1) = 0$. Now for any $x \in C$, either $x \in F_{0}$ or $x 
\in G_{0} \cap C$. If $x \in F_{0}$ then in particular $x \in F$, 
and so $\sum_{j=0}^{M+1} f_{j}(x) \geq \sum_{j=0}^{M} f_{j}(x) = 
1$. If $x \in G_{0} \cap C$ then in particular $x \in 
\overline{G}_{1}$, and so $\sum_{j=0}^{M+1} f_{j}(x) \geq
f_{M+1}(x) = 1$. It follows that $\sum_{j=0}^{M+1} f_{j}(x) \geq 1$ 
for all $x \in C$. From the continuity of the $f_{j}$, there 
is an open set $S \subset X$ such that $C \subset S$ and 
$\sum_{j=0}^{M+1} f_{j}(x) \geq \ts{\frac{1}{2}}$ for all $x \in S$. 
Choose a continuous function $f \colon X \to [0,1]$ such that $f = 
1$ on $C$ and $\supp (f) \subset S$. For $0 \leq j \leq M+1$, define 
a continuous function $g_{j} \colon X \to [0,1]$ by 
\[
g_{j}(x) = \begin{cases} f(x) f_{j}(x) \left( \sum_{i=0}^{M+1} 
f_{i}(x) \right)^{-1} & \textnormal{if} \; x \in S \\ 0 & 
\textnormal{if} \; x \not \in S. \end{cases}
\]
Then for any $x \in C$, we have
\[ 
\sum_{j=0}^{M+1} g_{j}(x) = \left( \sum_{i=0}^{M+1} f_{i}(x) 
\right)^{-1} \sum_{j=0}^{M+1} f(x) f_{j}(x) = \left( \sum_{i=0}^{M+1} 
f_{i}(x) \right)^{-1} \sum_{j=0}^{M+1} f_{j}(x) = 1.
\] 
Moreover, $g_{j}(x) = 0$ for any $x \in X$ where $f_{j}(x) = 0$, 
which implies that $\supp (g_{j}) \subset \supp (f_{j})$. It follows 
that $\supp (g_{j} \circ h^{-d(j)}) \subset \supp (f_{j} \circ 
h^{-d(j)})$ for $0 \leq j \leq M+1$. This immediately gives pairwise 
disjointness of the sets $\supp (g_{j} \circ h^{-d(j)})$ for $0 \leq 
j \leq M$, since the sets $\supp (f_{j} \circ h^{-d(j)})$ are 
pairwise disjoint for $0 \leq j \leq M$. Further, all of these sets 
are contained in $U$ as $E \subset U$. Finally, $\supp (g_{M+1} 
\circ g^{-d(M+1)}) = \supp (g_{M+1}) \subset \supp (f_{M+1}) = \supp 
(f_{M+1} \circ h^{-d(M+1)}) \subset G_{2} \subset U$, and $E \cap 
G_{2} = \varnothing$. Thus, the sets $\supp (g_{j} \circ h^{-d(j)})$ 
are pairwise disjoint subsets of $U$ for $0 \leq j \leq M+1$. It 
follows that $(X,h)$ has the dynamic comparison property.
\end{proof}

\begin{lem}\label{PosiErgodicAvgs}
Let $(X,h)$ be as in Notation \ref{MinDynSys}. Suppose that $F 
\subset X$ is closed and $E \subset X$ is open such that 
\begin{enumerate}
\item $F \cap \overline{E} = \varnothing$; 
\item $\mu (\del F) = 0$ and $\mu (\del E) = 0$ for all $\mu \in 
M_{h}(X)$;
\item $\mu (F) < \mu (E)$ for all $\mu \in M_{h}(X)$. 
\end{enumerate}
Then there exist continuous functions $g_{0},g_{1} 
\colon X \to [0,1]$ such that $g_{0} = 1$ on $F$, $\supp (g_{0}) 
\subset X \setminus \overline{E}$, $\supp (g_{1}) \subset E$, 
and
\[
\inf_{\mu \in M_{h}(X)} \int_{X} g_{1} -  g_{0} \; d\mu > 0.
\]
Moreover, with $g = g_{1} - g_{0}$, there exist $N_{0} \in \N$ and
$\sm > 0$ such that for all $N \geq N_{0}$ and $x \in X$, we have
\[ \frac{1}{N} \sum_{j=0}^{N-1} g(h^{j}(x)) \geq \sm. \]
\end{lem}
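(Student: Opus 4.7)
The plan is to build $g_{0}$ and $g_{1}$ by combining the measure-regularity results of Corollary~\ref{MeasApproxZeroBdy} with Urysohn's lemma, then to deduce the integral inequality by direct computation. For the uniform Birkhoff average bound I will use the standard weak-$*$ compactness argument with empirical measures on the invariant set $M_{h}(X)$.

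By Lemma~\ref{OpenPositiveMeas}, hypothesis (3) gives $\dt := \inf_{\mu \in M_{h}(X)}[\mu(E) - \mu(F)] > 0$. For $g_{0}$: since $F$ and $\overline{E}$ are disjoint closed sets in the compact Hausdorff space $X$, normality produces an open $W$ with $F \subset W \subset \overline{W} \subset X \setminus \overline{E}$. Because $\del F$ is universally null, Corollary~\ref{MeasApproxZeroBdy}(\ref{OpenOutsideClosed}) gives an open $V_{0} \supset F$ with $\mu(V_{0} \setminus F) < \dt/4$ for all $\mu \in M_{h}(X)$; set $V = V_{0} \cap W$ so that $V$ is open, $F \subset V$, $\overline{V} \subset \overline{W} \subset X \setminus \overline{E}$, and $\mu(V \setminus F) < \dt/4$. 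Urysohn's lemma then yields continuous $g_{0} \colon X \to [0,1]$ with $g_{0} = 1$ on $F$ and $g_{0} \equiv 0$ off $V$, so $\supp(g_{0}) \subset \overline{V} \subset X \setminus \overline{E}$. For $g_{1}$: since $\del E$ is universally null, Corollary~\ref{MeasApproxZeroBdy}(\ref{ClosedInsideOpen}) supplies a closed $K \subset E$ with $\sint(K) \neq \varnothing$ and $\mu(E \setminus K) < \dt/4$. Normality then gives an open $W'$ with $K \subset W' \subset \overline{W'} \subset E$, and Urysohn yields continuous $g_{1} \colon X \to [0,1]$ with $g_{1} = 1$ on $K$ and $g_{1} \equiv 0$ off $W'$, so that $\supp(g_{1}) \subset \overline{W'} \subset E$.

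Since $g_{0} \leq \chi_{V}$ and $g_{1} \geq \chi_{K}$ pointwise, for every $\mu \in M_{h}(X)$ we obtain $\int g_{0}\,d\mu \leq \mu(V) < \mu(F) + \dt/4$ and $\int g_{1}\,d\mu \geq \mu(K) > \mu(E) - \dt/4$. Subtracting,
\[
\int_{X} (g_{1} - g_{0}) \, d\mu > [\mu(E) - \mu(F)] - \dt/2 \geq \dt/2 > 0,
\]
establishing the first claim with infimum at least $\dt/2$.

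For the uniform Birkhoff bound set $g = g_{1} - g_{0}$ and $\sm = \dt/4$, and suppose toward a contradiction that no $N_{0}$ works. Then there exist $N_{k} \to \infty$ in $\N$ and $x_{k} \in X$ with $\tfrac{1}{N_{k}} \sum_{j=0}^{N_{k}-1} g(h^{j}(x_{k})) < \sm$. Define the empirical probability measures $\nu_{k} = \tfrac{1}{N_{k}} \sum_{j=0}^{N_{k}-1} \delta_{h^{j}(x_{k})}$ on $X$, so that $\int g\,d\nu_{k} < \sm$. By weak-$*$ compactness of the space of Borel probability measures on $X$, pass to a subsequence with $\nu_{k} \to \mu_{\infty}$. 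For any $f \in C(X)$,
\[
\abs{\int (f \circ h)\,d\nu_{k} - \int f\,d\nu_{k}} = \frac{\abs{f(h^{N_{k}}(x_{k})) - f(x_{k})}}{N_{k}} \leq \frac{2\norm{f}_{\infty}}{N_{k}} \to 0,
\]
so $\mu_{\infty} \in M_{h}(X)$. Continuity of $g$ gives $\int g\,d\mu_{\infty} = \lim_{k} \int g\,d\nu_{k} \leq \sm < \dt/2$, contradicting the previous paragraph. The only non-routine obstacle is the bookkeeping used to secure the strict support conditions $\supp(g_{0}) \subset X \setminus \overline{E}$ and $\supp(g_{1}) \subset E$ (closures, not just the sets themselves) without degrading the $\dt/4$ measure estimates; once that is arranged, the ergodic half reduces to the standard invariant-measure compactness argument above.
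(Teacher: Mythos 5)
Your proof is correct and follows essentially the same route as the paper's: use Corollary~\ref{MeasApproxZeroBdy} to produce the measure-controlled inner/outer approximations, Urysohn to build $g_0,g_1$, compute the integral gap, and run a contradiction argument for the uniform Birkhoff bound. The only noteworthy difference is expository: where the paper simply cites the pointwise ergodic theorem in Walters, you spell out the Krylov--Bogolyubov-style weak-$*$ compactness argument with empirical measures $\nu_k = \tfrac{1}{N_k}\sum_j \delta_{h^j(x_k)}$; this is arguably cleaner, since it produces one invariant limit measure $\mu_\infty$ with $\int g\,d\mu_\infty \leq \sigma$ and derives the contradiction from that single measure, whereas the paper's phrasing (``for every $\mu \in M_h(X)$'') overstates what the limit argument gives, even though the intended contradiction still goes through.
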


\begin{proof}
Since $F \cap \overline{E} = \varnothing$, the normality of $X$ gives
open sets $V_{0},V_{1} \subset X$ such that $F \subset V_{0}$,
$\overline{E} \subset V_{1}$, and $V_{0} \cap V_{1} = \varnothing$. Let
$\eps = \tfrac{1}{3} \inf_{\mu \in M_{h}(X)} [\mu (E) - \mu (F)] > 0$. 
Corollary \ref{MeasApproxZeroBdy} implies there exist and open set 
$W \subset X$ and a compact set $K \subset X$ such that $F \subset 
W$, $K \subset E$, $\mu (U \setminus F) < \eps$ for all $\mu \in 
M_{h}(X)$, and $\mu (E \setminus K) < \eps$ for all $\mu \in M_{h}(X)$. 
Set $W_{0} = V_{0} \cap W$, which satisfies $F \subset W_{0}$, 
$W_{0} \cap \overline{E} \subset W_{0} \cap V_{1} = \varnothing$, and 
$0 < \mu (W_{0} \setminus F) \leq \mu (W \setminus F) < \eps$ for all 
$\mu \in M_{h}(X)$. Now choose continuous functions  $g_{0}$ and 
$g_{1}$ such that $g_{0} = 1$ on $F$, $\supp (g_{0}) \subset W_{0}$, 
$g_{1}  = 1$ on $K$, and $\supp (g_{1}) \subset E$. Observing that
\begin{align*}
\mu (K) - \mu (W_{0}) &= ( \mu (E) - \mu (F) ) - ( \mu (E) - \mu
(K) ) - ( \mu (W_{0}) - \mu (F) ) \\
&= (\mu (E) - \mu (F)) - (\mu (E \setminus K)) - (\mu(W_{0} 
\setminus F)) \\
&> 3 \eps - \eps - \eps \\ 
&= \eps
\end{align*}
for all $\mu \in M_{h}(X)$, it follows that $\inf_{\mu \in M_{h}(X)} 
[\mu (K) - \mu (W)] \geq \eps > 0$. Since $g_{0} \leq \chi_{W_{0}}$ 
and $\chi_{K} \leq g_{1}$, we obtain
\[
\inf_{\mu \in M_{h}(X)} \int_{X} g_{1} - g_{0} \; d\mu \geq \inf_{\mu 
\in M_{h}(X)} \int_{X} \chi_{K} - \chi_{W_{0}} \; d\mu = \inf_{\mu \in 
M_{h}(X)} [ \mu(W_{0}) - \mu (K) ] \geq \eps > 0.
\]
Noting that, by the previous calculation, the function $g = g_{1} - 
g_{0}$ satisfies
\[ 
\inf_{\mu \in M_{h}(X)} \int_{X} g \; d\mu > 0, 
\]
we define $\sm > 0$ by
\[
\sm = \frac{1}{2} \inf_{\mu \in M_{h}(X)} \int_{X} g \; d\mu.
\]
Suppose that no $N_{0} \in \N$ as in the 
statement of the Lemma exists. Then there exist sequences 
$(N_{k})_{k=1}^{\infty} \subset \N$ and $(x_{k})_{k=1}^{\infty} 
\subset X$ such that for all $k \in \N$ we have
\[ 
\frac{1}{N_{k}} \sum_{j=0}^{N_{k}-1} g(h^{j}(x_{k})) \leq \sm. 
\]
Passing to subsequences $(N_{k(l)})_{l=1}^{\infty}$ and
$(x_{k(l)})_{l=1}^{\infty}$ (if necessary) and applying the
pointwise ergodic theorem (see the remark after Theorem 
1.14 of \cite{Wal}) yields
\[
\int_{X} g \; d\mu = \lim_{l \to \infty} \frac{1}{N_{k(l)}}
\sum_{j=0}^{N_{k(l)}-1} g(h^{j}(x_{k(l)})) \leq \sm
\]
for every $\mu \in M_{h}(X)$. We conclude that 
\[
\sup_{\mu \in M_{h}(X)} \int_{X} g \; d\mu \leq \sm = \frac{1}{2} 
\inf_{\mu \in M_{h}(X)} \int_{X} g \; d\mu,
\]

a contradiction.
\end{proof}

\begin{lem}\label{TopSmallRokhlinTower}
Suppose that $(X,h)$ has the topological small boundary property. 
Then for any $N \in \N$, there exists a closed set $Y \subset X$ 
such that $\sint (Y) \neq \varnothing$, $\overline{\sint (Y)} = Y$, 
$\del Y$ is topologically $h$-small, and the sets 
$Y,h(Y),\ldots,h^{N}(Y)$ are pairwise disjoint.
\end{lem}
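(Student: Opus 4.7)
The plan is to start with any point $x_{0} \in X$ and build an open neighborhood $U$ of $x_{0}$ whose first $N+1$ iterates $U, h(U), \ldots, h^{N}(U)$ are pairwise disjoint, and then shrink $U$ to a closed set with topologically $h$-small boundary using the topological small boundary property.

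First I would observe that the minimality of $h$ together with the infinitude of $X$ forces the action to be free, exactly as used in the proof of Lemma \ref{ThinHasMeasZero}: if $h^{n}(x) = x$ for some $n \geq 1$, the orbit of $x$ is finite and hence equal to $X$ by minimality, contradicting infinitude. Fix any $x_{0} \in X$. The $N+1$ points $x_{0}, h(x_{0}), \ldots, h^{N}(x_{0})$ are then distinct, so by the Hausdorff property I can pick pairwise disjoint open neighborhoods $U_{0}, U_{1}, \ldots, U_{N}$ of these points respectively. Setting $U = \bigcap_{j=0}^{N} h^{-j}(U_{j})$ yields an open neighborhood of $x_{0}$ satisfying $h^{j}(U) \subset U_{j}$ for $0 \leq j \leq N$, so $U, h(U), \ldots, h^{N}(U)$ are pairwise disjoint.

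Next, applying Lemma \ref{TSBPEquiv}(\ref{TopSmallPoint}) to $x_{0}$ and $U$, I obtain an open neighborhood $V$ of $x_{0}$ with $V \subset \overline{V} \subset U$ and $\del V$ topologically $h$-small. Define $Y = \overline{V}$. Then $\sint(Y) \supset V$ is non-empty; since $V \subset \sint(Y) \subset Y = \overline{V}$, taking closures gives $\overline{\sint(Y)} = \overline{V} = Y$; and $Y \subset U$ gives $h^{j}(Y) \subset h^{j}(U)$ for $0 \leq j \leq N$, so the sets $Y, h(Y), \ldots, h^{N}(Y)$ are pairwise disjoint.

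The remaining check is that $\del Y$ is topologically $h$-small. The inclusion $V \subset \sint(\overline{V})$ (which holds since $V$ is open and contained in $\overline{V}$) gives $\del Y = \overline{V} \setminus \sint(\overline{V}) \subset \overline{V} \setminus V = \del V$. Since $\del Y$ is closed and $\del V$ is topologically $h$-small by construction, Lemma \ref{TopSmallProps}(\ref{TopSmallBasic}) implies $\del Y$ is topologically $h$-small. There is no serious obstacle in this argument; the only step that requires a moment's thought is verifying the regularity condition $\overline{\sint(Y)} = Y$, which is automatic once $Y$ is taken to be the closure of a non-empty open set.
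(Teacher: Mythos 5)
Your proof is correct and follows essentially the same route as the paper: use freeness to get $N+1$ distinct iterates, intersect pulled-back disjoint neighborhoods to get an open $U$ with disjoint iterates, then shrink to a set with topologically $h$-small boundary using the topological small boundary property and take the closure. The only cosmetic difference is that you invoke the point-version Lemma~\ref{TSBPEquiv}(\ref{TopSmallPoint}) while the paper applies Definition~\ref{TSBP} directly to the compact pair $\{y\}$ and $X\setminus W$; your careful verification that $\del Y\subset\del V$ (rather than asserting $\sint(Y)=U$) is, if anything, slightly cleaner.
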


\begin{proof}
Since the action of $h$ on $X$ is free, for $y \in X$ the iterates 
$y,h(y),\ldots,h^{N}(y)$ are all distinct elements of $X$. Choose 
pairwise disjoint open neighborhoods $W_{0},\ldots,W_{N}$ of 
these points, and set $W = \bigcap_{j=0}^{N} h^{-j}(W_{j})$. Then 
the iterates $W,h(W),\ldots,h^{N}(W)$ are pairwise disjoint. Let $F = 
\set{y}$ and $K = X \setminus W$. Apply the topological small 
boundary property to obtain open sets $U, V \subset X$ such that 
$F \subset U$, $K \subset V$, $\overline{U} \cap \overline{V} = 
\varnothing$, and $\del U$ is topologically $h$-small. Setting $Y = 
\overline{U}$, it follows that $\sint (Y) = U \neq \varnothing$, 
$\overline{\sint (Y)} = Y$, and $\del Y$ is topologically $h$-small. 
Finally, as $X \setminus W \subset V$ and $Y \cap \overline{V} = 
\varnothing$, it follows that $Y \subset W$, and hence the sets 
$Y,h(Y),\ldots,h^{N}(Y)$ are pairwise disjoint.
\end{proof}

\begin{lem}\label{ThinTowerBoundaries}
Let $(X,h)$ be as in Notation \ref{MinDynSys}. Let $Y \subset X$ be 
closed with $\sint (Y) \neq \varnothing$ and $\del Y$ topologically 
$h$-small. Adopt the notation of Theorem \ref{RokhlinTower}. Then 
$\del (h^{j}(Y_{k}))$ is thin for $0 \leq k \leq l$ and $0 \leq j 
\leq n(k) - 1$.
\end{lem}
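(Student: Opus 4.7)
The plan is to reduce the claim to showing that each $\del Y_k$ is topologically $h$-small, and then invoke Proposition \ref{TopSmallIsThin} together with the fact (noted after Definition \ref{ThinSet}) that thin sets are closed under translation by powers of $h$. Since $h$ is a homeomorphism, $\del(h^j(Y_k)) = h^j(\del Y_k)$, so it suffices to prove that $\del Y_k$ is topologically $h$-small for each $0 \leq k \leq l$.

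The key step is to establish the containment
\[
\del Y_k \subset \bigcup_{j=0}^{n(k)} h^{-j}(\del Y).
\]
To prove this, I would fix $y \in Y_k$ with $h^j(y) \notin \del Y$ for every $0 \leq j \leq n(k)$ and show $y \in \sint(Y_k)$. Writing $A_k = \set{y \in Y \colon r(y) = n(k)}$, choose a sequence $y_n \in A_k$ with $y_n \to y$. Then $y_n \in Y$, $h^{n(k)}(y_n) \in Y$, and $h^m(y_n) \in X \setminus Y$ for $1 \leq m \leq n(k) - 1$. Taking limits, and using that $Y$ is closed, gives $y \in Y$, $h^{n(k)}(y) \in Y$, and $h^m(y) \in X \setminus \sint(Y)$ for $1 \leq m \leq n(k) - 1$. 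Combined with the hypothesis that no $h^j(y)$ lies in $\del Y$, these upgrade to the open conditions $y \in \sint(Y)$, $h^{n(k)}(y) \in \sint(Y)$, and $h^m(y) \in X \setminus Y$ for $1 \leq m \leq n(k) - 1$. A neighborhood $U$ of $y$ on which all of these open conditions persist satisfies $U \subset A_k \subset Y_k$, so $y \in \sint(Y_k)$, as required.

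Granted the containment, by Lemma \ref{TopSmallProps}(\ref{TopSmallBasic}) each $h^{-j}(\del Y)$ is topologically $h$-small, and Lemma \ref{TopSmallProps}(\ref{TopSmallUnion}) shows the finite union $\bigcup_{j=0}^{n(k)} h^{-j}(\del Y)$ is topologically $h$-small. Applying Lemma \ref{TopSmallProps}(\ref{TopSmallBasic}) once more to the closed subset $\del Y_k$ of this union, we conclude $\del Y_k$ is topologically $h$-small. Proposition \ref{TopSmallIsThin} then gives that $\del Y_k$ is thin, and the translation-invariance of thinness under $h$ yields that $\del(h^j(Y_k)) = h^j(\del Y_k)$ is thin for every $0 \leq j \leq n(k) - 1$.

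The main obstacle is the boundary containment, which hinges on a careful extraction from the limit argument: limits of points with first-return time exactly $n(k)$ could a priori have smaller return time (because iterates might hit $\del Y$), and the hypothesis $h^j(y) \notin \del Y$ is precisely what is needed to rule this out and convert the limiting inclusions into stable open conditions.
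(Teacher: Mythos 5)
Your proof is correct and takes essentially the same approach as the paper: reduce to $\del Y_k$ by translation-invariance of thinness, cover $\del Y_k$ by finitely many translates of $\del Y$, and conclude by the closure of topological $h$-smallness (equivalently, thinness) under finite unions. You in fact supply a careful limiting argument for the containment $\del Y_k \subset \bigcup_{j=0}^{n(k)} h^{-j}(\del Y)$, which the paper asserts without justification; routing the union step through topological $h$-smallness and Proposition \ref{TopSmallIsThin} instead of directly through Lemma \ref{ThinUnionThin} is an inessential variation.
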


\begin{proof}
By Proposition \ref{TopSmallIsThin}, $\del Y$ is thin. For $0 \leq j 
\leq n(k) - 1$, we have $\del h^{j}(Y_{k}) = h^{j}(\del Y_{k})$, and 
since translates of thin sets are thin, it suffices to prove that 
each of the sets $\del Y_{k}$ is thin. But $\del Y_{k} \subset 
\bigcup_{j=0}^{n(l)-1} h^{j}(\del Y)$, and this set is thin by Lemma 
\ref{ThinUnionThin}, since it is a finite union of translates of 
thin sets.
\end{proof}

\begin{thm}\label{TopSmallDCP}
Let $(X,h)$ be as in Notation \ref{MinDynSys}, and suppose that 
$(X,h)$ has the topological small boundary property. Then $(X,h)$ 
has the dynamic comparison property.
\end{thm}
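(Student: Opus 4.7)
The plan is to verify the hypothesis of Lemma \ref{DCPSimplification}, which immediately yields the dynamic comparison property. So fix a closed $F \subset X$ with $\sint F \neq \varnothing$ and $\del F$ topologically $h$-small, an open $E$, and an open $E_0 \subset E$ with $\overline{E_0} \subset E$, $\overline{E_0} \cap F = \varnothing$, $\del E_0$ topologically $h$-small, and $\mu(F) < \mu(E_0)$ for all $\mu \in M_h(X)$. First I would apply Lemma \ref{PosiErgodicAvgs} to $F$ and $E_0$ (noting that $\del F$ and $\del E_0$ are thin by Proposition \ref{TopSmallIsThin} and hence universally null by Lemma \ref{ThinHasMeasZero}) to obtain continuous $g_0, g_1 \colon X \to [0,1]$ with $g_0 = 1$ on $F$, $\supp g_0 \cap \overline{E_0} = \varnothing$, $\supp g_1 \subset E_0$, together with $N_0 \in \N$ and $\sigma > 0$ such that $\frac{1}{N}\sum_{j=0}^{N-1}(g_1 - g_0)(h^j(x)) \geq \sigma$ for all $x \in X$ and $N \geq N_0$. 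Since $g_1 \leq \chi_{E_0}$ and $g_0 \geq \chi_F$ pointwise, this supplies the arithmetic needed to pair $F$-visits against $E_0$-visits along any orbit segment of length at least $N_0$.

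Next, the plan is to build a tall Rokhlin tower based in $E_0$. Choose $y_0 \in \sint E_0$; by freeness its first $N_0 + 1$ iterates are distinct, so an argument mirroring Lemma \ref{TopSmallRokhlinTower} (starting from a neighborhood of $y_0$ inside $E_0$) produces a closed $Y \subset E_0$ with $\sint Y \neq \varnothing$, $\del Y$ topologically $h$-small, and $Y, hY, \ldots, h^{N_0}Y$ pairwise disjoint. Then I would invoke the refined form of Theorem \ref{RokhlinTower} with the partition $\mathcal{P} = \{F, \overline{E_0}, X \setminus (F \cup \overline{E_0})\}$ (dropping the third element if it has empty interior) to obtain closed $Z_0, \ldots, Z_m \subset Y$, heights $t(k) \geq N_0$, and levels $h^j(Z_k^{(0)})$ each entirely contained in one partition element. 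Because no non-empty open set fits inside the thin set $\del E_0$, a level contained in $\overline{E_0}$ must actually lie in $E_0$. Setting $A_k = \{j : h^j(Z_k^{(0)}) \subset F\}$ and $B_k = \{j : h^j(Z_k^{(0)}) \subset E_0\}$, picking any $y \in Z_k^{(0)}$ and using $g_1 - g_0 \leq \chi_{E_0} - \chi_F$ pointwise, the ergodic estimate gives $|B_k| - |A_k| \geq \sum_{j=0}^{t(k)-1}(g_1 - g_0)(h^j(y)) \geq t(k)\sigma$. I would therefore fix an injection $\psi_k \colon A_k \hookrightarrow B_k$ and set $d(k,j) = \psi_k(j) - j$.

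For each $(k, j)$ with $j \in A_k$, Corollary \ref{TopSmallRegularity}(\ref{TopSmallInnerRegular}) furnishes an open $V_{k,j}$ with $\overline{V_{k,j}} \subset h^j(Z_k^{(0)})$ and $\del V_{k,j}$ topologically $h$-small, and a bump-function construction produces a continuous $\xi_{k,j}$ equal to $1$ on $\overline{V_{k,j}}$ and supported in $h^j(Z_k^{(0)})$. Pairwise disjointness of the open levels $h^{\psi_k(j)}(Z_k^{(0)})$, combined with the injectivity of each $\psi_k$, guarantees that the translated supports $\supp(\xi_{k,j} \circ h^{-d(k,j)}) \subset h^{\psi_k(j)}(Z_k^{(0)}) \subset E_0 \subset E$ are pairwise disjoint. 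The portion of $F$ not yet covered lies in the thin closed set $F \cap T$ with $T = \bigcup_{k,j} \del h^j(Z_k)$ thin by Lemma \ref{ThinTowerBoundaries}, together with the open inner gaps $\bigcup_{k, j \in A_k} (h^j(Z_k^{(0)}) \setminus \overline{V_{k,j}})$. The thin portion is handled by Lemma \ref{LeftoverCoverSpace} applied with an auxiliary open $E' \subset E$ disjoint from all previously used E-levels; the inner gaps are to be absorbed by exploiting the surplus $B_k \setminus \psi_k(A_k)$ (of size at least $t(k)\sigma$) in conjunction with subordinate partitions of unity aligned along each F-level. Combining everything and renormalizing via the $f/(\sum f_i)$ trick from the proof of Lemma \ref{DCPSimplification} then yields the required partition of unity.

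The main obstacle will be precisely the last step: because the inner gaps $h^j(Z_k^{(0)}) \setminus \overline{V_{k,j}}$ are non-empty open sets they cannot be thin, so Lemma \ref{LeftoverCoverSpace} is not directly applicable to them. The ergodic surplus provided by Lemma \ref{PosiErgodicAvgs} delivers the extra E-levels needed to accommodate these gaps, but orchestrating the subordinate partitions of unity on each F-level, the injection $\psi_k$, and the various inner approximations in such a way that all translated supports remain pairwise disjoint subsets of $E$ is the substantial bookkeeping that sits at the technical heart of the proof.
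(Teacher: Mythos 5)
Your proposal correctly identifies the overall framework: reduce via Lemma \ref{DCPSimplification}, apply Lemma \ref{PosiErgodicAvgs} for the ergodic averages, build a tall Rokhlin tower compatible with a partition separating the closed set from the target open set, use the ergodic estimate to inject $F$-levels into $E_0$-levels, and dispose of the thin tower-boundary set via Lemma \ref{LeftoverCoverSpace}. But the decisive technical step is not what you describe, and you have honestly flagged the obstacle yourself.

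The difficulty is exactly where you put it. Your bump functions $\xi_{k,j}$, equal to $1$ only on a strictly smaller closed set $\overline{V}_{k,j} \subsetneq h^j(Z_k^{(0)})$, leave open inner gaps $h^j(Z_k^{(0)}) \setminus \overline{V}_{k,j}$ that are non-empty open sets, hence never thin, so Lemma \ref{LeftoverCoverSpace} cannot touch them. The proposed remedy of routing the gaps to the surplus $E_0$-levels does not obviously close either: the surplus $|B_k| - |A_k| \geq \sigma\, t(k)$ need not dominate $|A_k|$ (it does so only when $\sigma$ is bounded below by a fixed constant depending on the ratio of visits, which you do not control), so there is no guarantee of one spare $E_0$-level per $F$-level, and translating several gap-covers into the same $E_0$-level risks overlapping translated supports.

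The paper avoids creating these inner gaps in the first place by reversing the order of the two constructions. After showing the tower-boundary set is thin, it applies Lemma \ref{LeftoverCoverSpace} \emph{first}, with target $U \setminus \overline{U}_0$, obtaining nested opens $T_i \subset \overline{T}_i \subset V_i \subset \overline{V}_i \subset W_i \subset U \setminus \overline{U}_0$, translation integers $r(i)$, and functions $(b_i)$ with $\sum_i b_i = 1$ on $\bigcup_i h^{r(i)}(\overline{V}_i)$ and $\supp (b_i \circ h^{-r(i)}) \subset W_i$, the $W_i$ pairwise disjoint. Only then does it choose the level bump function $f_{m,k}$, requiring it to equal $1$ on the \emph{entire} level minus the already-covered region, namely on $h^{s_k(m)}(Y_k) \setminus \bigcup_i h^{r(i)}(V_i)$, with support inside $h^{s_k(m)}(Y_k) \setminus \bigcup_i h^{r(i)}(\overline{T}_i)$. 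Thus the part of the level where $f_{m,k}$ falls below $1$ lies in $\bigcup_i h^{r(i)}(V_i)$, which is precisely where $\sum_i b_i = 1$; consequently $\sum_{m,k} f_{m,k} + \sum_i b_i \geq 1$ on $C$, and there is no leftover inner gap to absorb. A final normalization $f_j = f\, f_j^{(0)} / \sum_i f_i^{(0)}$ (the same trick as in Lemma \ref{DCPSimplification}) produces the required partition of unity, and disjointness of the translated supports follows because the $W_i$ live in $U \setminus \overline{U}_0$ while the translated $F$-levels land in pairwise disjoint $E_0$-levels. Note also that requiring the tower base $Y$ to lie inside $E_0$ is unnecessary; the paper takes an arbitrary base via Lemma \ref{TopSmallRokhlinTower}. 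If you reorganize so that the level bump functions shrink only on the region already covered by the leftover functions, your argument will close.
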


\begin{proof}
Let $C \subset X$ be closed and $U \subset X$ be open such that
$\mu (C) < \mu (U)$ for all $\mu \in M_{h}(X)$. By Lemma 
\ref{DCPSimplification}, we may assume that $\sint (C) \neq 
\varnothing$, $\del C$ is topologically $h$-small, and that there is 
an open set $U_{0} \subset U$ such that $\overline{U}_{0} \subset 
U$, $\del U_{0}$ is topologically $h$-small, $\overline{U}_{0} \cap 
C = \varnothing$, and $\mu (C) < \mu (U_{0})$ for all $\mu \in 
M_{h}(X)$. Applying Lemma \ref{PosiErgodicAvgs} to $C$ and 
$U_{0}$, there exist continuous functions $g_{0},g_{1} \colon X \to 
[0,1]$ such that $g_{0} = 1$ on $C$, $\supp (g_{0}) \subset X 
\setminus \overline{U}_{0}$, $\supp (g_{1}) \subset U_{0}$, and
\[
\inf_{\mu \in M_{h}(X)} \int_{X} g_{1} - g_{0} \; d\mu > 0.
\]
Moreover, with $g = g_{1} - g_{0}$, there exists $N_{0} \in \N$ and
$\sm > 0$ such that for all $N \geq N_{0}$ and $x \in X$, we have
\[
\frac{1}{N} \sum_{j=0}^{N-1} g(h^{j}(x)) \geq \sm.
\]

By Lemma \ref{TopSmallRokhlinTower}, there exists a closed set 
$Y \subset X$ with $\sint (Y) \neq \varnothing$ such that $\del Y$ is 
topologically $h$-small, and such that the sets $Y,h(Y),\ldots,
h^{N_{0}}(Y)$ are pairwise disjoint. Following the
notation of Theorem \ref{RokhlinTower}, we construct the Rokhlin
tower over $Y$ by first return times to $Y$, then apply the second
statement of Theorem \ref{RokhlinTower} with the partition
$\mathcal{P} = \set{U_{0},C,X \setminus (U_{0} \cup C)}$ of $X$ by
sets with non-empty interior (discarding the third set if it is
empty). For convenience, we will use $Y_{0},\ldots,Y_{l}$ and
$n(0) \leq n(1) \leq \cdots \leq n(l)$ for the base spaces and first
return times in the tower compatible with $\mathcal{P}$, and set 
$Y_{k}^{(0)} = Y_{k} \setminus \del Y_{k}$. (Note that since these 
$Y_{k}$ are the sets $Z_{k}$ in Theorem \ref{RokhlinTower}, it may 
be the case that $Y_{k}^{(0)} = \varnothing$.) We set
\[
F = X \setminus \left( \bigcup_{k=0}^{l} \bigcup_{j=0}^{n(k)-1}
h^{j} (Y_{k}^{(0)}) \right).
\]
For each $k \in \set{0,\ldots,l}$, the column $\set{ h^{j}(Y_{k}) 
\colon 0 \leq j \leq n(k) - 1}$ has height at least $N_{0}$. Thus, 
for any $x \in Y_{k}$ we have
\[
\frac{1}{n(k)} \sum_{j=0}^{n(k)-1} g(h^{j}(x)) \geq \sm > 0.
\]
For $S \subset X$ and $k \in \set{0,\ldots,l}$ define
\[
N(S,k) = \set{n \in \set{0,1,\ldots,n(k)-1} \colon h^{n} (Y_{k}) 
\subset S}.
\]
Letting $\chi = \chi_{U_{0}} - \chi_{C}$, we observe that $g_{0} 
= 1$ on $C$ implies that $\chi_{C} \leq g_{0}$ and $\supp (g_{1}) 
\subset U_{0}$ implies that $g_{1} \leq \chi_{U_{0}}$. Combining 
these inequalities gives $g \leq \chi$, and so
\[
0 < \sm \leq \frac{1}{n(k)} \sum_{j=0}^{n(k)} g(h^{j}(x))
\leq \frac{1}{n(k)} \sum_{j=0}^{n(k)} \chi (h^{j}(x)) =
\frac{ \card (N(U_{0},k)) - \card (N(C,k))}{n(k)}.
\]
It follows that for $0 \leq k \leq l$, we have $\card (N(U_{0},k)) >
\card (N(C,k))$ (that is, more levels in the column $\set{ h^{j}
(Y_{k}) \colon 0 \leq j \leq n(k) - 1}$ are contained in $U_{0}$ 
than are contained in $C$) and so there is an injective map $\ph_{k} 
\colon N(C,k) \to N(U_{0},k)$. If we order $N(C,k)$ as $\set{s_{k}(0)
,\ldots,s_{k}(L_{k})}$ and order $N(U_{0},k)$ as 
$\set{t_{k}(0),\ldots,t_{k}(L_{k}),\ldots}$, then one way to 
represent the injection $\ph_{k}$ is by $\ph_{k} = (d_{k}(0),\ldots,
d_{k}(L_{k})) \in \Z^{L_{k}}$ where, for $0 \leq m \leq L_{k}$, the 
integer $d_{k}(m)$ satisfies
\[
h^{d_{k}(m)} ( h^{s_{k}(m)} (Y_{k}) ) \subset h^{t_{k}(m)} (Y_{k}).
\]

Next, we claim that the closed set $F$ is thin. Since the finite 
union of thin sets is thin by Lemma \ref{ThinUnionThin}, it clearly 
suffices to prove that $\del h^{j}(Y_{k})$ is thin for each $0 \leq 
k \leq l$, $0 \leq j \leq n(k) - 1$. Now, $\del C$ and $\del U_{0}$ 
are both topologically $h$-small, hence thin. Since $\del (X 
\setminus (U_{0} \cup C) ) = \del (U_{0} \cup C) \subset \del U_{0} 
\cup \del C$, it follows that the boundaries of all sets in the 
partition $\mathcal{P}$ are thin. As the only processes used in the 
construction of the Rokhlin tower compatible with this partition are 
translation by powers of $h$, finite unions, and finite 
intersections, it follows that it is sufficient to prove that the 
boundaries $\del h^{j}(Y_{k})$ in a standard Rokhlin tower (without 
any condition about compatibility with respect to a partition) are 
thin. This is true by Lemma \ref{ThinTowerBoundaries}, and 
consequently $F$ is thin. 

Now, set $Q = \set{k \colon 0 \leq k \leq l, Y_{k}^{(0)} \neq 
\varnothing}$, $Q' = \set{0,\ldots,l} \setminus Q$, and define 
\[
\eps = \ts{\frac{1}{2}} \min_{k \in Q} \inf_{\mu \in M_{h}(X)} \mu 
(Y_{k}^{(0)}) > 0.
\]
Apply Lemma \ref{LeftoverCoverSpace} 
with $F$, $U \setminus \overline{U}_{0}$, and $\eps$. We obtain $M 
\in \N$, and for $0 \leq i \leq M$ open sets $T_{i},V_{i},W_{i} 
\subset X$, closed sets $F_{i} \subset X$, continuous functions 
$b_{i} \colon X \to [0,1]$, and integers $r(i)$ such that:
\begin{enumerate}
\item $h^{-r(i)}(F_{i}) \subset T_{i} \subset \overline{T}_{i}
\subset V_{i} \subset \overline{V}_{i} \subset W_{i} \subset U
\setminus \overline{U}_{0}$ for $0 \leq i \leq M$;
\item $\sum_{i=0}^{M} b_{i} = 1$ on $\bigcup_{i=0}^{M} h^{r(i)}
( \overline{V}_{i} )$;
\item $\supp (b_{i} \circ h^{-r(i)}) \subset W_{i}$ for $0 \leq i
\leq M$;
\item the sets $W_{i}$ are pairwise disjoint and $\sum_{i=0}^{M}
\mu (W_{i}) < \eps$ for all $\mu \in M_{h}(X)$.
\end{enumerate}
By the choice of $\eps$, it follows that for $k \in Q$ and $0 \leq 
j \leq n(k) - 1$, we have 
\begin{align*}
\mu \left( h^{j}(Y_{k}^{(0)}) \setminus \bigcup_{i=0}^{M} h^{r(i)}
(W_{i}) \right) &\geq \mu ( h^{j} (Y_{k}^{(0)}) ) - \mu \left(
\bigcup_{i=0}^{M} h^{r(i)} (W_{i}) \right) \\
&\geq 2 \eps - \sum_{i=0}^{M} \mu (h^{r(i)} (W_{i})) \\
&= 2 \eps - \sum_{i=0}^{M} \mu (W_{i}) \\ 
&> \eps
\end{align*}
for every $\mu \in M_{h}(X)$, and therefore 
\[
\inf_{\mu \in M_{h}(X)} \mu \left( h^{j}(Y_{k}^{(0)}) \setminus 
\bigcup_{i=0}^{M} h^{r(i)} (W_{i}) \right) \geq \eps > 0.
\]
This implies the sets $h^{j}(Y_{k}^{(0)}) \setminus \bigcup_{i=0}^{M}
h^{r(i)} (W_{i})$ are non-empty whenever $k \in Q$. It follows that 
for $k \in Q$, each set $h^{j}(Y_{k}) \setminus \bigcup_{i=0}^{M} 
h^{r(i)} (V_{i})$ is a non-empty closed subset of $h^{j} (Y_{k})$. 
Now for $k \in Q$ and $0 \leq m \leq L_{k}$ choose a continuous 
function $f_{m,k} \colon X \to [0,1]$ such that $f_{m,k} = 1$ on 
$h^{s_{k}(m)} (Y_{k}) \setminus \bigcup_{i=0}^{M} h^{r(i)} 
(V_{i})$ and $\supp (f_{m,k}) \subset h^{s_{k}(m)} (Y_{k}) \setminus 
\bigcup_{i=0}^{M} h^{r(i)} (\overline{T}_{i})$. Now we have 
collections of continuous functions
\[
\set{b_{i} \colon 0 \leq i \leq M} \cup \set{ f_{m,k} \colon k \in 
Q, 0 \leq m \leq L_{k} }
\]
and associated integers
\[
\set{ r(i) \colon 0 \leq i \leq M} \cup \set{ d_{k}(m) \colon k \in 
Q, 0 \leq m \leq L_{k} }.
\]
For any $x \in C$, if $x \in \bigcup_{k \in Q} \bigcup_{m=0}^{L_{k}}
\left( h^{s_{k}(m)} (Y_{k}) \setminus \bigcup_{i=0}^{M} h^{r(i)}
(V_{i}) \right)$, then $f_{m,k} (x) \neq 0$ for some $k \in Q$ and 
some $m \in \set{0,\ldots,L_{i,k}}$. Otherwise, $x \in
\bigcup_{i=0}^{M} h^{r(i)}(V_{i})$, and $b_{i} (x) \neq 0$ for some
$0 \leq i \leq M$. (Notice that if $x \in \bigcup_{k \in Q'} 
\bigcup_{m=0}^{L_{k}} h^{s_{k}(m)} (Y_{k})$, then in 
fact $x \in F$, and so also $x \in \bigcup_{i=0}^{M} h^{r(i)} 
(V_{i})$.) Now re-order the two collections above as $\set{
f_{j}^{(0)} \colon 0 \leq j \leq K}$ and $\set{d(j) \colon 0 \leq j
\leq K}$ for an appropriate $K \in \N$. Then $\sum_{j=0}^{K}
f_{j}^{(0)} (x) > 0$ for all $x \in C$. Since $C$ is compact and 
the $f_{j}^{(0)}$ are continuous, there must be a $\om > 0$ such 
that $\sum_{j=0}^{K} f_{j}^{(0)}(x) \geq \om$ for all $x \in C$. 
Again using continuity, we can choose an open set $S \subset X$ such 
that $C \subset S$ and $\sum_{j=0}^{K} f_{j}^{(0)} (x) \geq 
\ts{\frac{1}{2}} \om$ for all $x \in S$. Choose a continuous function 
$f \colon X \to [0,1]$ such that $f(x) = 1$ for all $x \in C$, and 
$\supp (f) \subset S$. For $0 \leq j \leq K$ define continuous 
functions $f_{j} \colon X \to [0,1]$ by 
\[
f_{j} (x) = \begin{cases} f(x) f_{j}^{(0)}(x) \left( \sum_{i=0}^{K} 
f_{i}^{(0)} (x) \right)^{-1} & \textnormal{if} \; x \in S \\ 0 & 
\textnormal{if} \; x \not \in S. \end{cases}
\]
Then for any $x \in C$, 
\[
\sum_{j=0}^{K} f_{j}(x) = \left( \sum_{i=0}^{K} f_{i}^{(0)} (x) 
\right)^{-1} \sum_{j=0}^{K} f(x) f_{j}^{(0)}(x) = \left( 
\sum_{i=0}^{K} f_{i}^{(0)} (x) \right)^{-1} \sum_{j=0}^{K} 
f_{j}^{(0)}(x) = 1.
\]
Moreover, $\supp (f_{j}) \subset \supp (f_{j}^{(0)})$ for $0 \leq j 
\leq K$. If $f_{j}^{(0)} = b_{i}$ for some $0 \leq i \leq M$, then
\[
\supp (f_{j}^{(0)} \circ h^{-d(j)}) = \supp (b_{i} \circ h^{-r(i)})
\subset W_{i} \subset U \setminus \overline{U}_{0}
\]
and the sets $W_{i}$ are pairwise disjoint. Therefore the sets $\supp
(f_{j}^{(0)} \circ h^{-d(j)})$ are pairwise disjoint for all choices
of $j$ where $f_{j}^{(0)} \in \set{b_{i} \colon 0 \leq i \leq M}$.
Next, if $f_{j}^{(0)} = f_{m,k}$ for some $k \in Q$ and some $0 \leq 
m \leq L_{k}$, then
\[
\supp (f_{j}^{(0)} \circ h^{-d(j)}) = \supp (f_{m,k} \circ
h^{-d_{k}(m)}) \subset h^{t_{k}(m)} (Y_{k}) \subset U_{0}.
\]
Moreover, the definition of the functions $f_{m,k}$ implies that 
\[
\supp (f_{m,k} \circ h^{-d_{k}(m)}) \subset h^{d(m)} \left( 
h^{s_{k}(m)}(Y_{k}) \setminus \ts{\bigcup_{i=0}^{M} h^{r(i)}(V_{i})} 
\right),
\]
so that in particular, for $k \in Q$ the set $\supp (f_{m,k} \circ 
h^{-d_{k}(m)})$ is a subset of $h^{t_{k}(m)}(Y_{k}^{(0)})$ (which is 
non-empty by the choice of $k$). Since the sets $h^{t_{k}(m)} 
(Y_{k}^{(0)})$ are pairwise disjoint, the sets $\supp (f_{j}^{(0)} 
\circ h^{-d(j)})$ are pairwise disjoint for all choices of $j$ where 
$f_{j}^{(0)} \in \set{ f_{m,k} \colon k \in Q, 0 \leq m \leq 
L_{k}}$. Moreover, the sets $W_{i}$ are pairwise disjoint from 
the sets $h^{t_{k}(m)}(Y_{k}^{(0)})$, as $U \setminus 
\overline{U}_{0}$ is certainly disjoint from $U_{0}$. Therefore, the 
sets $\supp (f_{j}^{(0)} \circ h^{-d(j)})$ are pairwise disjoint 
subsets of $U$ for all $0 \leq j \leq K$. It follows that the sets 
$\supp (f_{j} \circ h^{-d(j)})$ are pairwise disjoint subsets of $U$ 
for all $0 \leq j \leq K$. This completes the proof.
\end{proof}

\begin{cor}\label{DCPforFinDim}
Let $(X,h)$ be a minimal dynamical system, where $\dim (X) < 
\infty$. Then $(X,h)$ has the dynamic comparison property.
\end{cor}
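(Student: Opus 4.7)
The plan is essentially to chain together two results already established in the paper. Since Corollary \ref{DCPforFinDim} combines the finite-dimensional hypothesis with the dynamic comparison property, and the paper has developed the topological small boundary property as an intermediate notion precisely to bridge these two, the proof should be a two-line application of prior results.

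First I would invoke Proposition \ref{FinDimTSBP}, which asserts that any minimal dynamical system $(X,h)$ on a finite-dimensional compact metrizable space has the topological small boundary property. This gives the intermediate regularity condition from the hypothesis on $\dim(X)$ alone, with no additional work needed. Then I would apply Theorem \ref{TopSmallDCP} directly, which concludes from the topological small boundary property that $(X,h)$ has the dynamic comparison property.

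Since all of the substantive content (the Rokhlin tower construction compatible with partitions, the ergodic averaging estimate of Lemma \ref{PosiErgodicAvgs}, the thinness of tower boundaries, and the decomposition property of Lemma \ref{LeftoverCoverSpace}) has already been carried out in Theorem \ref{TopSmallDCP}, there is no genuine obstacle here. The role of this corollary is just to advertise that the finite-dimensional case, which is the setting of the main classification results referenced in the introduction, is captured by the new dynamic comparison property introduced in Section \ref{SectionDCP}. Accordingly, my written proof would be a single sentence citing Proposition \ref{FinDimTSBP} and Theorem \ref{TopSmallDCP} in sequence.
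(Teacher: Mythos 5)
Your proposal is correct and matches the paper's proof exactly: cite Proposition \ref{FinDimTSBP} to obtain the topological small boundary property from finite-dimensionality, then apply Theorem \ref{TopSmallDCP} to conclude the dynamic comparison property. Nothing more is needed.
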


\begin{proof}
By Proposition \ref{FinDimTSBP}, $(X,h)$ has the topological small 
boundary property. Theorem \ref{TopSmallDCP} then implies 
$(X,h)$ has the dynamic comparison property.
\end{proof}

In \cite{Buck1} and \cite{Buck2}, the dynamic comparison property will 
be used to study properties of crossed product $C^{*}$-algebras 
$C^{*}(\Z,C(X,A),\bt)$, where $\bt$ is an automorphism whose 
restriction to $C(X)$ is the action induced by a minimal 
homeomorphism. In particular, it plays a key role in showing that given 
a nonzero positive element of $C(X,A)$, there is a non-zero positive 
element of $C(X)$ which is Cuntz subequivalent to it. Taylor Hines has 
suggested that the dynamic comparison property may be a topological 
analogue for strict comparison of positive elements. For this to be 
reasonable, something like the following should hold:

\begin{cnj}\label{DCPimpliesSCPE}
Let $(X,h)$ be as in Notation \ref{MinDynSys}, and assume that $(X,h)$ 
has the dynamic comparison property. Then the transformation group 
$C^{*}$-algebra $C^{*}(\Z,X,h)$ has strict comparison of positive 
elements.
\end{cnj}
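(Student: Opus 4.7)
The plan is to reduce strict comparison for $A = C^{*}(\Z,X,h)$ to a comparison statement about positive elements of $C(X)$, and then to verify that statement directly from the dynamic comparison property. Tracial states on $A$ correspond via the canonical conditional expectation onto $C(X)$ bijectively with the measures in $M_{h}(X)$, and by the standard reformulation it suffices to prove $(a - \eps)_{+} \cuntz b$ for every $\eps > 0$ and every pair $a, b \in A_{+}$ with $d_{\tau}(a) < d_{\tau}(b)$ for all tracial states $\tau$; matrix amplifications $M_{n}(A)$ are handled by running the same argument inside $M_{n}(A)$.

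The heart of the argument is the special case $a, b \in C(X)_{+}$ with $\int a\,d\mu < \int b\,d\mu$ for every $\mu \in M_{h}(X)$. Given $\eps > 0$, set $C = \set{x \in X \colon a(x) \geq \eps}$, and after a small perturbation via Corollary \ref{MeasApproxZeroBdy} arrange that $\del C$ is universally null. Choose $\dt > 0$ small and produce an open set $U \supset \set{x \colon b(x) \geq \dt}$ with $\del U$ universally null; for $\eps, \dt$ sufficiently small in terms of $\|a\|, \|b\|$, and $\inf_{\mu \in M_{h}(X)} \int (b-a)\,d\mu > 0$, a routine level-set estimate yields $\mu(C) < \mu(U)$ uniformly in $\mu$. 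The dynamic comparison property then produces continuous $f_{j} \colon X \to [0,1]$ and integers $d(j)$ with $\sum_{j} f_{j} = 1$ on $C$ and with the translates $\psi_{j} := f_{j} \circ h^{-d(j)}$ pairwise disjointly supported in $U$. Setting $v_{j} = f_{j}^{1/2} u^{-d(j)} \in A$, one checks $v_{j} v_{j}^{*} = f_{j}$ and $v_{j}^{*} v_{j} = \psi_{j}$, so $[f_{j}] = [\psi_{j}]$ in $\mathrm{Cu}(A)$. Combining the general inequality $[p+q] \leq [p] + [q]$ with orthogonal additivity $\bigl[\sum \psi_{j}\bigr] = \sum [\psi_{j}]$ (valid because the $\psi_{j}$ have pairwise disjoint supports), one chains
\[
\bigl[(a - \eps)_{+}\bigr] \;\leq\; \Bigl[\ts{\sum_{j} f_{j}}\Bigr] \;\leq\; \ts{\sum_{j} [f_{j}]} \;=\; \ts{\sum_{j} [\psi_{j}]} \;=\; \Bigl[\ts{\sum_{j} \psi_{j}}\Bigr] \;\leq\; [b],
\]
where the last inequality uses $b \geq \dt$ on $U \supset \supp\bigl(\sum_{j} \psi_{j}\bigr)$. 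Sending $\eps \to 0$ yields $a \cuntz b$.

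The main obstacle is the reduction from general $a, b \in A_{+}$ to the $C(X)$ case, which is the reason the statement is a conjecture rather than a theorem. The natural approach is Phillips' technology of centrally large subalgebras: for a closed $Y \subset X$ with $\sint (Y) \neq \varnothing$ and $\del Y$ well-behaved, the orbit-breaking subalgebra $A_{Y}$ generated by $C(X)$ together with $C_{0}(X \setminus Y) \cdot u$ is centrally large in $A$, so strict comparison transfers between $A$ and $A_{Y}$; Theorem \ref{RokhlinTower} then presents $A_{Y}$ as a recursive subhomogeneous algebra whose strict comparison reduces block-by-block to the matrix-valued version of the $C(X)$ argument above. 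The difficulty is that the known proofs that $A_{Y}$ is centrally large require Rokhlin towers with \emph{topologically $h$-small} base boundary (as supplied by Lemma \ref{TopSmallRokhlinTower}), and the dynamic comparison property alone does not obviously furnish such $Y$. A complete proof therefore likely requires either strengthening Definition \ref{DCP} — for instance, demanding topologically small boundaries for the sets $C$ and $U$ in its hypothesis — or developing a non-commutative analogue of Lemma \ref{LeftoverCoverSpace} that decomposes positive elements of $A$ directly, bypassing the recursive subhomogeneous framework.
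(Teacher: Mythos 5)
This statement is posed as a conjecture in the paper, and no proof is given, so there is no proof of record against which to compare your attempt; you correctly recognize this, and your diagnosis of why the statement remains a conjecture is accurate.

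Your sketch of the $C(X)$ case follows the intended use of the dynamic comparison property and the standard mechanism $v_{j} = f_{j}^{1/2} u^{-d(j)}$, $v_{j}v_{j}^{*} = f_{j}$, $v_{j}^{*}v_{j} = f_{j}\circ h^{-d(j)}$, but two small slips should be corrected. First, for $a,b \in C(X)_{+}$ the strict comparison hypothesis $d_{\tau}(a) < d_{\tau}(b)$ for all $\tau \in T(A)$ translates to $\mu(\set{a>0}) < \mu(\set{b>0})$ for all $\mu \in M_{h}(X)$, not to $\int a\,d\mu < \int b\,d\mu$; the tracial state $\tau_{\mu}$ restricts on $C(X)$ to integration against $\mu$, but $d_{\tau_{\mu}}$ is the dimension function $d_{\tau_{\mu}}(f) = \mu(\set{f>0})$. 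Second, for the final inequality $\bigl[\sum_{j}\psi_{j}\bigr] \leq [b]$ you need $\supp\bigl(\sum_{j}\psi_{j}\bigr)$ to lie inside $\set{b>0}$, so you should take $U \subset \set{b>0}$ (for instance $U = \set{b > \dt}$ after smoothing the boundary), not an open set $U \supset \set{b \geq \dt}$, which may spill outside $\set{b>0}$.

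Your identification of the genuine obstruction — reducing comparison of arbitrary positive elements of $A$ (and of its matrix amplifications) to the commutative case — is exactly why the statement is a conjecture rather than a theorem, and your appeal to the orbit-breaking subalgebras $A_{Y}$ and Phillips' (centrally) large subalgebra technology matches the program the paper defers to the references [Buck1] and [Buck2]. Your further observation that the known large-subalgebra arguments rely on Rokhlin towers whose base $Y$ has topologically $h$-small boundary (supplied by Lemma \ref{TopSmallRokhlinTower} under the topological small boundary property), while the dynamic comparison property alone does not obviously produce such bases, is a fair and concrete statement of what is still missing, and your suggested remedies — strengthening Definition \ref{DCP} to require topologically small boundaries for $C$ and $U$, or finding a noncommutative analogue of Lemma \ref{LeftoverCoverSpace} that decomposes positive elements of $A$ directly — are both reasonable directions.
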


This result would be independent of the proof in \cite{TomsWinter} that 
$C^{*}(\Z,X,h)$ is $\JS$-stable for finite-dimensional $X$, since the 
dynamic comparison property holds for infinite-dimensional systems 
such as that in Corollary \ref{HinesExample}. It would also verify a 
conjecture of Toms and Phillips (namely, that the radius of comparison 
for $C(X)$ is approximately half the mean dimension of $X$) for all 
systems with the dynamic comparison property (including all those 
with the topological small boundary property).

\end{document}